\newtheorem{definition}{Definition}
\newtheorem{theorem}{Theorem}[section]
\newtheorem{proposition}[theorem]{Proposition}
\newtheorem{lemma}[theorem]{Lemma}
\newtheorem{remark}[theorem]{Remark}
\newcommand{\N}{{\mathbb N}}
\newcommand{\R}{{\mathbb R}}
\def\Om{\Omega}
\def\ra{{\rightarrow}}
\def\na{\nabla}
\def\l{{\lambda}}
\def\a{{\alpha}}
\def\S{{\mathcal{S}}}
\def\D{\Delta}
\def\e{\varepsilon}
\def\de{\partial}
\begin{document}
\title[Symmetry for cooperative elliptic systems]
{Symmetry results for cooperative elliptic systems in unbounded domains}

\author[Damascelli]{Lucio Damascelli}
\address{ Dipartimento di Matematica, Universit\`a  di Roma
" Tor Vergata " - Via della Ricerca Scientifica 1 - 00173  Roma - Italy.}
\email{damascel@mat.uniroma2.it}
\author[Gladiali]{Francesca  Gladiali}
\address{Dipartimento Polcoming-Matematica e Fisica, Universit\`a  di Sassari  - Via Piandanna 4, 07100 Sassari - Italy.}
\email{fgladiali@uniss.it}
\author[Pacella]{Filomena Pacella}
\address{Dipartimento di Matematica, Universit\`a di Roma La Sapienza -  P.le A. Moro 2 - 00185 Roma - Italy.}
\email{pacella@mat.uniroma1.it}
\date{}
\thanks{Supported by PRIN-2009-WRJ3W7 grant}
\subjclass [2010] {35B06,35B50,35J47,35G60}
\keywords{Cooperative elliptic systems, Symmetry,
Maximum Principle,  Morse index}

\begin{abstract}
In this paper we prove symmetry results for classical solutions of semilinear cooperative elliptic systems in $\R^N$, $N\geq 2$ or in the exterior of a ball.
We consider the case of fully coupled systems and nonlinearities which are either convex or have a convex derivative.\\
The solutions are shown to be foliated Schwarz symmetric if a bound on their Morse index holds.
As a consequence of the symmetry results we also obtain some nonexistence theorems.
\end{abstract}
\maketitle

\section{  \textbf{Introduction and statement of the results} }
\label{se:1}
In this paper we study symmetry properties of classical $C^2$ solutions of a semilinear elliptic system of the type
\begin{equation}\label{1}
-\D U=F(|x|,U)\quad  \hbox{ in }\Omega
\end{equation}
where   $\Omega$ is either $\R^N$ or the exterior of a ball (i.e. $\Omega=\R^N\setminus B$ where $B$ is the unit ball centered at the origin), $ N \geq 2$, and
 $F = (f_1 ,\dots, f_m )\,:\,[0, + \infty ) \times\R^m\longrightarrow \R^m$, $m\geq 2$,    is locally a  $C^ {1,\alpha}$ function.
  In the second case   we also require the boundary conditions 
\begin{equation}\label{2}
U=0\quad \hbox{ on }\de \Omega.
\end{equation}
When $\Omega=\R^N$ some radial symmetry results for positive solutions of \eqref{1} have been obtained using the classical moving plane method under further assumptions on $F$ and/or on the decay of the solutions at infinity (\cite{deFY},\cite{BS}).\\
As far as we know there are no results in the case when the solution changes sign or the underlying domain is the exterior of a ball.\\
Here we use the approach introduced in \cite{PA}, \cite{PAW}, \cite{GPW} (see also \cite{PR}) in the scalar case, i.e. when \eqref{1} reduces to a single equation, to prove a partial symmetry result for solutions of \eqref{1} and \eqref{2} having low Morse index, assuming some convexity on the nonlinear term $F(|x|,U)$. This approach is not immediately applicable to the case of systems, as explained in \cite{DAPA}. However in \cite{DAPA} and \cite{DGP} using some new ideas and, in particular, considering an auxiliary symmetric system, foliated Schwarz symmetry of solutions is proved in the case of rotationally invariant bounded domains, i.e. balls or annuli.\\
In passing from bounded to unbounded domains new difficulties arise, some of which are peculiar of the vectorial case and do not appear in the scalar case. Therefore, though our strategy is mostly based on combining the approaches of \cite{GPW} (for the scalar equations in unbounded domains) and  of \cite{DAPA} and \cite{DGP} (for systems in bounded domains), we need some new devices, in particular we derive some other sufficient conditions for foliated Schwarz symmetry (see Section \ref{se:3}).\\
To precisely state our results we need a few definitions.
\begin{definition}\label{d-schwarz-symmetry}
Let $\Omega$ be a rotationally symmetric domain in $\R^N$, $N\geq 2$. We say that a continuous vector valued function $U=(u_1 , \dots ,u_m): \Omega \to \R^m$ is foliated Schwarz symmetric if each component $u_i$ is foliated Schwarz symmetric with respect to the same vector $p \in \R^N$. In other words there exists a vector $p \in \R^N$,
$|p|=1$, such that $U(x)$ depends only on $r=|x|$ and $\theta= \arccos \left ( \frac {x}{|x|}\cdot p  \right ) $ and $U$ is (componentwise) nonincreasing in $\theta $.
\end{definition}
\noindent Next we define the Morse index for a solution $U$ of \eqref{1} and \eqref{2}. To this aim we denote  by $Q_U(-,\Omega)$ the quadratic form corresponding to a solution, i.e.
\begin{equation}\label{forma-quadratica-intr}
Q_U(\psi,\Omega):=\int_{\Omega}\Big[\sum_{i=1}^m|\na \psi_i|^2-\sum_{i,j=1}^m \frac{\de f_i}{\de u_j}(|x|,U)\psi_j\psi_i\Big]\, dx
\end{equation}
where $\psi=(\psi_1,\dots,\psi_m)\in C^1_c(\Omega,\R^m)$, i.e. is 
compactly  supported  in $\Omega$. Throughout the paper we will frequently use the fact that $Q_U$ is also well defined on functions in $H^1_0(\Omega,\R^m)$ which vanish outside a compact set.\\
\begin{definition}\label{d-morse-index}
Let $U$ be a $C^2(\Omega,\R^m)$ solution of \eqref{1} and \eqref{2}. We say that
\begin{itemize}
\item $U$ is linearized stable, or it has zero Morse index, if $Q_U(\psi,\Omega)\geq 0$ for any $\psi \in C^1_c(\Omega,\R^m)$;
\item $U$ has (linearized) Morse index equal to the integer $\mu=\mu(U)\geq 1$, if $\mu$ is the maximal dimension of a subspace $X\subset C^1_c(\Omega,\R^m)$ such that
$Q_U(\psi,\Omega)<0$
for any $\psi \in X\setminus\{0\}.$
\end{itemize}
\end{definition}
\begin{remark}\rm 
This definition of linearized stability, given through the quadratic form, implies in bounded domains that the principal eigenvalue of the linearized operator is nonnegative. This is proved in \cite{DAPA} (see Proposition 2.7 (iv)) by considering a symmetrized system, as defined in Section 2 below.
\end{remark}
\vspace{10pt}
\noindent Finally we recall some coupling conditions for systems.\\
\begin{definition}\label{d-sist-cooperativo}
\begin{itemize}
\item We say that the system \eqref{1} is cooperative or weakly coupled in an open set $D \subseteq \Omega $  if
$$  \frac {\de f _i} {\de u _j} (|x|, u_1, \dots , u_m) \geq 0 \quad \text{ for any } \; (x,u_1,\dots ,u_m ) \in  D  \times  \R^m
$$
for any  $i,j=1,\dots ,m$ with  $ i \neq j$.
\item  We say that the system \eqref{1} is fully  coupled along a solution $U$ in an open set $D \subseteq \Omega $ if  it is cooperative in $D $ and,
 in addition, for any $ I,J \subset \{1, \dots ,m \}$ such that $I \neq \emptyset $, $J \neq \emptyset $, $I \cap J = \emptyset $, $I \cup J = \{1, \dots ,m \} $ there exist $i_0 \in I$, $j_0 \in J $ such that
$$\text{meas }\left(\{ x \in D :   \frac {\de f _{i_0}} {\de u _{j_0}} (|x|, U(x)) > 0 \}\right) >0.$$
 \end{itemize}
\end{definition}
\begin{remark}\rm 
Let us remark that the full coupling condition is very close to the standard notion of irreducibility for systems (see for example  \cite{Am}).
 \end{remark}
\vspace{10pt}
Let $e\in S^{N-1}$ be a direction, i.e. $e\in \R^{N}$, $|e|=1$, and let us define the set
\begin{equation}\label{1.*}
\Omega(e)=\{x\in \Omega\, :\, x\cdot e> 0\}.
\end{equation}
\noindent Our main symmetry results are contained in the following theorems
\begin{theorem}\label{tfconvessa}
Let $U$ be a solution of \eqref{1} and \eqref{2} such that $|\na U|\in L^2(\Omega)$  and Morse index $\mu(U)\leq N$.
Assume that:
\begin{itemize}
\item[i)] The system is fully coupled along $U$ in $\Omega(e)$ for any $e\in S^{N-1}$.
\item[ii)] For any $i,j=1,\dots,m$ $\frac{\de f_i}{\de u_j}(|x|,u_1,\dots,u_m)$ is nondecreasing in each variable $u_k$, $k=1,\dots,m$ for any $x\in \Omega$.
\item[iii)] If $m\geq 3$, for any $i\in \{1,\dots,m\}$, $f_i(|x|,u_1,\dots,u_m)=\sum_{\substack{k=1\\k\neq i}}^m g_{ik}(|x|, u_i,u_k)$ where $g_{ik}\in C^{1,\a}([0,+\infty)\times \R^2)$.
\end{itemize}
Then $U$ is foliated Schwarz symmetric.
\end{theorem}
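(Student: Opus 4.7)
The plan is to use the Morse-index approach of Pacella--Weth, adapted to systems via the devices of \cite{DAPA} and \cite{DGP} and combined with the cut-off techniques from \cite{GPW} needed for unbounded domains. For each direction $e\in S^{N-1}$ I introduce the reflection $\sigma_e(x)=x-2(x\cdot e)e$, set $U^e(x)=U(\sigma_e x)$ and $W_e(x)=U(x)-U^e(x)$ on $\Omega(e)$. Since $F(|x|,\cdot)$ is invariant under $\sigma_e$, the reflected function $U^e$ solves the same system, so the mean value theorem gives $-\Delta W_e = C_e(x) W_e$ on $\Omega(e)$ with
\[
C_e(x)_{ij}=\int_0^1\frac{\partial f_i}{\partial u_j}\bigl(|x|,\,tU(x)+(1-t)U^e(x)\bigr)\,dt.
\]
Hypothesis (i) together with full coupling make $C_e(x)$ a cooperative, irreducible matrix, so the vector-valued strong maximum principle yields the trichotomy: on each connected component of $\Omega(e)$, either $W_e\equiv 0$, $W_e>0$ componentwise, or $W_e<0$ componentwise.

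Denote by $\mathcal{G}\subset S^{N-1}$ the set of directions for which $W_e$ has a fixed sign on $\Omega(e)$. The foliated Schwarz symmetry of $U$ will follow, through one of the sufficient conditions proved in Section~\ref{se:3}, once I show that $\mathcal{G}$ contains a closed hemisphere of $S^{N-1}$; a standard rotation/connectedness argument then produces the axis $p$ of symmetry. I proceed by contradiction: if no such hemisphere exists, I pick $N$ mutually orthogonal unit vectors $e_1,\dots,e_N$ for which $W_{e_k}$ changes sign on $\Omega(e_k)$, and I aim to build $N+1$ linearly independent test functions on which $Q_U<0$, contradicting $\mu(U)\le N$.

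For each such bad direction $e$ I take the componentwise positive part $W_e^+=(W_{e,1}^+,\dots,W_{e,m}^+)$, extended by zero outside $\Omega(e)$, as a candidate test function. A direct computation using the equation satisfied by $W_e$ gives
\[
Q_U(W_e^+,\Omega)=\int_{\Omega(e)}\sum_{i,j}\!\Bigl(C_e(x)_{ij}-\frac{\partial f_i}{\partial u_j}(|x|,U(x))\Bigr)W_{e,i}^+\,W_{e,j}^+\,dx.
\]
On a component where $W_e>0$ one has $tU+(1-t)U^e\le U$ coordinatewise, so the monotonicity hypothesis~(ii) yields $C_e(x)_{ij}\le\partial_{u_j}f_i(|x|,U(x))$ entrywise, and the integrand becomes pointwise nonpositive because $W_e^+$ has nonnegative entries. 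When $m\ge 3$ assumption~(iii) enters decisively: the pairwise structure $f_i=\sum_{k\ne i}g_{ik}(|x|,u_i,u_k)$ is what guarantees that this entrywise control of the $2\times 2$ blocks of $C_e-DF(|x|,U)$ turns into nonpositivity of the full bilinear form, rather than merely of its diagonal. Full coupling promotes the inequality to a strict one, since $W_e^+$ cannot itself solve the linearized system without violating the sign-change of $W_e$. The construction is duplicated on $\Omega(-e)$ using $W_e^-$, and the resulting $2N$ functions, whose linear independence follows from their mutually orthogonal supports, contain an $(N{+}1)$-dimensional subspace of negativity.

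To accommodate the unboundedness of $\Omega$ I multiply each $W_e^\pm$ by a standard cutoff $\chi_R$ of $B_R$ and let $R\to\infty$; the commutator terms are bounded by $\int|W_e|^2|\nabla\chi_R|^2$ and vanish by virtue of $|\nabla U|\in L^2(\Omega)$ (hence $|\nabla W_e|\in L^2$) and standard Hardy-type estimates, so the strict negativity of $Q_U$ persists in the limit. The main obstacle is the vectorial step: ensuring that hypotheses~(ii) and~(iii) really conspire to give pointwise control of the full bilinear form $\sum_{i,j}(C_e-DF)_{ij}W_{e,i}^+W_{e,j}^+$, since in the scalar case only one term appears, whereas for systems one must keep the off-diagonal cross products in hand. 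A secondary difficulty is the topological argument behind Section~\ref{se:3} that converts sign information on $W_e$ over a hemisphere of directions into the foliated Schwarz symmetry of $U$.
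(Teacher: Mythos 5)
Your overall plan diverges substantially from the paper's, and several steps fail as written.

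\textbf{The trichotomy is false.} You assert that the vector-valued strong maximum principle gives: on each component of $\Omega(e)$, either $W_e\equiv 0$, $W_e>0$ componentwise, or $W_e<0$ componentwise. The SMP gives no such trichotomy; it says that \emph{if} $W_e\geq 0$ (componentwise) on a component, then $W_e\equiv 0$ or $W_e>0$ there. In general $W_e$ changes sign, and for systems a single component of $W_e$ can even change sign while another does not. There is no a priori ``good vs.\ bad'' dichotomy on directions.

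\textbf{The negativity computation for $W_e^+$ breaks down exactly where you would need it.} Your mean-value matrix $C_e(x)_{ij}=\int_0^1\partial_{u_j}f_i(|x|,tU(x)+(1-t)U^e(x))\,dt$ integrates along the full straight segment from $U^e(x)$ to $U(x)$. To conclude $C_e(x)_{ij}\le \partial_{u_j}f_i(|x|,U(x))$ from hypothesis (ii) you need $tU(x)+(1-t)U^e(x)\le U(x)$ componentwise, i.e.\ $W_e(x)\ge 0$ \emph{in all components simultaneously}. But $\{W_{e,i}^+>0\}\cap\{W_{e,j}^+>0\}$ only controls components $i$ and $j$; the remaining components may be negative at those points, and then nothing forces $C_{e,ij}\le \partial_{u_j}f_i(|x|,U)$. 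This is precisely why the paper does not use the straight-line matrix when $m\ge 3$: it introduces the ``staircase'' kernel $\widetilde B^e$ (equations \eqref{2.14''}--\eqref{diff-1}), in which $\partial_{u_j}f_i$ is evaluated with $u_1^{\sigma_e},\dots,u_{j-1}^{\sigma_e}$ frozen at their reflected values. Hypothesis (iii) (pairwise structure $f_i=\sum_{k\ne i}g_{ik}(|x|,u_i,u_k)$) is what makes $\partial_{u_j}f_i$ depend only on $u_i,u_j$, so that (ii) gives $\widetilde b_{ij}^e\ge -\partial_{u_j}f_i(|x|,U)$ on $\{u_i\ge u_i^{\sigma_e},\,u_j\ge u_j^{\sigma_e}\}$ (Lemma \ref{l-differenza}(i)). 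Crucially, even with this device, the paper's Lemma \ref{l2.2}(i) still requires $W^e\ge 0$ on the set $D$. Your strategy, applying the estimate to $W_e^+$ on a set where $W_e$ changes sign, is not covered.

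\textbf{The topological bridge is unsupported.} The sufficient conditions in Section \ref{se:3} are not ``a hemisphere of signed directions implies foliated Schwarz symmetry.'' They are: (Lemma \ref{lfss}) signedness of $W^e$ for \emph{every} direction; (Proposition \ref{pf}) a \emph{single} symmetry hyperplane with $Q_U(\cdot,\Omega(e))\ge 0$; or (Proposition \ref{rotating-plane}) a single direction with $W^e$ \emph{strictly} signed together with stability outside a compact set, combined with a rotating-plane argument. Your target of finding a ``closed hemisphere of good directions'' does not plug into any of these, and you give no argument for how it would.

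\textbf{What the paper actually does.} The paper first proves (Proposition \ref{lf}) that under $\mu(U)\le N$ there exists a single direction $e$ for which $Q_U(\cdot,\Omega(e))\ge 0$, via Lemma \ref{ln2} and a Borsuk--Ulam argument on eigenfunctions of $L_U$ restricted to $\Omega\cap B_R$. Then, if $W^e\equiv 0$, Proposition \ref{pf} gives the conclusion; if $W^e\not\equiv 0$, the Cauchy--Schwarz inequality for $P_U$ on $\Omega(e)$ together with Lemma \ref{l2.2}(i) shows $(W^e)^+$ solves the symmetrized linearized system, whence by the strong maximum principle $W^e$ is strictly signed in $\Omega(e)$, and Proposition \ref{rotating-plane} (the rotating-plane argument, which is the technical heart of the unbounded-domain case) concludes. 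Your outline omits Proposition \ref{rotating-plane} entirely, which is where the unboundedness and the interplay between (ii) and (iii) are really handled.

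Finally, a smaller point: your claimed linear independence from ``mutually orthogonal supports'' is shaky for systems, because $W_e^+$ and $W_e^-$ need not have disjoint supports when $W_e$ is vector-valued (different components can have different signs at the same point).
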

\vspace{10pt}
\begin{theorem}\label{tf'convessa}
Let $U$ be a solution of \eqref{1} and \eqref{2} such that $|\na U|\in L^2(\Omega)$ and Morse index $\mu(U)\leq N-1$.
Assume that:
\begin{itemize}
\item[i)] The system is fully coupled along $U$ in $\Omega$.
\item[ii)] For any $i,j=1,\dots,m$ the function $\frac{\de f_i}{\de u_{j}}(|x|,U)$ is convex in $U$:
$$\frac {\de f_i}{\de u_j}(|x|,tU(x_1)+(1-t)U(x_2))\leq t\frac{\de f_i}{\de u_j}(|x|,U(x_1))+(1-t)\frac{\de f_i}{\de u_j}(|x|,U(x_2))$$
for any $x,x_1,x_2\in \Omega$ and for any $t\in [0,1]$.
\end{itemize}
Then $U$ is foliated Schwarz symmetric.
\end{theorem}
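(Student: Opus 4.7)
The strategy is to adapt to systems the reflection and Morse-index approach of \cite{GPW}, using the convexity hypothesis (ii) to estimate a quadratic form and drawing on the vectorial techniques of \cite{DAPA,DGP}, together with the sufficient conditions for foliated Schwarz symmetry derived in Section \ref{se:3}. For each $e\in S^{N-1}$, let $\sigma_e$ be the reflection across $T^e=\{x\cdot e=0\}$, set $U_e(x):=U(\sigma_e(x))$ and $W^e:=U-U_e$ on $\Omega(e)$. Since $F$ depends only on $|x|$, $U_e$ is again a solution and the components of $W^e$ satisfy the linearized cooperative system
\[
-\D w_i^e=\sum_{j=1}^m c_{ij}^e(x)\,w_j^e\quad\text{in }\Omega(e),\qquad c_{ij}^e(x)=\int_0^1\frac{\de f_i}{\de u_j}(|x|,tU+(1-t)U_e)\,dt,
\]
with $w_i^e=0$ on $T^e\cap\overline{\Omega}$ and on $\de\Omega\cap\Omega(e)$; the integrability $|\na U|\in L^2(\Omega)$ ensures that the antisymmetric extension $\tilde W^e$ of $W^e$ across $T^e$ lies in $H^1_0(\Omega,\R^m)$ and is admissible in $Q_U$.

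Testing each equation with $w_i^e$, summing, and changing variables via $y=\sigma_e(x)$ on $-\Omega(e)$ yields the identity
\[
Q_U(\tilde W^e,\Omega)=\int_{\Omega(e)}\sum_{i,j=1}^m\Bigl[2c_{ij}^e(x)-\frac{\de f_i}{\de u_j}(|x|,U)-\frac{\de f_i}{\de u_j}(|x|,U_e)\Bigr]w_i^e w_j^e\,dx.
\]
By hypothesis (ii) each bracket is entry-wise nonpositive. The key step is to promote this pointwise information to the bilinear inequality $Q_U(\tilde W^e,\Omega)\le 0$ for every $e$; this is delicate in the vectorial setting and relies on the full-coupling and cooperative structure (hypothesis (i)) together with the sufficient conditions of Section \ref{se:3}, plausibly via the symmetrized system construction of \cite{DAPA,DGP}.

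Granted $Q_U(\tilde W^e,\Omega)\le 0$ for every $e$, the bound $\mu(U)\le N-1$ is exploited through a Morse-index / topological argument on the family $\{\tilde W^e\}_{e\in S^{N-1}}$ to produce a distinguished direction $p$ with $Q_U(\tilde W^p,\Omega)=0$, so that $\tilde W^p$ belongs to the kernel of the linearized operator. The equality case of the convexity inequality, coupled with the strong maximum principle for fully coupled cooperative systems, then forces $W^p\equiv 0$ in $\Omega(p)$; thus $U$ is symmetric with respect to $T^p$. The monotonicity in the angular variable required for foliated Schwarz symmetry follows from the sign of $W^e$ for directions $e$ with $e\cdot p>0$, established by the same argument and combined with the sufficient conditions of Section \ref{se:3}.

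The main obstacle is the entry-wise-to-bilinear-form passage: in the scalar case $w^e w^e=(w^e)^2\ge 0$ makes the step trivial, but for $m\ge 2$ the cross terms $w_i^e w_j^e$ ($i\ne j$) can be negative and an entry-wise nonpositive matrix need not be negative semidefinite, so the scalar argument does not transfer mechanically. The new sufficient conditions of Section \ref{se:3} are precisely the device that circumvents this vectorial obstruction. A secondary difficulty is the topological extraction of the distinguished direction $p$: the map $e\mapsto \tilde W^e$ is actually even in $e$ (since $\sigma_{-e}=\sigma_e$ gives $\tilde W^{-e}=\tilde W^e$), so Borsuk--Ulam does not apply verbatim and one must argue on $\R P^{N-1}$ or via a finer Lusternik--Schnirelmann-type counting compatible with the Morse index $\mu(U)\le N-1$ in the unbounded domain, where the hypothesis $|\na U|\in L^2(\Omega)$ is exactly what keeps the test functions admissible.
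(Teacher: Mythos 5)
Your high-level outline is in the right neighborhood (reflection differences $W^e$, a topological argument exploiting $\mu(U)\le N-1$, and the Section~\ref{se:3} sufficient conditions), and you correctly identify the two critical obstructions. But the proposed resolutions are missing or incorrect, so there are genuine gaps.

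\textbf{The pointwise-to-quadratic passage is not resolved.} Your identity $Q_U(\tilde W^e,\Omega)=\int_{\Omega(e)}\sum_{i,j}[2c_{ij}^e-\partial_{u_j}f_i(\cdot,U)-\partial_{u_j}f_i(\cdot,U_e)]w_i^e w_j^e\,dx$ is correct, and you are right that entry-wise nonpositivity of the bracket does not give $Q_U(\tilde W^e,\Omega)\le 0$ because cross terms $w_i^e w_j^e$ can be negative. But you then defer to ``the sufficient conditions of Section~\ref{se:3}'' as the fix, whereas in fact the fix is different and specific: the paper \emph{never} bounds $Q_U(\tilde W^e,\Omega)$ (equivalently $Q_{es}(W^e,\Omega(e))$). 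Instead it tests with $(W^e)^+$ and $(W^e)^-$ separately (Lemma~\ref{l2.2}~ii)). Splitting $w_j=w_j^++w_j^-$, the term $\sum_{i,j}(b_{ij}^{es}-b_{ij}^e)w_j^+w_i^+$ is $\le 0$ by convexity because $w_j^+w_i^+\ge 0$, while the surviving off-diagonal term $-\sum_{i\ne j}b_{ij}^e\,w_j^-w_i^+$ is $\le 0$ precisely because cooperativity gives $b_{ij}^e\le 0$ and $w_j^-w_i^+\le 0$. It is this cancellation of problematic cross terms through taking positive/negative parts — not Propositions~\ref{proposition-A}--\ref{pf} — that makes the vectorial step go through.

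\textbf{The topological step would fail as you have set it up, and the object you produce is the wrong one.} You observe that $e\mapsto \tilde W^e$ is even, which does block Borsuk--Ulam; but you don't repair this. The paper's device (Proposition~\ref{p8}) uses an entirely different family: the first positive eigenfunction $g_e$ of the symmetrized operator $L^{es}$ in $\Omega(e)\cap B_R$, extended oddly across $T(e)$ to $\tilde g_e$. Because $B^{e,s}$ is invariant under $\sigma_e$ and $\Omega(-e)=\sigma_e(\Omega(e))$, one has $\tilde g_{-e}=-\tilde g_e$, so $h(e)=\bigl((\tilde g_e,\Phi_1)_{L^2},\dots,(\tilde g_e,\Phi_j)_{L^2}\bigr)$ is genuinely odd and, with $j\le N-1$, Borsuk--Ulam applies. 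Moreover the output of this argument is a direction $e$ for which $Q_{es}(\cdot,\Omega(e))\ge 0$ \emph{on all of} $C_c^1(\Omega(e),\R^m)$ — not the much weaker (and less useful) statement that $Q_U(\tilde W^p,\Omega)=0$ for one direction $p$. The positive-semidefiniteness of $Q_{es}$ on $\Omega(e)$ is essential because it is what makes $P_{es}$ a scalar product and allows the Cauchy--Schwarz argument to conclude that $(W^e)^+$ solves the symmetrized system.

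\textbf{The deduction ``$Q_U(\tilde W^p,\Omega)=0 \Rightarrow \tilde W^p$ is in the kernel'' is unjustified} because $Q_U$ on $\Omega$ is not positive semidefinite (Morse index can be up to $N-1$), so a single vanishing value of the quadratic form says nothing about criticality. More importantly, even after the corrected topological step one does not directly obtain $W^e\equiv 0$: the dichotomy is that either $W^e\equiv 0$ (and then Proposition~\ref{pf} applies), or $(W^e)^+$ solves \eqref{sist-inf} and the strong maximum principle gives $W^e$ a strict sign in $\Omega(e)$; in the latter case you still need a rotating-plane argument (Proposition~\ref{rotating-plane}, which uses the stability outside a compact set and, again, Lemma~\ref{l2.2}~ii)) to produce a symmetry hyperplane, and then Propositions~\ref{proposition-A} and~\ref{proposition-B} together with Lemma~\ref{lfss} to get the angular monotonicity. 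Your outline compresses all of this into ``the monotonicity follows from the sign of $W^e$,'' omitting the nontrivial rotating-plane step entirely.
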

\vspace{10pt}
\noindent Theorem \ref{tfconvessa} extends the main result of \cite{DAPA} to unbounded radial domains, while Theorem \ref{tf'convessa} extends the symmetry result of \cite{DGP} to the same unbounded domains.\\[.2cm]
\begin{remark}\rm 
We remark that the bound $\mu(U)\leq N$ is necessary for the foliated Schwarz symmetry to hold. This can be seen, for example, in the scalar case and for bounded domains, considering the $(N+2)$th eigenfunction $w$ of the laplacian in $H^1_0(\Omega)$, if $\Omega $ is the ball in $\R^N$. Indeed 
$\mu(w) = N+1$ and $w$ is not foliated Schwarz symmetric. \\
Let us also note that our results do not require solutions to be bounded and neither to belong to $H^1_0(\Omega)$, but only that $|\na U|\in L^2(\Omega)$.
\end{remark}
\vspace{10pt}

\noindent The proofs of Theorem \ref{tfconvessa} and \ref{tf'convessa} are technically quite complicated. However we want to point out that a crucial point is to have suitable sufficient conditions for the Schwarz symmetry, namely the ones contained in Section \ref{se:3}, in particular Proposition \ref{rotating-plane}.\\
Moreover, in the case of Theorem \ref{tfconvessa} and \ref{tf'convessa}, to bypass the difficulty of dealing with a nonselfadjoint linearized operator we use, as in \cite{DAPA}, the linear operator associated with the symmetric part of the jacobian matrix $J_F$ of $F$ which is selfadjoint and to which the same quadratic form \eqref{forma-quadratica-intr} corresponds.\\[.2cm]
\noindent Note that if the system is of gradient type, i.e. $F=\mathit{grad} (g)$, for some scalar function $g$ (see \cite{deF2}) then the quadratic form 
corresponds to that generated by the second derivative of a suitable associated functional and hence the linearized operator is selfadjoint. However this is not the case for many interesting systems as the so-called ``hamiltonian-systems''(\cite{deF2},\cite{DAPA}).\\[.2cm]
\noindent The two above symmetry theorems can be applied to different kind of systems and solutions. In the first one, the hypothesis $ii)$ which implies that each $f_i$ is convex with respect to each variable $u_j$, $i,j=1,\dots,m$, seems, in some cases, suitable for positive solutions. Moreover Theorem \ref{tfconvessa} applies to solutions with Morse index up to the dimension $N$ and, for $m\geq 3$, requires an additional hypothesis.\\
\noindent Instead Theorem \ref{tf'convessa} applies more generally to sign changing solutions and does not need extra-assumptions for $m\geq 3$. On the contrary the hypothesis on the Morse index is more restrictive since it requires $\mu(U)\leq N-1$.\\[.2cm]
\noindent As a consequence of the proofs of the symmetry results we derive a necessary condition to be satisfied by a solution which could be used to establish some nonexistence results
\begin{theorem} \label{corollario1}
Assume $U$ is a nonradial solution of \eqref{1} and \eqref{2} and either
\begin{itemize}
\item[a)] $U$ has Morse index one and satisfies the assumptions of Theorem \ref{tfconvessa} or of Theorem \ref{tf'convessa};\\
or
\item[b)] the assumptions of Theorem  \ref{tf'convessa} are satisfied and there exist $i_0, j_0\in \{1,\dots,m\}$ such that $\frac{\de f_{i_0}}{\de u_{j_0}}(|x|,S)$ satisfies the following strict convexity assumption:
\begin{equation}  \label{strictconvexity}
\frac {\de f_{i_0}} {\de u_{j_0}}(|x|,t  S'+(1-t) S'') <  t \frac {\de f_{i_0}} {\de u_{j_0}}(|x|, S')+(1-t)\frac {\de f_{i_0}} {\de u_{j_0}}(|x|, S'')
  \end{equation}
for any $t \in (0,1) $, whenever    $x \in \Omega $ and  $S'=(s'_1,\dots,s'_m)$, $ S''=(s''_1,\dots,s''_m) \in \R^m$  satisfy $s'_k \neq s'' _k $ for any $k \in \{1, \dots ,m \}$.
\end{itemize}
Then necessarily
\begin{equation} \label{superfullycoupling1}
 \sum_{j=1}^m  \frac {\de f _i}{\de u _j}(r, U(r, \theta)) \frac {\de u_j } {\de \theta  }(r, \theta )=
 \sum_{j=1}^m  \frac {\de f _j}{\de u _i}(r, U(r, \theta))   \frac {\de u_j } {\de \theta  }(r, \theta )
\end{equation}
for any    $i=1,\dots ,m $, with $(r, \theta)$ as in Definition \ref{d-schwarz-symmetry}.
In particular,  if $m=2$, from \eqref{superfullycoupling1} we derive that
 \begin{equation} \label{superfullycoupling2}
 \frac {\de f _1}{\de u _2}(|x|, U(x))=  \frac {\de f _2}{\de u _1}(|x|, U(x)) \; , \quad \text{for any }x \in \Omega \;.
 \end{equation}
 \end{theorem}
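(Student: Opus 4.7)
Since (a) or (b) implies the hypotheses of Theorem~\ref{tfconvessa} or \ref{tf'convessa}, the solution $U$ is foliated Schwarz symmetric about some axis $p\in S^{N-1}$. Fix a direction $e\perp p$, let $X$ denote the Killing vector field generating rotations in the $(p,e)$-plane, and let $\sigma$ be the orthogonal reflection in the hyperplane $\{x\cdot e=0\}$. Because $U$ is foliated Schwarz symmetric about $p$, $\sigma$ is a symmetry of $U$, so the angular derivative $V:=\partial_X U$ is nontrivial (as $U$ is nonradial) and $\sigma$-antisymmetric: a short computation using $\sigma(X(x))=-X(\sigma x)$ and $U\circ\sigma=U$ yields $V(\sigma x)=-V(x)$. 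Since $X$ commutes with $-\Delta$ and annihilates $|x|$, differentiating \eqref{1} along $X$ gives the linearized system $-\Delta V=J_F(|x|,U)\,V$ in $\Omega$ with $V=0$ on $\partial\Omega$. Writing $J_F=A+B$ with $A$ symmetric and $B$ antisymmetric, and setting $L_S:=-\Delta-A$, this reads $L_SV=BV$, so the identity \eqref{superfullycoupling1} is exactly $BV\equiv 0$.

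The key point is to exploit the $\sigma$-symmetry of $L_S$. Since $A(|x|,U)$ is $\sigma$-invariant, $L_S$ splits the test-function space into $\sigma$-symmetric and $\sigma$-antisymmetric subspaces. On any Dirichlet ball $B_R$ the principal eigenfunction of $L_S$ is componentwise strictly positive (Krein--Rutman applied to the cooperative, fully coupled operator $L_S$) and hence $\sigma$-symmetric. In case (a), Morse index one means that for $R$ large the unique negative Dirichlet eigenvalue of $L_S$ on $B_R$ lives in the $\sigma$-symmetric sector, so $Q_U\geq 0$ on the $\sigma$-antisymmetric subspace of $H^1_0(B_R)$. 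Using $V\cdot BV\equiv 0$ pointwise together with integration by parts, one checks that, for any smooth $\sigma$-invariant cutoff $\chi_R$ supported in $B_R$,
\[
Q_U(V\chi_R,\Omega)=\int_\Omega |V|^2|\nabla\chi_R|^2\,dx\longrightarrow 0\quad\text{as }R\to\infty.
\]
Since $V\chi_R$ belongs to the $\sigma$-antisymmetric subspace on which $Q_U\geq 0$, a Dirichlet exhaustion argument, using the $L^2$-control afforded by $|\nabla U|\in L^2(\Omega)$, forces $V$ in the limit into the kernel of $L_S$, i.e.\ $L_SV\equiv 0$, and therefore $BV\equiv 0$.

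For case (b), I would argue by contradiction: the proof of Theorem~\ref{tf'convessa} builds $N-1$ linearly independent test directions (rotational derivatives along Killing fields perpendicular to $p$) which saturate the Morse bound; if \eqref{superfullycoupling1} failed on a set of positive measure, the strict convexity hypothesis \eqref{strictconvexity} would allow one to produce an extra direction on which $Q_U$ is strictly negative, linearly independent from the previous $N-1$, giving $\mu(U)\geq N$ and contradicting $\mu(U)\leq N-1$. Finally, for $m=2$ the antisymmetric matrix $B$ has a single independent entry $\partial f_1/\partial u_2-\partial f_2/\partial u_1$, so $BV\equiv 0$ with $V\not\equiv 0$ gives \eqref{superfullycoupling2} at points where $V$ has a nonvanishing component, and continuity of the partial derivatives together with full coupling extends the equality to all of $\Omega$.

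The main obstacle I expect is the spectral limit in case (a): passing rigorously from the cutoff identity $Q_U(V\chi_R)\to 0$ and the nonnegativity of $Q_U$ on the $\sigma$-antisymmetric subspace to the pointwise conclusion $L_SV\equiv 0$ in the unbounded domain requires a careful Dirichlet exhaustion together with compactness for the principal eigenspace of $L_S$ on $B_R$ as $R\to\infty$; in case (b), constructing the extra negative direction from the strict convexity in a way compatible with the rotational directions that already saturate the Morse bound in Theorem~\ref{tf'convessa} will require delicate bookkeeping.
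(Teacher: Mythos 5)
Your reduction to the identity $BV\equiv 0$, with $V$ the angular derivative and $B$ the antisymmetric part of the Jacobian, is indeed what \eqref{superfullycoupling1} asserts, and for case (a) your overall plan is close to the paper's.  Two observations on (a).  First, the ``spectral limit'' you flag as a worry is not actually the obstacle: once you have a symmetry direction $e$ with $Q_U(\cdot,\Omega(e))\ge 0$ on $C^1_c(\Omega(e),\R^m)$ (which the paper obtains by combining Morse index one with the symmetry $\Omega(e)\leftrightarrow\Omega(-e)$ across $T(e)$), one simply invokes the Cauchy--Schwarz inequality for the associated bilinear form $P_U$ together with the cutoff estimate, exactly as in the proof of Proposition~\ref{pf}, to conclude that the relevant function is a weak solution of the \emph{symmetrized} linearized system on $\Omega(e)$ --- no delicate exhaustion of principal eigenspaces is needed.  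Second, and more substantively, the paper applies this argument to $U_\theta^+$ rather than to $U_\theta$: the cooperativity is used to throw away the cross terms $(U_\theta^j)^-v_R^i$, the strong maximum principle (fully coupled symmetrized system) then yields that $U_\theta$ has a \emph{sign} in $\Omega(e)$, and from $U_\theta=U_\theta^\pm$ solving both the original and the symmetrized linearized systems one reads off $(J_F-J_F^t)U_\theta\equiv 0$.  Your variant, which tests directly with $V\chi_R$ (no positive part), also produces $L_SV\equiv 0$, but then you lose the sign information that makes the $m=2$ conclusion immediate: you need $U_\theta$ componentwise nonvanishing a.e.\ to pass from $(\de_{u_2}f_1-\de_{u_1}f_2)\,U_\theta^i=0$ to \eqref{superfullycoupling2}, and you would have to supply a separate argument (unique continuation plus full coupling) for that.

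Your case (b) is where a genuine gap sits.  The proof of Theorem~\ref{tf'convessa} does \emph{not} build $N-1$ rotational test directions that saturate the Morse bound; it relies on Proposition~\ref{p8} (a Borsuk--Ulam argument with first symmetric eigenfunctions of $L^{es}$ on half-balls) to produce a direction $e$ with $Q_{es}(\cdot,\Omega(e))\ge 0$, and then shows $W^e\equiv 0$ or $W^e>0$.  Moreover, hypothesis (b) allows any Morse index up to $N-1$, so ``saturation'' is not available.  The paper's argument is of a completely different nature: it rules out the $W^e>0$ alternative using strict convexity directly.  Concretely, if $W^e>0$ then $W^e=(W^e)^+$ solves both the linear system with coefficients $b_{ij}^e$ (from the mean-value form of the nonlinearity) and the symmetrized system with coefficients $b_{ij}^{es}$; multiplying by $w_i\xi_R$, integrating, and subtracting gives
\[
\sum_{i,j}\int_{\Omega(e)}\bigl(b_{ij}^e(x)-b_{ij}^{es}(x)\bigr)w_j w_i\xi_R\,dx=0,
\]
and since $b_{ij}^e\ge b_{ij}^{es}$ (Lemma~\ref{l-differenza}) and $w_iw_j\xi_R>0$, this forces $b_{ij}^e\equiv b_{ij}^{es}$ --- contradicting strict inequality at $(i_0,j_0)$ from \eqref{strictconvexity} when $W^e>0$.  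Hence $W^e\equiv 0$, so $Q_U=Q_{es}\ge 0$ on $\Omega(e)$, and the proof proceeds as in case (a).  Your proposed contradiction via an ``extra negative direction'' would need a wholly new construction and does not follow from what the proofs of the symmetry theorems actually provide.
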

\vspace{10pt}
\noindent The symmetry results of the previous theorems hold, in particular, for stable solutions of \eqref{1} and \eqref{2}. However for these solutions (as in the scalar case) we easily obtain the radial symmetry without requiring any hypothesis on the nonlinearity. Moreover, if $\Omega=\R^N$ in the autonomous case we get that stable solutions must be constant. 

\begin{theorem}\label{soluzioni-stabili}
Every linearized stable solution of \eqref{1} and \eqref{2} such that $|\na U|\in L^2(\Omega)$ is radial. If, in addition, $\Omega=\R^N$ and $F=F(U)$, i.e. $F$ does not depend on $|x|$, then $U$ is constant.
\end{theorem}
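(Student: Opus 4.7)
The plan is to exploit the rotational invariance of $F(|x|,\cdot)$, and, in the autonomous case, the translation invariance, to produce classical solutions of the linearized system on which $Q_U$ vanishes in the limit, and then to combine the stability hypothesis with a Picone/quotient rigidity to force these solutions to vanish identically.

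Fix a pair of indices $1\le k<l\le N$ and set $W:=\tau_{kl}U$ with $\tau_{kl}=x_k\de_{x_l}-x_l\de_{x_k}$. Because $F(|x|,\cdot)$ is rotationally invariant, applying $\tau_{kl}$ to \eqref{1} gives the linearized system $-\D W_i=\sum_{j}\frac{\de f_i}{\de u_j}(|x|,U)W_j$ in $\Om$, and $\tau_{kl}$ is tangent to $\de B$ so $W=0$ on $\de\Om$ in the exterior-of-ball case. Multiplying this system by $\eta^2 W_i$, summing over $i$, and integrating by parts yields the clean identity
\[
Q_U(\eta W,\Om)=\int_\Om|\na\eta|^2\,|W|^2\,dx
\]
for every Lipschitz cutoff $\eta$ with compact support in $\overline{\Om}$. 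Combining $|W|\le\sqrt{2}\,|x|\,|\na U|$ with a standard cutoff $\eta_R$ satisfying $\eta_R\equiv 1$ on $B_R$, $\eta_R\equiv 0$ outside $B_{2R}$, $|\na\eta_R|\le C/R$, the right-hand side is bounded by $C\int_{R\le|x|\le 2R}|\na U|^2\,dx\to 0$ since $|\na U|\in L^2(\Om)$, while the left-hand side is $\ge 0$ by stability. Hence $Q_U(\eta_R W,\Om)\to 0$.

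The heart of the proof is then to deduce $W\equiv 0$ from this. The plan is a Picone/quotient argument: writing $Q_U(\psi)=\int|\na\psi|^2-A^s(|x|)\psi\cdot\psi$ with $A^s:=\tfrac12(J_F+J_F^T)$, the stability hypothesis means the selfadjoint operator $-\D I-A^s$ is nonnegative on compactly supported vector fields, so a system version of Allegretto--Piepenbrink produces a componentwise positive $\Phi$ solving $-\D\Phi=A^s\Phi$. Setting $v_i:=W_i/\Phi_i$ and combining the equations satisfied by $W_i$ and $\Phi_i$, the vectorial Picone identity together with the same cutoff $\eta_R$ yields $\na v\equiv 0$, so each $W_i=c_i\Phi_i$ for a constant $c_i$. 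But $W$ vanishes on the codimension-two subspace $\{x_k=x_l=0\}$ while $\Phi>0$ there, forcing $c_i=0$; hence $W\equiv 0$. Letting $(k,l)$ range over all pairs shows that $U$ depends on $x$ only through $|x|$.

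For the second assertion, assume $\Om=\R^N$ and $F=F(U)$. Differentiating \eqref{1} in $x_k$ shows each $\de_{x_k}U$ also solves the linearized system, with the sharper pointwise bound $|\de_{x_k}U|\le|\na U|\in L^2(\R^N)$. The same energy identity with the same cutoff now gives
\[
Q_U(\eta_R\de_{x_k}U,\R^N)\le \frac{C}{R^2}\int_{R\le|x|\le 2R}|\na U|^2\,dx\to 0,
\]
and the rigidity step again forces $\de_{x_k}U\equiv 0$ for every $k$, i.e.\ $U$ is constant. The main obstacle is the rigidity step: in the scalar case one simply exploits simplicity of the principal eigenvalue and the sign of the first eigenfunction, but for systems one has to adapt Allegretto--Piepenbrink and Picone to the cooperative vector setting and justify the passage to the limit at infinity using only $|\na U|\in L^2$ rather than any integrability of $U$ itself.
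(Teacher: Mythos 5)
Your starting point matches the paper's (apply the infinitesimal rotation $\tau_{kl}$ to get a solution $W$ of the linearized system and use $|\na U|\in L^2$ and cutoffs to obtain $Q_U(\eta_R W,\Om)\to 0$), but the rigidity step you propose is genuinely different and, as written, has a gap that would derail the proof. The paper applies the Cauchy--Schwarz argument of Proposition \ref{pf} to the \emph{positive part} $(U_\theta)^+$, uses the cooperativity of the system to control the cross terms $\frac{\de f_i}{\de u_j}\,(U_\theta^j)^- (U_\theta^i)^+$, concludes via the strong maximum principle that each $U_\theta^i$ has one sign, and then uses $2\pi$-periodicity with $\int_0^{2\pi} U_\theta\,d\theta=0$ to force $U_\theta\equiv 0$. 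For the constancy claim the paper observes that every translate $U(\cdot+t)$ is also stable, hence radial, hence $U$ is constant.

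The main gap in your argument is the invocation of a ``system version of Allegretto--Piepenbrink.'' Nonnegativity of the quadratic form associated with $-\D I - A^s$ does \emph{not}, in general, produce a componentwise positive solution $\Phi$. Already for $m=2$ with
$A^s=\begin{pmatrix} V & -W\\ -W & V\end{pmatrix}$, $W>0$, the quadratic form is nonnegative as soon as $-\D-(V\pm W)\ge 0$ scalarly, yet the ground-state direction is $(1,-1)$, which changes sign; there is no componentwise positive ground state. The vectorial Allegretto--Piepenbrink result you need requires the symmetrized coupling matrix to have nonnegative off-diagonal entries (cooperativity), and typically full coupling to get strict positivity of all components, neither of which you state or verify. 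The same cooperativity is needed to make the vectorial Picone decomposition
$Q_U(\psi)=\sum_i\int\Phi_i^2|\na(\psi_i/\Phi_i)|^2+\tfrac12\sum_{i\ne j}\int a^s_{ij}\Phi_i\Phi_j\bigl(\psi_i/\Phi_i-\psi_j/\Phi_j\bigr)^2$
a sum of nonnegative terms, a point you leave implicit. (The paper's own proof also tacitly relies on cooperativity, but it does so through the much softer device of the positive-part trick and the strong maximum principle, without needing existence of a global positive ground state on an unbounded domain.) Two further, smaller issues: for $N=2$ and $\Om=\R^2\setminus B$, the set $\{x_k=x_l=0\}=\{0\}$ lies outside $\Om$, so your vanishing argument has nothing to grip, whereas the periodicity/zero-average argument works uniformly; and in the constancy step the rigidity only gives $\de_{x_k}U_i=c_k^i\Phi_i$, so you still need a vanishing locus for $\de_{x_k}U$ --- it exists ($\{x_k=0\}$, using the radiality just proved) but you do not point to it, and the paper's translation argument bypasses this entirely.
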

\begin{remark}\rm 
This result is analogous to the one for scalar equations obtained in \cite{GPW}. We observe that our definition of linearized stability is stronger than the one used in \cite{FG} to get a nonexistence result for H\'enon-Lane-Emden systems.
\end{remark}
\noindent Finally, as corollary of the symmetry theorems we get some nonexistence results, analogous to those obtained in the scalar case (see \cite{GPW}), but under the stronger assumptions of Theorems \ref{corollario1}.
\begin{theorem}\label{tnonesistenza1}
Assume that one among  assumptions $a)$ and $b)$   of Theorem \ref{corollario1} holds.
If $\Omega=\R^N$ and $F=F(U)$, i.e. $F$ does not depend on  $|x|$, then there are no sign changing solutions of \eqref{1} such that
$$ U(x) \to 0 \quad \text{ as }|x|\to +\infty.$$
\end{theorem}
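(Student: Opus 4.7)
The plan is a contradiction argument that exploits the translation invariance of the autonomous equation to apply the foliated Schwarz symmetry results to every translate of $U$. Suppose, to the contrary, that there exists a sign-changing solution $U$ of \eqref{1} with $U(x)\to 0$ as $|x|\to\infty$. At least one component, say $u_{i_0}$, is sign-changing; combined with continuity and decay at infinity this forces $u_{i_0}$ to attain a strictly positive global maximum $M=u_{i_0}(x^+)$ at a point $x^+\in\R^N$ and a strictly negative global minimum $-m=u_{i_0}(x^-)$ at a point $x^-\in\R^N$, with $x^+\neq x^-$.

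Because $F$ does not depend on $|x|$, for every $y\in\R^N$ the translate $U_y(x):=U(x+y)$ is again a solution of \eqref{1}, satisfies $|\na U_y|\in L^2(\R^N)$ and $U_y\to 0$ at infinity, and a change of variables gives $Q_{U_y}(\psi,\R^N)=Q_U(\psi(\cdot-y),\R^N)$ for every $\psi\in C^1_c(\R^N,\R^m)$, so $U_y$ has the same Morse index as $U$. Therefore $U_y$ inherits the hypotheses of Theorem \ref{tfconvessa} (under a)) or of Theorem \ref{tf'convessa} (under b)), and is foliated Schwarz symmetric about the origin. Translating back shows that $U$ itself is foliated Schwarz symmetric about every point of $\R^N$.

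To extract the contradiction, fix any $z\in\R^N\setminus\{x^+,x^-\}$ and let $p_z\in S^{N-1}$ be the pole of the foliated Schwarz symmetry of $U$ around $z$. Monotonicity in the angular variable implies that the maximum of $u_{i_0}$ on the sphere $\{|x-z|=|x^+-z|\}$ is attained at the north pole $z+|x^+-z|\,p_z$; since $x^+$ lies on this sphere and realizes the global maximum, the north pole must also be a global maximizer of $u_{i_0}$. The analogous analysis on the sphere $\{|x-z|=|x^--z|\}$ gives that the south pole $z-|x^--z|\,p_z$ is a global minimizer. In the generic case in which the global extrema are uniquely attained, this yields
\[
p_z \;=\; \frac{x^+-z}{|x^+-z|}\;=\;\frac{z-x^-}{|z-x^-|},
\]
forcing $z$ to lie on the open segment joining $x^-$ to $x^+$; since $z\in\R^N$ was arbitrary and $N\geq 2$, this is absurd.

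The main technical obstacle I expect is the degenerate case where the global extrema of $u_{i_0}$ are attained on nontrivial sets, so that $p_z$ is not pinned down by the identities above. The intended resolution is that such degeneracy, combined with foliated Schwarz symmetry about every point of $\R^N$, forces $U$ to be radial about some fixed point; the translation invariance established above then propagates the radial symmetry to every point of $\R^N$, making $U$ constant, and the decay $U\to 0$ yields $U\equiv 0$, contradicting the sign-changing hypothesis.
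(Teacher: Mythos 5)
Your approach is genuinely different from the paper's, and unfortunately it has gaps that I don't think are easy to fill.

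The paper's proof does not touch translation invariance at all. It uses the conclusion extracted in the proof of Theorem~\ref{corollario1}, namely that there is a direction $e$ such that $U$ is symmetric with respect to $T(e)$ \emph{and} $Q_U(\cdot,\Omega(e))\geq 0$. With that quadratic-form positivity in hand it runs the by-now-standard Cauchy--Schwarz argument on $\partial_{x_1}U$ to show $(\partial_{x_1}U)^+$ solves the symmetrized linearized system on $\Omega(e_1)$, applies the Strong Maximum Principle to conclude $\partial_{x_1}U$ does not change sign in the half-space, and then the decay $U\to 0$ plus sign change gives the contradiction directly. Your plan bypasses the quadratic-form information entirely and tries to get a contradiction from the geometry of foliated Schwarz symmetry about every base point. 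Two genuine problems arise.

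First, you assert that every translate $U_y$ satisfies the hypotheses of Theorem~\ref{tfconvessa} because $F$ is autonomous and the Morse index is translation invariant. The Morse index step is fine, and under Theorem~\ref{tf'convessa} the full coupling is required on all of $\Omega=\R^N$, which is translation invariant. But under Theorem~\ref{tfconvessa} the hypothesis is full coupling along $U$ in $\Omega(e)$ for every $e\in S^{N-1}$, and for $U_y$ this becomes full coupling along $U$ in the shifted half-space $\{x: x\cdot e > y\cdot e\}$. When $y\cdot e>0$ this is a strictly smaller half-space, and the positive-measure set witnessing coupling for $U$ in $\Omega(e)$ could well be lost. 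Since for any $y\neq 0$ there are directions $e$ with $y\cdot e>0$, the hypothesis of Theorem~\ref{tfconvessa} is not automatically inherited by $U_y$; this needs either an additional argument or a restriction to hypothesis~$b)$.

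Second, you explicitly flag the degenerate case (nonunique global extrema of $u_{i_0}$) as the ``main technical obstacle'' and only sketch an ``intended resolution.'' The claim that such degeneracy, together with foliated Schwarz symmetry about every point, forces $U$ to be radial about a fixed point is not established, and it is not obvious: nonuniqueness of the maximizer does not by itself rule out a nontrivial axis, and the step from ``radial about one point'' to ``radial about every point, hence constant'' via translation invariance is asserted rather than proved. As written, this is a gap, not a proof. A further warning sign: your argument never uses the Morse-index-one restriction of~$a)$ or the strict convexity of~$b)$; if it went through as stated it would prove the nonexistence result under the hypotheses of Theorems~\ref{tfconvessa}/\ref{tf'convessa} alone, which is strictly stronger than what the paper claims (compare the extra gradient-type hypothesis the paper needs in Theorem~\ref{tnonesistenza3} to drop~$a)$/$b)$). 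That the paper's authors could not avoid $a)$/$b)$ should make you suspicious that the geometric shortcut has a hidden hole, and indeed it does.
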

\begin{theorem}\label{tnonesistenza2}
Assume that one between  assumptions $a)$ and $b)$   of Theorem \ref{corollario1} is satisfied.
If $\Omega=\R^N\setminus B$ and $F=F(U)$, i.e. $F$ does not depend on  $|x|$, then there are no positive solutions of \eqref{1} \eqref{2} such that
$$ U(x) \to 0 \quad \text{ as }|x|\to +\infty.$$
\end{theorem}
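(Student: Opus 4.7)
The strategy is a proof by contradiction: suppose $U$ is a positive solution of \eqref{1}--\eqref{2} with $U(x)\to 0$ as $|x|\to +\infty$. Under either assumption $a)$ or $b)$ of Theorem \ref{corollario1}, all the hypotheses of Theorem \ref{tfconvessa} or Theorem \ref{tf'convessa} are in force, so I first invoke the corresponding symmetry theorem to conclude that $U$ is foliated Schwarz symmetric with respect to some axis $p\in S^{N-1}$. Writing $U(x)=U(r,\theta)$ with $\theta=\arccos(x\cdot p/|x|)$, one has $U_\theta\leq 0$ and, by Theorem \ref{corollario1}, either $U$ is already radial or the identity \eqref{superfullycoupling1} holds along $U$.

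The first substantial step is to upgrade foliated Schwarz symmetry to full \emph{radial} symmetry. Using the autonomy of $F$ and the invariance of $\Omega=\R^N\setminus B$ under every rotation of $\R^N$, I would run the rotating-plane procedure of Section \ref{se:3} (in particular Proposition \ref{rotating-plane}) in every direction $e\in S^{N-1}$. The reflection hyperplane is initialised at infinity, exploiting the decay $U\to 0$ exactly in the way the scalar argument of \cite{GPW} does; the identity \eqref{superfullycoupling1} from Theorem \ref{corollario1} lets us treat the (generically non-selfadjoint) linearized system via the symmetric part of $J_F$ as in \cite{DAPA}. Obtaining mirror-symmetry of $U$ across every hyperplane through the origin, combined with the $O(N-1)$-invariance already supplied by foliated Schwarz symmetry with respect to $p$, forces full $O(N)$-invariance, i.e.\ $U$ is radial.

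Once $U=U(r)$ is radial, the system reduces to $-u_i''-\tfrac{N-1}{r}u_i'=f_i(U)$ on $(1,\infty)$, with $u_i(1)=0$, $u_i>0$ and $u_i(r)\to 0$. Testing the $i$-th equation against $r^N u_i'$, summing in $i$ and integrating on $(1,R)$ yields a vectorial Pohozaev-type identity. The decay together with the hypothesis $|\nabla U|\in L^2(\Omega)$ kills the contribution at $R\to\infty$ along a suitable sequence, while the boundary term at $r=1$ reduces to $-\tfrac{1}{2}\sum_i (u_i'(1))^2$, which is strictly negative by the Hopf lemma applied to the cooperative system. The convexity of $\partial f_i/\partial u_j$ (under $b)$), or the constraint $\mu(U)=1$ (under $a)$), together with the autonomy of $F$, controls the sign of the remaining volume integral and produces the contradiction.

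The principal obstacle is the upgrade from foliated Schwarz to radial symmetry: implementing the rotating-plane procedure on \emph{unbounded} caps for a system with a generically non-selfadjoint linearization. This is precisely the motivation for Proposition \ref{rotating-plane}, which combines the small-cap / decay techniques of \cite{GPW} with the symmetric-part-of-$J_F$ device of \cite{DAPA}; making these ingredients interact on the exterior of a ball is the technically most delicate part. A secondary difficulty is to single out the correct Pohozaev multiplier in the radial step, since in the absence of a scalar potential one must use the convexity of $\partial f_i/\partial u_j$ jointly with the identity \eqref{superfullycoupling1} to recover the required sign.
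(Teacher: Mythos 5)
Your proposal diverges substantially from the paper's argument, and the two steps you outline both have serious problems.

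First, you try to upgrade foliated Schwarz symmetry to full radial symmetry by running a moving-plane method ``initialized at infinity.'' This is not available here: the paper's Proposition~\ref{rotating-plane} is a \emph{rotating}-plane method (it varies the direction $e$ through the origin, not the position of the hyperplane), and it requires as input a direction for which $W^e$ already has a strict sign in all of $\Omega(e)$, which is delivered by the Morse-index / quadratic-form analysis of Section~\ref{se:4}, not by a translating-plane scheme driven by decay. Moreover the theorem does not need, and the paper never proves, that $U$ is radial; trying to establish radiality is doing more work than the conclusion requires and would in fact need additional hypotheses (uniform decay estimates, control of the non-selfadjoint part along sweeping planes, etc.) that are not in place for systems.

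Second, the paper does not use a Pohozaev identity at all, and your description of that step (``controls the sign of the remaining volume integral'') is not substantiated: for non-gradient systems there is no reason the Pohozaev volume term has a sign, and neither the Morse-index bound nor the convexity of $\de f_i/\de u_j$ obviously supplies one. The actual proof is a monotonicity argument in the exterior domain: from the proof of Theorem~\ref{corollario1} one extracts a symmetry hyperplane $T(e_1)$ together with $Q_U(\cdot,\Omega(e_1))\geq 0$; the derivative $\de U/\de x_1$ then solves the linearized system \eqref{ult} with $\de U/\de x_1=0$ on $T(e_1)\cap\overline\Omega$ and $\de U/\de x_1\geq0$ on $\de B\cap\overline{\Omega(e_1)}$ (tangential derivatives vanish on $\de B$, so $\de U/\de x_1=(\na U\cdot\nu)\,x_1\geq0$ there by $U>0$). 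Testing the linearized system with $(\de U/\de x_1)^-\xi_R^2$, using cooperativity, the cutoff estimate, and the Cauchy--Schwarz machinery of Proposition~\ref{pf}, one finds $(\de U/\de x_1)^-$ solves the symmetrized system with zero Dirichlet data and hence vanishes by the strong maximum principle. Thus $\de U/\de x_1\geq0$ throughout $\Omega(e_1)$, which is incompatible with $U=0$ on $\de B$, $U>0$ in $\Omega$, and $U(x)\to 0$ at infinity. So the core mechanism is sign-constancy of a directional derivative, not a Pohozaev computation, and the contradiction is with the two vanishing boundary conditions (inner boundary and infinity), not with a sign condition in an integral identity.
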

\noindent For systems of gradient type the previous results can be improved in the sense that they hold under the assumptions of Theorem \ref{tfconvessa} or Theorem \ref{tf'convessa}.
\begin{theorem}\label{tnonesistenza3}
Assume that either the  assumptions  of Theorem \ref{tfconvessa} or of Theorem \ref{tf'convessa} hold. If the system \eqref{1} is of gradient type and $F$ does not depend on  $|x|$,
then there are neither sign changing solutions of \eqref{1} in $\R^N$ nor positive solutions of \eqref{1} and \eqref{2} in $\R^N\setminus B$ such that
$$ U(x) \to 0 \quad \text{ as }|x|\to +\infty.$$
\end{theorem}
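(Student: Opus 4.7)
The plan is to combine three ingredients: the foliated Schwarz symmetry from Theorem~\ref{tfconvessa} or Theorem~\ref{tf'convessa}, the selfadjointness of the linearised operator granted by the gradient hypothesis, and a Pohozaev-type nonexistence for the reduced radial problem. Let $U$ be a candidate solution, either sign-changing on $\R^N$ or positive on $\R^N\setminus B$, with $U\to 0$ at infinity and $|\nabla U|\in L^2(\Omega)$. The hypotheses of the relevant symmetry theorem are satisfied, so $U$ is foliated Schwarz symmetric with respect to some $p\in S^{N-1}$. The goal is to show that such a $U$ must in fact be radial, and then to rule it out.

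The gradient hypothesis $F=\nabla g$ makes $J_F(U)=D^2 g(U)$ pointwise symmetric, so $L_U:=-\Delta-J_F(U)$ is selfadjoint with associated quadratic form $Q_U$. Autonomy of $F$ together with the rotational invariance of $\Omega$ implies that for every $q\in S^{N-1}$ orthogonal to $p$ the infinitesimal rotation generator
\[
V_q(x)\;=\;(q\cdot x)(p\cdot\nabla)U(x)-(p\cdot x)(q\cdot\nabla)U(x)
\]
is a classical solution of $L_U V_q=0$. A direct computation gives $V_q\circ\sigma_q=-V_q$, where $\sigma_q$ is the reflection across $\{q\cdot x=0\}$, and the foliated Schwarz monotonicity in the angle $\theta$ forces $V_q$ to be componentwise sign-definite on each of the two half-spaces produced by this reflection. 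If $U$ is not radial, then $V_q\not\equiv 0$ for some $q\perp p$; by the full coupling hypothesis and the strong maximum principle, the sign-definite restriction of $V_q$ to $\Omega(q)=\{x\in\Omega:q\cdot x>0\}$ is a principal Dirichlet eigenfunction of $L_U$ on $\Omega(q)$ with eigenvalue $0$. Choosing $N-1$ orthonormal directions $q_1,\dots,q_{N-1}\perp p$ yields $N-1$ such half-spaces; assembling the associated principal eigenfunctions (truncated by suitable cut-offs thanks to $|\nabla U|\in L^2$ and $U\to 0$) together with a maximal $Q_U$-negative subspace realising the Morse index produces a subspace of $H^1_0(\Omega,\R^m)$ whose dimension is strictly larger than $\mu(U)$ and on which $Q_U$ is strictly negative after a small perturbation, contradicting the bound $\mu(U)\leq N$ (resp.\ $\leq N-1$). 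Hence $V_q\equiv 0$ for every $q\perp p$, so $U$ is radial.

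With $U=U(r)$ radial and satisfying $-U''-\tfrac{N-1}{r}U'=\nabla g(U)$ with $U(r)\to 0$, a Pohozaev-type identity obtained by multiplying the equation by $x\cdot\nabla U$ and integrating (boundary terms at infinity vanish thanks to $|\nabla U|\in L^2$ and the decay) gives $\tfrac{N-2}{2}\int|\nabla U|^2=N\int g(U)\,dx$, which combined with $\int|\nabla U|^2=\int U\cdot\nabla g(U)\,dx$ and the convexity-type hypotheses of Theorem~\ref{tfconvessa} or Theorem~\ref{tf'convessa} forces $U\equiv 0$, contradicting nontriviality. In the $\R^N\setminus B$ case the boundary $\partial B$ contributes an additional term that is sign-definite by Hopf's lemma (applied to the positive solution with $U=0$ on $\partial B$), which rules out the radial candidate as well.

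The main obstacle lies in the second paragraph, namely extracting a genuine contradiction to the Morse index bound from the family $\{V_q\}_{q\perp p}$ of rotational kernel elements. The formal identity $L_U V_q=0$ does not immediately produce strict $Q_U$-negative directions, since the $V_q$ lie only in $\ker L_U$; one must therefore cut them off using the decay of $U$, combine them with the negative eigenspace, and perform a careful selfadjoint perturbation to convert the kernel directions into strictly negative ones without destroying the sign. Controlling the cross terms in this assembly is where the gradient structure (selfadjointness), the full coupling condition (strong maximum principle), and the hypotheses $|\nabla U|\in L^2$ and $U\to 0$ all enter essentially, and is precisely what makes the strict convexity or Morse index one assumptions of Theorems~\ref{tnonesistenza1},~\ref{tnonesistenza2} unnecessary here.
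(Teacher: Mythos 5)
Your route is genuinely different from the paper's, and unfortunately it has two gaps that are not mere technicalities.

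First, the attempt to force radiality from the Morse index bound does not close. Each $V_q$ lies in $\ker L_U$ (on the whole of $\Omega$, or on the half-space $\Omega(q)$ with Dirichlet conditions by the foliated Schwarz antisymmetry), so the best one gets from cutting it off is $\limsup_{R\to\infty} Q_U(\xi_R V_q,\Omega(q))\le 0$, never a strictly negative value. To contradict $\mu(U)\le N$ (resp.\ $\le N-1$) you would need $N+1$ (resp.\ $N$) directions on which $Q_U$ is strictly negative, and the ``careful selfadjoint perturbation'' you invoke to turn kernel directions into negative directions is exactly the missing step; it is not at all clear it can be done, since a perturbation that lowers $Q_U$ on $V_q$ typically leaves $C^1_c(\Omega(q),\R^m)$ or destroys orthogonality to the existing negative subspace. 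Moreover radiality is not the correct intermediate conclusion to aim for: the theorem asserts nonexistence, and in general a foliated Schwarz symmetric solution of this kind has no reason to be radial.

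Second, the Pohozaev step is not available under the stated hypotheses. The convexity/monotonicity assumptions in Theorems~\ref{tfconvessa} and \ref{tf'convessa} concern $\partial f_i/\partial u_j$ and carry no sign information about $g$ or $\int g(U)$; the identity $\frac{N-2}{2}\int|\nabla U|^2=N\int g(U)$ does not force $U\equiv0$ without additional structure, and the vanishing of the boundary terms at infinity for a general radial solution with only $|\nabla U|\in L^2$ and $U\to0$ would also need justification.

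The paper's argument avoids both difficulties. The key point is a generalization of Lemma~2.1 of \cite{GPW}: for gradient-type systems, if $W^e$ is strictly signed in $\Omega(e)$ (so it is a positive solution of the linearized equation there), then $Q_U(\psi,\Omega(e))\ge0$ for all $\psi\in C^1_c(\Omega(e),\R^m)$. Combined with the rotating plane construction of Proposition~\ref{rotating-plane}, this produces a direction $\tilde e$ for which $U$ is symmetric across $T(\tilde e)$ \emph{and} $Q_U(\cdot,\Omega(\tilde e))\ge0$; in other words \eqref{4.1} holds, without the extra assumptions $a)$ or $b)$ of Theorem~\ref{corollario1}. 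Once \eqref{4.1} is in hand, the proofs of Theorems~\ref{tnonesistenza1} and \ref{tnonesistenza2} go through verbatim: one applies the Cauchy--Schwarz trick with the bilinear form $P_U$ to the directional derivative $\partial U/\partial x_1$, concludes via the strong maximum principle that it cannot change sign in $\Omega(\tilde e)$, and contradicts the sign change (or positivity plus zero boundary data) together with the decay $U\to0$. This is the step your proposal needs and does not supply.
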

We observe that the case of the system of gradient type is easier. Indeed for this kind of  systems the existence of a positive solution of the linearized equation ensures, as in Lemma 2.1 of \cite{GPW}, the positivity of the quadratic form associated to it, and, in some sense, the validity of the maximum principle. This is not true anymore for systems which are not of gradient type. In the case of bounded domains (see \cite{DAPA} and \cite{DGP}), to overcome this difficulty we used the principal eigenvalue which would not help for unbounded domains, while here we use a new sufficient condition for the foliated Schwartz symmetry.\\   
\noindent We conclude with a few remarks on the range of applicability of our theorems.\\
All our results require some information on the Morse index of the solution. If the system is of gradient type, as recalled, the quadratic form corresponds to that generated by the second derivative of a suitable associated functional. So, often, variational methods, used to find solutions, also carry information on the Morse index (see \cite{deF2}),(see  \cite{MMP}, for an example). A standard case is given by solutions obtained by the Mountain Pass theorem.\\
\noindent If the system is not of this type it could happen that the second derivative of functionals associated to the variational formulation of the system are strongly indefinite.\\
As explained and showed in \cite{DAPA} this does not mean that solutions do not have finite (linearized) Morse index. We believe that, in general, more investigation about stability properties of solutions of systems should be done.\\
We also think that the result of Theorem \ref{corollario1} is interesting and could give some new understanding of systems which have or do not have solutions and are not of gradient type.\\[.5cm]
\noindent The outline of the paper is the following. 
In Section \ref{se:2} we state or prove some preliminary results while in Section \ref{se:3} we show sufficient conditions for the foliated Schwarz symmetry. In Section \ref{se:4} we give the proofs of the two symmetry results Theorem  \ref{tfconvessa} and Theorem \ref{tf'convessa}. Finally Section \ref{se:5} is devoted to the remaining theorems.

\section{\textbf{Notations and preliminary results}}\label{se:2}
\noindent We fix some general notation. Throughout the paper, $B_R$ denotes the ball in $\R^N$ with radius $R>0$ centered at the origin. 
If $D$ is a domain, we denote by $C^1_c(D,\R^m)$ the space of all $C^1$-functions from $D$ to $\R^m$ compactly supported in
$D$.\\[.20pt]
\noindent We start by recalling some statements about linear systems.\\
\noindent Let $D$ be any smooth domain of $\R^N$ and let $A(x)$ be an $m\times m$ matrix defined in $D$, i.e. $A(x)=\left( a_{ij}(x)\right)_{i,j=1}^m$ and $a_{ij}(x)\in L^{\infty}_{loc}(D)$. We consider the linear elliptic system
\begin{equation}\nonumber
\left\{\begin{array}{ll}
-\Delta \psi+A(x)\psi=0 &\text{ in }D\\
\psi=0&\text{ on }\de D
\end{array}
\right.
\end{equation}
i.e the system
\begin{equation}\label{linear-sistem}
\left\{\begin{array}{ll}
-\Delta \psi_1+\sum_{j=1}^m a_{1j}(x)\psi_j=0 &\text{ in } D\\
\dots\dots\\
-\Delta \psi_m+\sum_{j=1}^m a_{mj}(x)\psi_j=0 &\text{ in } D\\
\psi_1=\dots=\psi_m=0&\text{ on }\de D.
\end{array}
\right.
\end{equation}
We say that a function $\psi \in H^1_0(D,\R^m)$ is a weak solution of \eqref{linear-sistem} if
$$\int_D \sum_{i=1}^m \na \psi_i\cdot \na \phi_i+\sum_{i,j=1}^m a_{ij}(x)\psi_j\phi_i\, dx=0$$
for any $\phi\in C^1_c(D,\R^m)$. 
We will denote by $(-,-)_{L^2}$ the scalar product in $L^2(D,\R^m)$, i.e. $(\psi,\phi)^2_{L^2}=\int_D \sum_{i=1}^m \psi_i \phi_i\, dx$ and by $\na \psi \cdot \na \phi=\sum_{i=1}^m\na \psi_i \cdot \na \phi_i $ for any vector valued function $\psi, \phi \in C^1(D,\R^m)$.
We recall the definition of weakly and fully coupled for this kind of systems.
\begin{definition}\label{coupled}
The linear system \eqref{linear-sistem} is said to be
\begin{itemize}
\item {\rm cooperative } or {\rm weakly coupled} in $D$ if $a_{ij}(x)\leq 0$ a.e. in $D$ whenever $i\neq j$;
\item {\rm fully coupled} in $D$ if it is weakly coupled in $D$ and for any $I,J\subset \{1,\dots,m\}$ such that $I,J\neq \emptyset$, $I\cap J=\emptyset$, $I\cup J=\{1,\dots,m\}$ there exists $i_0\in I$ and $j_0\in J$ such that ${\it{meas}}\left(\{x\in D\, :\, a_{i_0j_0}(x)<0\}\right)>0.$
\end{itemize}
\end{definition}
\noindent For any scalar function $g$, we set $g^+= \max\{g,0\}$ and $g^-=
\min\{g,0\}$. Similarly, for any vector-valued function $W=(w_1,\dots,w_m)$ we set $W^+=(w_1^+,\dots,w_m^+)$ and $W^-=(w_1^-,\dots,w_m^-)$. Here and in the sequel, inequalities involving vectors should be understood to hold componentwise, for example $\psi=(\psi_1,\dots,\psi_m)\geq 0$ means $\psi_i\geq 0$ for any index $i=1,\dots,m$.\\
\noindent We recall some known facts about Maximum Principle for systems, see \cite{deF2}, \cite{deFM}, \cite{Si} and reference therein for more details.
\begin{theorem}[Strong Maximum Principle and Hopf's Lemma]\label{SMP}
 Suppose that the linear system \eqref{linear-sistem} is fully coupled in $D$  and $U=(u_1, \dots , u_m) \in C^1 (\overline {D};\R^m)$  is a weak solution of \eqref{linear-sistem}.
If $U \geq  0 $ in $D $, then either $U \equiv 0 $ in $D$ or $U>0 $ in $D $. In the latter case if $P \in \partial D $ and $U(P)=0$ then $\frac {\de U }{\de \nu }(P) < 0$, where $\nu $ is the unit exterior normal vector at $P$.
\end{theorem}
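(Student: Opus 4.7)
The plan is to reduce the vector statement to the scalar Strong Maximum Principle and Hopf Lemma applied componentwise, using the sign of the off-diagonal coefficients to cast each component as a subsolution of a scalar equation, and then using the full coupling to exclude the ``mixed'' scenario in which some components vanish identically while others are strictly positive.

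First I would fix an index $i$ and observe that, since the system is cooperative, $a_{ij}(x)\leq 0$ a.e. for $j\neq i$. Combined with $u_j\geq 0$ this gives, in the weak sense,
\begin{equation}\nonumber
-\Delta u_i+a_{ii}(x)\,u_i=-\sum_{j\neq i}a_{ij}(x)\,u_j\geq 0\quad\text{in }D,\qquad u_i\geq 0,
\end{equation}
where the coefficients are in $L^\infty_{\mathrm{loc}}(D)$. The classical scalar Strong Maximum Principle (applied on an arbitrary ball compactly contained in $D$, with the standard trick of splitting $a_{ii}=a_{ii}^+-a_{ii}^-$ and moving $a_{ii}^+u_i$ to the right-hand side so the resulting operator satisfies the hypotheses) yields the dichotomy: for every $i\in\{1,\dots,m\}$, either $u_i\equiv 0$ in $D$ or $u_i>0$ in $D$.

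Next I would set $I=\{i:u_i\equiv 0\text{ in }D\}$ and $J=\{i:u_i>0\text{ in }D\}$. If $J=\emptyset$ the conclusion $U\equiv 0$ holds, and if $I=\emptyset$ the conclusion $U>0$ holds. Suppose for contradiction both $I$ and $J$ are nonempty; then $I,J$ form a nontrivial partition of $\{1,\dots,m\}$, so by the full coupling hypothesis there exist $i_0\in I$ and $j_0\in J$ with $a_{i_0j_0}<0$ on a set $E\subset D$ of positive measure. Testing the $i_0$-th equation gives
\begin{equation}\nonumber
0=-\Delta u_{i_0}+a_{i_0i_0}(x)\,u_{i_0}=-\sum_{j\neq i_0}a_{i_0j}(x)\,u_j\geq -a_{i_0j_0}(x)\,u_{j_0}\geq 0\quad\text{in }D,
\end{equation}
but the right-hand side is strictly positive on $E$ (where $a_{i_0j_0}<0$ and $u_{j_0}>0$), a contradiction.

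For Hopf's Lemma, assume $U>0$ in $D$ and $U(P)=0$ at some $P\in\partial D$. Then $u_i(P)=0$ and $u_i>0$ in $D$ for each $i$; applying the scalar Hopf Lemma to the inequality $-\Delta u_i+a_{ii}(x)\,u_i\geq 0$ (which requires the standard interior-sphere condition on $\partial D$ at $P$ and the local boundedness of $a_{ii}$, both of which are part of our hypotheses via $\overline D$ smoothness and $a_{ij}\in L^\infty_{\mathrm{loc}}$) yields $\frac{\partial u_i}{\partial\nu}(P)<0$ for every $i$, i.e. $\frac{\partial U}{\partial\nu}(P)<0$ componentwise. The main technical point is step two, where a little care is needed to justify invoking the scalar Strong Maximum Principle from the weak formulation of the vector system with only $L^\infty_{\mathrm{loc}}$ coefficients; everything else is a direct consequence of the decoupling induced by the sign assumption.
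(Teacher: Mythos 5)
Your proof is correct and self-contained. The paper itself does not supply an argument for this statement: it explicitly refers to \cite{DAPA} for both Theorem \ref{SMP} and Theorem \ref{p-di-max-domini-piccoli}, so there is no internal proof to compare against. Your strategy is nonetheless the standard (and, to the best of my knowledge, the same) route for cooperative systems: use the sign conditions $a_{ij}\le 0$ ($i\ne j$) and $u_j\ge 0$, tested against single-component test functions, to see each $u_i$ as a nonnegative weak supersolution of $-\Delta+a_{ii}$; apply the scalar strong maximum principle (splitting $a_{ii}=a_{ii}^+-a_{ii}^-$) to obtain the componentwise dichotomy $u_i\equiv 0$ or $u_i>0$ on the connected domain $D$; set $I=\{i:u_i\equiv 0\}$, $J=\{i:u_i>0\}$ and invoke full coupling for this partition to produce $i_0\in I$, $j_0\in J$ with $a_{i_0j_0}<0$ on a set $E$ of positive measure; and then derive the contradiction from $\sum_{j\ne i_0}a_{i_0j}u_j=0$ a.e.\ (all terms nonpositive, one strictly negative on $E$). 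The Hopf part correctly reduces to the scalar Hopf lemma for each component. The one caveat worth noting is in that last step: $a_{ij}\in L^\infty_{loc}(D)$ gives boundedness only on compact subsets of the \emph{open} set $D$, which is not by itself enough to bound $a_{ii}$ on an interior tangent ball whose closure touches $P\in\partial D$, as the scalar Hopf lemma requires. This is a gap inherited from the theorem's stated hypotheses rather than from your argument (and it is harmless in the paper's applications, where the relevant coefficients are continuous up to the boundary), but the phrase ``which is part of our hypotheses via $\overline D$ smoothness and $a_{ij}\in L^\infty_{loc}$'' overstates what $L^\infty_{loc}(D)$ alone provides.
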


\begin{definition} We say that the maximum principle holds for the operator $- \Delta + A(x)$ in an open set  $D \subseteq \Omega $ if  any $U \in H^1 (D,\R^m)$ such that $U \leq  0 $ on $ \de D $ (i.e. $U^{+} \in H_0^1 (D,\R^m)$ )  and such that $- \Delta U + A(x)U \leq 0 $ in $D $ (i.e.
$\int_{D}\sum_{i=1}^m \nabla u_i \cdot  \nabla \psi_i + \sum_{i,j=1}^m a_{ij}(x) u_j \psi_i \, dx \leq 0 $
for any nonnnegative  $\psi \in H_0^1 (D,\R^m) $ )
 satisfies $U \leq 0 $ a.e. in $D$.
\end{definition}
\begin{theorem}  \label{p-di-max-domini-piccoli}
There exists $\delta >0 $, depending on $A(x)$, such that for any subdomain $D \subseteq \Omega $ the maximum principle holds for the operator $- \Delta + A(x)$  in $D \subseteq \Omega $ provided $\mathit{meas}(D)  \leq \delta$.
\end{theorem}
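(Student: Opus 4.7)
The plan is to run the classical Berestycki--Nirenberg--Varadhan type energy argument, adapted to a cooperative linear system. In the present paper the matrix $A(x)$ that appears in the linearization is (minus) the Jacobian of a cooperative $F$, so $a_{ij}(x)\leq 0$ for $i\neq j$; this sign condition is what will allow me to discard the off-diagonal contributions involving negative parts. Once those are gone, a Sobolev--H\"older estimate on the small set $D$ produces a factor $|D|^{2/N}$ which, for $\mathit{meas}(D)\leq\delta$ sufficiently small, absorbs the zeroth order term into the gradient term.

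Let $U\in H^1(D,\R^m)$ with $U\leq 0$ on $\de D$, i.e.\ $U^+\in H_0^1(D,\R^m)$, satisfy $-\Delta U+A(x)U\leq 0$ weakly. Since $U^+$ is a non-negative admissible test function and $\na u_i\cdot\na u_i^+=|\na u_i^+|^2$ a.e., one obtains
$$\sum_{i=1}^m\int_D|\na u_i^+|^2\,dx\;\leq\;-\sum_{i,j=1}^m\int_D a_{ij}(x)\,u_j\,u_i^+\,dx.$$
On the right-hand side write $u_j=u_j^+ + u_j^-$. The diagonal cross terms $a_{ii}u_i^- u_i^+$ vanish pointwise since $u_i^+u_i^-=0$; for $i\neq j$, cooperativity gives $a_{ij}\leq 0$ and together with $u_j^-\leq 0$, $u_i^+\geq 0$ yields $-a_{ij}u_j^- u_i^+\leq 0$, so these terms may simply be dropped. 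What remains is $-\sum_{i,j}a_{ij}u_j^+ u_i^+$, which by Young's inequality and $|a_{ij}|\leq M:=\|A\|_{L^\infty(D)}$ is bounded above by $mM\sum_{i=1}^m(u_i^+)^2$.

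For $N\geq 3$ the Sobolev embedding gives $\|u_i^+\|_{L^{2^*}(D)}^2\leq C_S\|\na u_i^+\|_{L^2(D)}^2$, and H\"older yields $\|u_i^+\|_{L^2(D)}^2\leq |D|^{2/N}\|u_i^+\|_{L^{2^*}(D)}^2$; for $N=2$ one uses $H_0^1(D)\hookrightarrow L^4(D)$ together with H\"older to obtain the same type of bound, with $|D|^{1/2}$ in place of $|D|^{2/N}$. Inserting this back into the previous inequality gives
$$\sum_{i=1}^m\int_D|\na u_i^+|^2\,dx\;\leq\;mM\,C_S\,|D|^{2/N}\sum_{i=1}^m\int_D|\na u_i^+|^2\,dx.$$
Thus it suffices to take $\delta>0$ so that $mM\,C_S\,\delta^{2/N}<1$ (in dimension two, with exponent $1/2$): for any $D\subseteq\Omega$ with $\mathit{meas}(D)\leq\delta$ the inequality forces $\na U^+\equiv 0$, and combined with $U^+\in H_0^1(D,\R^m)$ (Poincar\'e) this gives $U^+\equiv 0$, i.e.\ $U\leq 0$ a.e.\ in $D$.

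The key obstacle, and the only step where the vector nature of the problem is genuinely delicate, is the second paragraph above: without the cooperative sign condition, the off-diagonal contributions $-\sum_{i\neq j}a_{ij}u_j^- u_i^+$ cannot be discarded, and any naive bound produces a term in $\|U^-\|_{L^2(D)}^2$ which is \emph{not} controlled by the $H_0^1$-energy of $U^+$. It is precisely the structure of the cooperative linearized system (already fixed by the standing assumptions of the paper) that makes these terms disappear with the right sign and allows the scalar Berestycki--Nirenberg--Varadhan scheme to go through componentwise.
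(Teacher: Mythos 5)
Your argument is correct in substance and is the standard ``small measure implies maximum principle'' proof for cooperative systems. One important caveat: the paper itself contains no proof of Theorem \ref{p-di-max-domini-piccoli}; immediately after stating it together with Theorem \ref{SMP}, the authors write that they refer to \cite{DAPA} for a general formulation and a proof. So there is no in-paper proof to compare against, but your scheme is the expected one, and you correctly identify the crucial structural hypothesis: cooperativity $a_{ij}\leq 0$ for $i\neq j$ is what lets you drop the terms $-a_{ij}u_j^-u_i^+\leq 0$. Without it the componentwise scalar argument breaks down, and in fact the statement is simply false for general non-cooperative $A$ -- so it is worth noting that this sign condition, though not written in the theorem statement, is part of the standing hypotheses on the systems considered in the paper.

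There is a small technical slip in the $N=2$ case. Unlike $H_0^1(D)\hookrightarrow L^{2^*}(D)$ for $N\geq 3$, the embedding $H_0^1(D)\hookrightarrow L^4(D)$ does \emph{not} carry a domain-independent constant, so one cannot just quote it and apply H\"older. If instead you use the domain-independent Ladyzhenskaya inequality $\|u\|_{L^4}^2\leq C\|u\|_{L^2}\|\nabla u\|_{L^2}$ and then H\"older, you recover $\|u\|_{L^2(D)}^2\leq C\,|D|\,\|\nabla u\|_{L^2(D)}^2$, i.e.\ the exponent $2/N=1$ rather than the $1/2$ you wrote. A cleaner route that handles every $N\geq 1$ uniformly, with no case distinction, is the Faber--Krahn inequality: $\lambda_1(D)\geq c_N|D|^{-2/N}$, hence
\[
\|u\|_{L^2(D)}^2\leq \lambda_1(D)^{-1}\|\nabla u\|_{L^2(D)}^2\leq c_N^{-1}|D|^{2/N}\|\nabla u\|_{L^2(D)}^2 \qquad \text{for all } u\in H_0^1(D).
\]
Inserting this into your energy inequality gives absorption as soon as $mM\,c_N^{-1}\delta^{2/N}<1$, and the rest of your argument goes through unchanged. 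This is only a cosmetic correction; the proof is otherwise sound.
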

\noindent We refer to \cite{DAPA} for a general formulation and a proof of Theorem \ref{SMP} and Theorem \ref{p-di-max-domini-piccoli}.\\
\noindent Given the linear system \eqref{linear-sistem} in $D\subseteq \Omega$, we can associate with it the quadratic form
\begin{equation}\label{f-quadr}
Q_{A}(\psi,D):= \int_D\sum_{i=1}^m |\na \psi_i|^2+\sum_{i,j=1}^m a_{ij}(x) \psi_i\psi_j\, dx
\end{equation}
for any $\psi\in C^1_c(D,\R^m)$. By density, the quadratic form $Q_A$ is well defined also if $\psi $ is in $H^1_0 (D, \R^m)$ and vanishes a.e. outside a bounded set. \\
\noindent We now consider the (symmetric) linear system defined by:
\begin{equation}\label{linear-simmetric-sistem}
\left\{\begin{array}{ll}
-\Delta \psi_1+\frac 12 \sum_{j=1}^m \left(a_{1j}(x)+a_{j1}(x)\right)\psi_j=0 &\text{ in } D\\
\dots\dots\\
-\Delta \psi_m+\frac 12\sum_{j=1}^m \left(a_{mj}(x)+a_{jm}(x)\right)\psi_j=0 &\text{ in } D\\
\psi_1=\dots=\psi_m=0&\text{ on }\de D,
\end{array}
\right.
\end{equation}
and observe that to \eqref{linear-sistem} and \eqref{linear-simmetric-sistem} corresponds  the same quadratic form \eqref{f-quadr}. Obviously if the matrix $A(x)$ of \eqref{linear-sistem} is symmetric then \eqref{linear-sistem} and \eqref{linear-simmetric-sistem} coincide. Moreover we remark that if \eqref{linear-sistem} is weakly (fully) coupled in $D$ then the same holds for the symmetric system \eqref{linear-simmetric-sistem}.\\
Next we consider the  bilinear symmetric  form associated to \eqref{f-quadr}:
\begin{equation}\label{P_A}
P_A(\psi,\phi, D):=\int_D \sum_{i=1}^m\na \psi_i\cdot \na \phi_i+\frac 12\sum_{i,j=1}^m \left(a_{ij}(x)+a_{ji}(x))\right)\psi_j\phi_i\, dx
\end{equation}
for any $\psi,\phi\in C^1_c(D,\R^m)$ (but also for $\psi,\phi\in H^1_0(D,\R^m)$ which vanish outside a bounded set). 
Note that if the quadratic form \eqref{f-quadr} is positive semidefinite then the bilinear symmetric form \eqref{P_A} defines a scalar product and hence the Cauchy-Schwarz inequality holds, i.e.
$$\left(P_A(\psi,\phi, D)\right)^2\leq P_A(\psi,\psi, D)P_A(\phi,\phi, D)=Q_{A}(\psi,D)Q_{A}(\phi,D)$$
for any $\psi,\phi\in H^1_0(D,\R^m)$ vanishing a.e. outside a bounded set.\\
If $D$ is
 bounded  then the symmetric system \eqref{linear-simmetric-sistem}
has a sequence of Dirichlet eigenvalues $\l_k$ such that $\l_k\to +\infty$ as $k\to +\infty$ and a corresponding sequence of eigenfunctions $Z^k\in H^1_0(D,\R^m)$, i.e. functions that satisfy
\begin{equation}\nonumber
\left\{
\begin{array}{ll}
-\Delta Z^k_i +\sum_{j=1}^m \frac 12 \left( a_{ij}(x)+a_{ji}(x))\right)Z^k_j =\l_kZ^k_i& \hbox{ in } D\\
Z^k=0 &\hbox{ on } \de D.
\end{array}
\right.
\end{equation}
We denote by $\l_k^s(L_A,D)$ these symmetric eigenvalues.\\
The first symmetric eigenfunction, i.e. the function associated with $\l_1^s(L_A,D)$ does not change sign in $D$ and the first eigenvalue is simple, i.e. up to a scalar multiplication there is only one eigenfunction corresponding to $\l_1^s(L_A,D)$.\\
See \cite{DAPA} for a careful study of the properties of these symmetric eigenvalues.
\begin{lemma}\label{ln2}
Let $L_A$ and  $Q_A$ be defined as above. If
\begin{equation}\label{neg}
\inf_{\psi \in C^1_c(\Omega(e),\R^m)}Q_A(\psi,\Omega(e))<0 \quad \text{ for any } e\in S^{N-1}
\end{equation}
with $\Omega(e)$ as in \eqref{1.*}, 
then there exists $\tilde R > 0$ (depending on $A(x)$) such that, for any $e \in S^{N-1}$ and for any $R\geq\tilde{R}$, the first symmetric eigenvalue $\l_1^s( L_A,\Omega (e)\cap B_R )$ of the operator $L_A$ in the
domain $B_R \cap \Omega(e)$ with zero Dirichlet boundary conditions is negative.
\end{lemma}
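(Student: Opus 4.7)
I would combine the variational characterization of the first symmetric eigenvalue with compactness of $S^{N-1}$. Since the symmetric system \eqref{linear-simmetric-sistem} gives rise to the same quadratic form $Q_A$, for any bounded smooth domain $D$ one has the Rayleigh-quotient representation
$$
\l_1^s(L_A, D) = \inf_{\psi \in H^1_0(D,\R^m)\setminus\{0\}} \frac{Q_A(\psi,D)}{\|\psi\|_{L^2(D,\R^m)}^2}.
$$
Hence the lemma will follow if, for every direction $e\in S^{N-1}$ and every $R\geq \tilde R$, we can exhibit a single admissible $\psi$ with $Q_A(\psi, \Omega(e)\cap B_R)<0$.

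The second step is local in $e$. Fix $e_0\in S^{N-1}$ and, using \eqref{neg}, choose $\psi_0\in C^1_c(\Omega(e_0),\R^m)$ with $Q_A(\psi_0,\Omega(e_0))<0$. Let $K=\operatorname{supp}(\psi_0)$, pick $R_0>0$ with $K\subset B_{R_0}$, and set $\d_0:=\min_{x\in K} x\cdot e_0>0$. For any $e\in S^{N-1}$ with $|e-e_0|<\d_0/R_0$ and any $x\in K$,
$$
x\cdot e = x\cdot e_0+x\cdot(e-e_0)\geq \d_0-R_0|e-e_0|>0,
$$
so $K\subset \Omega(e)\cap B_{R_0}$. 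Consequently $\psi_0\in H^1_0(\Omega(e)\cap B_R,\R^m)$ for every such $e$ and every $R\geq R_0$, and since the integrand defining $Q_A(\psi_0,\cdot)$ is supported in $K$ we still have $Q_A(\psi_0,\Omega(e)\cap B_R)=Q_A(\psi_0,\Omega(e_0))<0$. The Rayleigh quotient then yields $\l_1^s(L_A,\Omega(e)\cap B_R)<0$ for all $e$ in an open neighborhood $V_{e_0}$ of $e_0$ and all $R\geq R_0$.

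Finally I would invoke compactness of $S^{N-1}$: from the open cover $\{V_{e_0}\}_{e_0\in S^{N-1}}$ extract a finite subcover $V_{e_1},\dots,V_{e_k}$ with associated radii $R_1,\dots,R_k$, and set $\tilde R:=\max_{i=1,\dots,k} R_i$. Every $e\in S^{N-1}$ lies in some $V_{e_i}$, and for $R\geq\tilde R\geq R_i$ the previous step gives $\l_1^s(L_A,\Omega(e)\cap B_R)<0$, as required. The main obstacle is precisely this uniformity in $e$, circumvented by the elementary observation that a fixed compactly supported test function remains a valid negative-energy test function for all directions sufficiently close to a given one; without it, \eqref{neg} alone would only yield an $R$ depending on $e$. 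The variational characterization of $\l_1^s$ is standard for the symmetric operator $L_A$ and is the one already alluded to in the preceding discussion referring to \cite{DAPA}.
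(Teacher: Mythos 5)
Your proof is correct, and it takes a genuinely different route from the paper. The paper argues by contradiction: it extracts sequences $e_n\to\tilde e$ and $R_n\to\infty$ with $\l_1^s(L_A,\Omega(e_n)\cap B_{R_n})\geq 0$, uses a test function from \eqref{neg} to conclude $\l_1^s(L_A,\Omega(\tilde e)\cap B_{R_n})<0$ for large $n$, and then invokes ``continuity of the first symmetric eigenvalue'' with respect to the direction to transfer this to $e_n$ and reach a contradiction. You instead argue directly via an open cover of $S^{N-1}$: for each $e_0$ you fix one test function $\psi_0$ with negative energy, observe that its compact support remains inside $\Omega(e)\cap B_{R_0}$ for all $e$ close to $e_0$ and all $R\geq R_0$, and then take a finite subcover. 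The payoff is that you avoid entirely the continuity-of-eigenvalues step, which the paper asserts without proof; your elementary observation about the test function is, in effect, the mechanism that would justify that continuity claim. Both arguments hinge on the same core fact (a fixed compactly supported test function from \eqref{neg} persists for nearby directions), but your version is more self-contained and makes the uniformity in $e$ explicit rather than leaving it to an appeal to eigenvalue continuity.
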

\begin{proof}
Arguing by contradiction we assume there exists a sequence of directions $e_n\in S^{N-1}$ and a sequence of radii $R_n\to +\infty$ such that
\begin{equation}\label{negaz-Lu}
\l_1^s (L_A,\Omega(e_n)\cap B_{R_n})\geq 0.
\end{equation}
Up to a subsequence $e_n\to \tilde e\in S^{N-1}$ and it holds
\begin{equation}\label{fxxx}
\inf_{\psi \in C^1_c(\Omega(\tilde{e}),\R^m)}Q_A(\psi,\Omega(\tilde{e}))\geq 0
\end{equation}
contradicting \eqref{neg}. Indeed, if \eqref{fxxx} does not hold there should exist a function  $\psi\in C^1_c(\Omega(\tilde e ),\R^m)$ such that $Q_A(\psi,\Omega(\tilde e))<0$ and this would  imply that
$\l_1^s(L_A, \Omega(\tilde e)\cap B_{R_n})<0$ for  $n$ sufficiently large. The continuity of the first symmetric eigenvalue implies that  $\l_1^s(L_A, \Omega(e_n)\cap B_{R_n})<0$ for $n$ large enough contradicting \eqref{negaz-Lu}.
\end{proof}
\noindent We introduce some more notation. 
For a unit vector  $e\in
S^{N-1}$  we consider the hyperplane   $T(e)=\{x\in\R^N,\, : \,
x\cdot e=0\}$.
We write $\sigma_e \,:\,\Omega\mapsto \Omega$ for the reflection
with respect to $T(e)$, that is $\sigma_e(x)=x-2(x \cdot e)e$ for every
$x\in \Omega$, and denote by $U^{\sigma_e}=(u_1^{\sigma_e},\dots,u_m^{\sigma_e})$ the function $U\circ \sigma_e$. Note that $T(-e)=T(e)$ and
$\Omega(-e)=\sigma_e(\Omega(e))=-\Omega (e) $ for every $e\in
S^{N-1}$. Finally
for a given direction  $e\in S^{N-1}$ let us denote by
$W^e=(w_1,\dots,w_m)$ the difference between $U$ and its reflection with respect to the hyperplane $T(e)$,
i.e.  $W^e(x)=U(x)-U(\sigma_e(x))$.\\
Obviously the function $W^e
$ satisfies the linear system $-\Delta W^e=F(|x|,U)-F(|x|,U^{\sigma_e})$ in $\Omega$ and in $\Omega(e)$. This system can be written as a linear system in many ways.
Indeed we need at least two different formulations of it to deal with the different hypotheses of Theorem \ref{tfconvessa} and Theorem \ref{tf'convessa}. \\
First, it is standard to see that the function $W^e$ satisfies in $\Omega(e)$ and in $\Omega$ the system
\begin{equation}\label{diff-2}
\left\{\begin{array}{ll}
-\Delta W^e+ B^e(x)W^e=0 & \text{ in }\Omega(e)\\
W^e=0 & \text{ on }\de \Omega(e)
\end{array}\right.
\end{equation}
where $B^e(x)=\left( b_{ij}^e(x)\right)_{i,j=1}^m$ with
\begin{equation}\label{2.14'}
b_{ij}^e(x)=-\int_0^1 \frac{\de f_i}{\de u_j}\big( |x|, tU(x)+(1-t)U(\sigma _e(x)\big)\, dt.
\end{equation}
Now, we can write
\begin{align*}
&f_i(|x|,U(x))-f_i(|x|,U(\sigma_e(x)))\\
&=f_i(|x|,u_1(x),\dots,u_m(x))-f_i(|x|,u_1^{\sigma_e}(x),u_2(x),\dots,u_m(x))\\
&+f_i(|x|,u_1^{\sigma_e}(x),u_2(x),\dots,u_m(x))-f_i(|x|,u_1^{\sigma_e}(x),u_2^{\sigma_e}(x),\dots,u_m(x))\\
&\dots\dots\\
&+f_i(|x|,u_1^{\sigma_e}(x),\dots,u_{m-1}^{\sigma_e}(x),u_m(x))-f_i(|x|,u_1^{\sigma_e}(x),\dots,u_m^{\sigma_e}(x))\\
&=-\left(\tilde b_{i1}^e(x)w_1+\dots+\tilde b_{im}^e(x)w_m\right)
\end{align*}
where
\begin{equation}\label{2.14''}
\tilde b_{ij}^e(x)=-\int_0^1 \frac{\de f_i}{\de u_j}\big( |x|, u_1^{\sigma_e},\dots,u_{j-1}^{\sigma_e},tu_j(x)+(1-t)u_j(\sigma _e(x)),u_{j+1},\dots,u_m\big)\, dt.
\end{equation}
This implies that the function $W^e$ satisfies in $\Omega(e)$ and in $\Omega$ the linear system
\begin{equation}\label{diff-1}
\left\{\begin{array}{ll}
-\Delta W^e+\widetilde B^e(x)W^e=0 & \text{ in }\Omega(e)\\
W^e=0 & \text{ on }\de \Omega(e)
\end{array}\right.
\end{equation}
where $\widetilde B^e(x)=\left( \tilde b_{ij}^e(x)\right)_{i,j=1}^m$.\\
We collect some properties of these linear systems in the following lemma.
\begin{lemma}\label{l-differenza}
Let $U$ be a solution of \eqref{1} and \eqref{2} and $e$ any direction, $e\in S^{N-1}$.
\begin{itemize}
\item[i)] Assume that hypotheses $i),ii)$ and $iii)$ of Theorem \ref{tfconvessa} hold. Then, for any $x\in \Omega$, we have
\begin{align}
\label{b_ii} \tilde b_{ii}^e(x)\geq &-\frac {\de f_i}{\de u_i}(|x|,U(x))\quad \text{ if }u_i(x)\geq u_i^{\sigma_e}(x),\\
\label{b_ij} \tilde b_{ij}^e(x)\geq &-\frac {\de f_i}{\de u_j}(|x|,U(x))\quad \text{ if }u_i(x)\geq u_i^{\sigma_e}(x)\, ,\,u_j(x)\geq u_j^{\sigma_e}(x)
\end{align}
in particular, if $u_i(x)= u_i^{\sigma_e}(x)$, $u_j(x)= u_j^{\sigma_e}(x)$ then $\tilde b_{ij}^e(x)=-\frac {\de f_i}{\de u_j}(|x|,U(x))$. Moreover the system \eqref{diff-1} is fully coupled in  $\Omega$ and in $\Omega(e)$.

\item[ii)]If the system \eqref{1} is fully coupled along $U$ in $\Omega$ then the linear system \eqref{diff-2}  is fully coupled in $\Omega$ and in $\Omega(e)$ for any $e\in S^{N-1}$. If also hypothesis $ii)$ of Theorem \ref{tf'convessa} holds, and we let, for any direction $e\in S^{N-1}$
\begin{equation} \label{b-es}
b_{ij}^{e,s}(x)=\frac 12 \left(\frac{\de f_i}{\de u_j}( |x|, U(x))+\frac{\de f_i}{\de u_j}( |x|, U(\sigma_e(x)) )\right)
\end{equation}
then, for any $i,j=1,\dots,m$ and $x\in \Omega$, we have
\begin{equation}\label{b>b-es}
b_{ij}^e(x)\geq b_{ij}^{e,s}(x)
\end{equation}
and the linear system
\begin{equation}\label{equation-b-es}
-\Delta \psi+B^{e,s}(x)\psi=0
\end{equation}
is fully coupled  in $\Omega$ and in $\Omega(e)$ as well for any $e\in S^{N-1}$, where $B^{e,s}(x)=\left( b_{ij}^{e,s}(x)\right)_{i,j=1}^m$. Finally, if $U$ is symmetric with respect to the hyperplane $T(e)$ then $b_{ij}^e(x)= b_{ij}^{e,s}(x)=\frac{\de f_i}{\de u_j}( |x|, U(x))$ for any $i,j=1,\dots,m$.
\end{itemize}
\end{lemma}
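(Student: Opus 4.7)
My approach is to read off all claims directly from the explicit formulas for the coefficients $\tilde b_{ij}^e$ and $b_{ij}^e$, combined with the structural hypotheses (monotonicity, decomposition, convexity, or $\sigma_e$-invariance). For part~(i), the argument of $\partial f_i/\partial u_j$ inside \eqref{2.14''} has coordinates $u_k^{\sigma_e}$ in the first $j{-}1$ slots, the convex combination $\tau u_j+(1-\tau) u_j^{\sigma_e}$ in slot $j$, and $u_k$ in the remaining slots. Hypothesis (ii) of Theorem \ref{tfconvessa} makes each $\partial f_i/\partial u_j$ nondecreasing in every slot, so under the assumed inequalities $u_i\geq u_i^{\sigma_e}$ (and, for the off-diagonal bound, $u_j\geq u_j^{\sigma_e}$) only the slots whose sign of $u_k-u_k^{\sigma_e}$ is known can be controlled. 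For $m=2$ these are the only slots present; for $m\geq 3$ hypothesis (iii) reduces $\partial f_i/\partial u_j$ either to a function of the single pair $(u_i,u_j)$ (when $i\neq j$) or to a sum of pairwise functions $\partial g_{ik}/\partial u_i(|x|,u_i,u_k)$ (when $i=j$), thereby removing the ambiguous slots. Monotonicity then yields \eqref{b_ii}--\eqref{b_ij}; the equality case when $u_i=u_i^{\sigma_e}$ and $u_j=u_j^{\sigma_e}$ is read off directly once the convex combination collapses to $u_j$ and the other slots are either irrelevant by (iii) or coincide for $m=2$.

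For the full coupling of \eqref{diff-1}, \eqref{diff-2} and \eqref{equation-b-es}, cooperativity is automatic from $\partial f_i/\partial u_j\geq 0$ for $i\neq j$, since the integrands in \eqref{2.14'}--\eqref{2.14''} and the half-sum in \eqref{b-es} are then nonnegative. For the strict-coupling condition, given any nontrivial partition $I\sqcup J=\{1,\dots,m\}$, hypothesis (i) furnishes $i_0\in I, j_0\in J$ with $\partial f_{i_0}/\partial u_{j_0}(|x|, U(x))>0$ on a set of positive measure in $\Omega(e)$ (respectively $\Omega$). Using hypothesis (iii) for $m\geq 3$, or the fact that $m=2$, $\partial f_{i_0}/\partial u_{j_0}$ depends only on the $(u_{i_0},u_{j_0})$ slots; by continuity of this partial derivative and of $U$, positivity persists at the mixed point (respectively the convex-combination point) for a range of the integration parameter on a set of positive measure, giving $\tilde b^e_{i_0j_0}<0$, $b^e_{i_0j_0}<0$, and $b^{e,s}_{i_0j_0}\neq 0$ with the required sign, as needed.

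The inequality \eqref{b>b-es} in part~(ii) follows by applying the convexity hypothesis (ii) of Theorem \ref{tf'convessa} pointwise in $t$ to the integrand of \eqref{2.14'},
\[
\frac{\partial f_i}{\partial u_j}(|x|,\,tU(x)+(1-t)U(\sigma_e(x)))\;\leq\; t\frac{\partial f_i}{\partial u_j}(|x|,U(x))+(1-t)\frac{\partial f_i}{\partial u_j}(|x|,U(\sigma_e(x))),
\]
and integrating over $t\in[0,1]$; with the sign conventions of \eqref{2.14'} and \eqref{b-es} this is precisely \eqref{b>b-es}. When $U$ is $\sigma_e$-invariant, both $tU(x)+(1-t)U(\sigma_e(x))$ and the half-sum $\tfrac12(U(x)+U(\sigma_e(x)))$ collapse to $U(x)$, yielding the final common value. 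I expect the main obstacle to be the strict-coupling step of the second paragraph: the integrands defining $\tilde b^e_{ij}$ and $b^e_{ij}$ are evaluated at mixed points rather than at $U(x)$, so transferring the positive-measure condition from $\partial f_{i_0}/\partial u_{j_0}(|x|, U(x))>0$ to the corresponding $b$-coefficient genuinely requires hypothesis (iii) (for $m\geq 3$) to eliminate the uncontrolled coordinate comparisons, together with continuity in the surviving ones.
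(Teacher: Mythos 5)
Your handling of the off-diagonal bound \eqref{b_ij} (under hypothesis (iii) the dependence collapses to the pair $(u_i,u_j)$) and of the convexity inequality \eqref{b>b-es} is essentially right, but two steps fail as written. For the diagonal bound \eqref{b_ii} you claim that hypothesis (iii) ``removes the ambiguous slots'' when $i=j$; it does not. Under (iii), $\frac{\partial f_i}{\partial u_i}(|x|,v_1,\dots,v_m)=\sum_{k\neq i}\frac{\partial g_{ik}}{\partial u_i}(|x|,v_i,v_k)$ still depends on every $v_k$, and with the telescoping order fixed by \eqref{2.14''} the argument of $\frac{\partial f_i}{\partial u_i}$ inside $\tilde b_{ii}^e$ is $(u_1^{\sigma_e},\dots,u_{i-1}^{\sigma_e},\,tu_i+(1-t)u_i^{\sigma_e},\,u_{i+1},\dots,u_m)$, so for $i\geq 2$ the first $i-1$ slots sit at $u_k^{\sigma_e}$ and the single hypothesis $u_i\geq u_i^{\sigma_e}$ gives no control of the sign of $u_k-u_k^{\sigma_e}$ there; monotonicity from (ii) is then applied in a direction you do not know. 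The repair (which I take to be the content of Lemma 3.1 of \cite{DAPA}) is to telescope the $i$-th variable \emph{first} when writing the $i$-th equation, so the only moved slot in $\tilde b_{ii}^e$ is slot $i$ and \eqref{b_ii} follows from (ii) alone; hypothesis (iii) is then used exactly where you used it, namely for the off-diagonal coefficients, where slot $i$ already sits at $u_i^{\sigma_e}$.

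The second, larger gap is the one you yourself flag as ``the main obstacle'': the strict-coupling step for $\tilde B^e$. Positivity of $\frac{\partial f_{i_0}}{\partial u_{j_0}}(|x|,U(x))$ on a set of positive measure does not, by continuity alone, give $\tilde b^e_{i_0j_0}<0$ on a set of positive measure, because the endpoints $t=0,1$ of the mixed argument in \eqref{2.14''} are \emph{not} $U(x)$: under (iii) and for $i_0<j_0$ the $t=1$ integrand is $\frac{\partial g_{i_0j_0}}{\partial u_{j_0}}(|x|,u_{i_0}^{\sigma_e}(x),u_{j_0}(x))$, about which the full-coupling hypothesis says nothing. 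For $b^e_{i_0j_0}$ and $b^{e,s}_{i_0j_0}$ the $t=1$ (resp.\ first-summand) evaluation \emph{is} $\frac{\partial f_{i_0}}{\partial u_{j_0}}(|x|,U(x))$, so there your continuity argument is clean; for $\tilde B^e$ you must either use the reordered telescoping above, or combine full coupling in $\Omega(e)$ and $\Omega(-e)$ and evaluate at $t=0$ on the reflected positive-measure set. A smaller point: as printed \eqref{b-es} lacks a minus sign, which would force $b^{e,s}_{ij}\geq 0$ for $i\neq j$ while $b^e_{ij}\leq 0$, making \eqref{b>b-es} impossible; your convexity computation tacitly uses the intended convention $b^{e,s}_{ij}=-\tfrac12\big(\frac{\partial f_i}{\partial u_j}(|x|,U(x))+\frac{\partial f_i}{\partial u_j}(|x|,U(\sigma_e(x)))\big)$ and this should be stated rather than hidden behind ``with the sign conventions.'' (The paper itself does not prove the lemma here but refers to Lemma 3.1 of \cite{DAPA} and Lemma 3.1 of \cite{DGP}, so there is no local proof to compare against; the gaps above are nonetheless substantive.)
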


\noindent The proof of this lemma is the same as in the case of bounded domains, see Lemma 3.1 of \cite{DAPA} for case $i)$ and Lemma 3.1 of \cite{DGP} for case $ii)$.
Throughout the paper we will denote by
\begin{equation}\label{Q-es}
Q_{es}(\psi,D):=\int_D \sum_{i=1}^m |\na \psi_i|^2 +\sum_{i,j=1}^m b_{ij}^{e,s}(x)\psi_j\psi_i\, dx
\end{equation}
the quadratic form associated with the linear system \eqref{equation-b-es}, and by
\begin{equation}\label{P-es}
P_{es}(\psi,\phi,D):=\int_D \sum_{i=1}^m \na \psi_i\cdot \na \phi_i +\frac 12\sum_{i,j=1}^m \left(b_{ij}^{e,s}(x)+b_{ji}^{e,s}(x)\right)\psi_j\phi_i\, dx
\end{equation}
the corresponding bilinear symmetric form.\\
Now we recall the following result.
\begin{lemma}\label{l2.1}
Assume that $|\na U|\in L^2 (\Omega)$. 
Then, for any $e \in S^{N-1}$ and for any $j=1,\dots,m$,  we have  
\begin{equation}
\frac 1{R^2}\int_{B_{2R}\setminus B_R} w_j ^2\, dx \ra  0 \quad \hbox{ as }R \to  \infty
\end{equation}
where $w_j$ are the components of the function $W^e$.
\end{lemma}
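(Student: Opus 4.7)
The plan is to exploit the antisymmetry of $W^e$ under the reflection $\sigma_e$ together with the Poincar\'e--Wirtinger inequality on the annulus $A_R := B_{2R}\setminus B_R$. Indeed, by construction
$$w_j(\sigma_e(x)) = u_j(\sigma_e(x)) - u_j(\sigma_e(\sigma_e(x))) = -w_j(x),$$
so $w_j$ is odd with respect to $\sigma_e$. Since the annulus $A_R$ is invariant under $\sigma_e$ (reflections preserve distances from the origin), this immediately gives
$$\int_{A_R} w_j(x)\, dx = 0$$
for every $R>0$ (and, when $\Omega=\R^N\setminus B$, for every $R\geq 1$, which is harmless since we are letting $R\to\infty$).

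Next, I would apply the Poincar\'e--Wirtinger inequality on the reference annulus $A_1=B_2\setminus B_1$: there exists a constant $C=C(N)$ such that, for any $f\in H^1(A_1)$ with $\int_{A_1} f=0$, one has $\int_{A_1} f^2\leq C\int_{A_1}|\nabla f|^2$. A standard scaling argument ($x=Ry$) then yields, for every $R>0$,
\begin{equation*}
\int_{A_R} w_j^2\, dx \;\leq\; C\, R^2 \int_{A_R} |\na w_j|^2\, dx,
\end{equation*}
using the zero-mean property established above. Dividing by $R^2$, the lemma will follow once I show that $\int_{A_R}|\na w_j|^2\, dx \to 0$ as $R\to\infty$.

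For the latter, observe that $|\na w_j(x)|^2 \leq 2\bigl(|\na u_j(x)|^2 + |\na u_j(\sigma_e(x))|^2\bigr)$. Since $A_R$ is $\sigma_e$-invariant, a change of variables gives
$$\int_{A_R} |\na u_j(\sigma_e(x))|^2\, dx = \int_{A_R} |\na u_j(y)|^2\, dy,$$
hence $\int_{A_R}|\na w_j|^2\, dx \leq 4 \int_{A_R}|\na u_j|^2\, dx$. The hypothesis $|\na U|\in L^2(\Omega)$ implies $|\na u_j|^2\in L^1(\Omega)$, and since the characteristic functions $\chi_{A_R}$ converge to $0$ pointwise (each fixed $x$ lies outside $A_R$ for $R$ large), dominated convergence yields $\int_{A_R}|\na u_j|^2\, dx\to 0$. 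Combining the two estimates closes the argument.

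There is no real obstacle: the only point that deserves care is the antisymmetry observation $\int_{A_R}w_j=0$, which turns a seemingly delicate $H^1$--$L^2$ estimate on an annulus of growing diameter into the mean-zero Poincar\'e inequality whose constant scales like $R^2$. Without that, the natural Poincar\'e-type bound (e.g. integrating from the hyperplane $T(e)$) would produce a gradient integral over the whole ball $B_{2R}$ rather than over the annulus alone, which would be too weak to conclude.
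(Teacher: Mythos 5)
Your proof is correct. The paper does not reproduce the argument but cites Lemma~2.2 of \cite{GPW}, and the proof there rests on the same two observations you identify: the antisymmetry $w_j\circ\sigma_e=-w_j$ makes $w_j$ mean-zero on the $\sigma_e$-invariant annulus $B_{2R}\setminus B_R$, and the Poincar\'e--Wirtinger inequality on the reference annulus $B_2\setminus B_1$, rescaled by $x\mapsto Rx$, then yields $\int_{B_{2R}\setminus B_R}w_j^2\,dx\le CR^2\int_{B_{2R}\setminus B_R}|\nabla w_j|^2\,dx\le 4CR^2\int_{B_{2R}\setminus B_R}|\nabla u_j|^2\,dx\to 0$ by $|\nabla U|\in L^2(\Omega)$.
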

\noindent See Lemma 2.2 in \cite{GPW} for a detailed proof.
\begin{lemma}\label{ln1}
Let $U$ be a solution of \eqref{1} and \eqref{2}, such that the system \eqref{1} is fully coupled along $U$ in $\Omega(e)$ for any $e\in S^{N-1}$. Then there exists $\bar R>0$ such that the system \eqref{1} is fully coupled  along $U$ in $\Omega(e)\cap B_R$  for any $R>\bar R$ and for any $e\in S^{N-1}$.
\end{lemma}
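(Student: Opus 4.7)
The plan is to reduce the statement to the fact that the full coupling condition involves only finitely many combinatorial choices and a continuity/approximation argument in the direction $e$.

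First I observe that cooperativeness is automatic on $\Omega(e)\cap B_R$: since the hypothesis gives that $\tfrac{\de f_i}{\de u_j}(|x|,u_1,\dots,u_m)\geq0$ for every $(x,u)\in\Omega(e)\times\R^m$ for \emph{every} direction $e\in S^{N-1}$ (with $i\neq j$), this inequality in fact holds on all of $\Omega\times\R^m$, hence on any subdomain. The nontrivial part is the positive-measure requirement in Definition \ref{d-sist-cooperativo}. I would argue by contradiction: suppose no such $\bar R$ exists. Then there exist sequences $R_n\to+\infty$, $e_n\in S^{N-1}$, and partitions $\{1,\dots,m\}=I_n\cup J_n$ with $I_n,J_n\neq\emptyset$, $I_n\cap J_n=\emptyset$, such that for \emph{every} $i\in I_n$ and $j\in J_n$
\[
\text{meas}\Big(\big\{x\in\Omega(e_n)\cap B_{R_n}:\tfrac{\de f_i}{\de u_j}(|x|,U(x))>0\big\}\Big)=0.
\]

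Since there are only finitely many partitions $(I,J)$ of $\{1,\dots,m\}$, passing to a subsequence I may assume $(I_n,J_n)=(I,J)$ for every $n$. Since $S^{N-1}$ is compact, a further subsequence gives $e_n\to\tilde e\in S^{N-1}$. By the hypothesis applied to $\tilde e$ with this fixed partition $(I,J)$, there exist $i_0\in I$ and $j_0\in J$ such that
\[
A:=\Big\{x\in\Omega(\tilde e):\tfrac{\de f_{i_0}}{\de u_{j_0}}(|x|,U(x))>0\Big\}
\]
has positive Lebesgue measure. For $k\in\N$ and $R>0$, set $A_{k,R}:=\{x\in A:\, x\cdot\tilde e>1/k,\;|x|<R\}$. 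Since $A\subseteq\{x\cdot\tilde e>0\}$, the sets $A_{k,R}$ increase (in $k$ and $R$) to $A$ up to a null set, hence $\text{meas}(A_{k,R})>0$ for some fixed pair $(k,R)$ which I now fix.

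Finally, for $n$ large I have $e_n\cdot\tilde e>1-\varepsilon$ with $\varepsilon$ arbitrarily small; in particular for every $x\in A_{k,R}$,
\[
x\cdot e_n=x\cdot\tilde e+x\cdot(e_n-\tilde e)\;\geq\;\tfrac{1}{k}-R|e_n-\tilde e|\;>\;0
\]
once $n$ is large enough, and also $R<R_n$ eventually. Thus $A_{k,R}\subseteq\Omega(e_n)\cap B_{R_n}$ for $n$ large, so
\[
\text{meas}\Big(\big\{x\in\Omega(e_n)\cap B_{R_n}:\tfrac{\de f_{i_0}}{\de u_{j_0}}(|x|,U(x))>0\big\}\Big)\;\geq\;\text{meas}(A_{k,R})>0,
\]
contradicting the choice of the sequences with the pair $(i_0,j_0)\in I\times J$. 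This contradiction yields the desired $\bar R$.

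The only delicate point is to ensure that the constant $\bar R$ can be chosen uniformly in $e\in S^{N-1}$; this is what forces the simultaneous compactness argument on $S^{N-1}$ together with the finiteness of the combinatorial choices. Once these two reductions are in place, the approximation of $A\cap\Omega(\tilde e)$ by the truncated sets $A_{k,R}$ sitting strictly inside $\Omega(e_n)$ is routine.
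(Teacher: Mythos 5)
Your proof is correct and follows essentially the same contradiction-by-compactness argument as the paper: negate the conclusion to get sequences $R_n\to\infty$, $e_n$, and partitions, use finiteness of the partitions and compactness of $S^{N-1}$ to pass to a fixed partition $(I,J)$ and a limit direction $\tilde e$, then invoke the hypothesis in $\Omega(\tilde e)$ to produce a positive-measure set for some $(i_0,j_0)$ that must eventually sit inside $\Omega(e_n)\cap B_{R_n}$. The only cosmetic difference is in the final step: the paper uses continuity of $x\mapsto\tfrac{\de f_{i_0}}{\de u_{j_0}}(|x|,U(x))$ to extract a ball $B_\rho(x)\subset\Omega(\tilde e)$ lying in the positivity set, and observes that this fixed ball is eventually contained in $\Omega(e_n)\cap B_{R_n}$; you instead truncate the positivity set to $A_{k,R}$ with a quantitative interior margin $x\cdot\tilde e>1/k$ and bounded radius, which accomplishes the same thing without invoking continuity of the coefficient. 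Both are fine; the paper's version is marginally shorter, yours is marginally more robust.
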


\begin{proof}
Assume, by contradiction, that there exists a sequence of radii $R_n\to +\infty$, a sequence of directions $e_n\in S^{N-1}$ and a sequence of subsets $I_n\subset \{1,\dots,m\}$ such that
\begin{equation}\label{f1}
\mathit{meas}\left(\left\{ x\in \Omega(e_n)\cap B_{R_n}, \frac{\de f_{i_n}}{\de u_{j_n}}(|x|,U)>0\right\}\right)=0
\end{equation}
for any  $i_n\in I_n$ and for any $j_n\in \{1, \dots, m\}\setminus I_n$.\\
Since $I_n\subset \{1, \dots, m\}$ there exists $I\subset \{1, \dots, m\}$, $I\neq \emptyset$ and a subsequence $s(n)$ such that $I_{s(n)}=I$ for any $n\in \N$. Up to a subsequence $e_{s(n)}\to e\in S^{N-1}$ and $\Omega(e_{s(n)})\cap  B_{R_{s(n)}}\to \Omega (e)$. \\
The hypothesis of the fully coupling in $\Omega(e)$ implies that there exist $i_0\in I$ and $j_0\in \{1, \dots, m\}\setminus I$ such that
$$\mathit{meas}\left(\left\{ x\in \Omega(e), \frac{\de f_{i_0}}{\de u_{j_0}}(|x|,U)>0\right\}\right)>0.$$
Then there exist $x\in \Omega(e)$ and $B_{\rho}(x)\subset \Omega(e)$ such that $B_{\rho}(x)\subset \left\{ x\in \Omega(e), \frac{\de f_{i_0}}{\de u_{j_0}}(|x|,U)>0\right\}$.
By continuity $B_{\rho}(x)\subset\Omega(e_{s(n)})\cap B_{R_{s(n)}}$ contradicting \eqref{f1}.
\end{proof}
\begin{remark}\label{lfa}\rm
The same proof of the previous Lemma shows also that, if  $U$ is a solution of \eqref{1} and \eqref{2}, such that the system \eqref{1} is fully coupled along $U$ in $\Omega$ then there exists $\bar R>0$ such that the system \eqref{1} is fully coupled along $U$ in $\Omega\cap B_R$ for any $R>\bar R$. Moreover by $ii)$ of Lemma \ref{l-differenza} we also have that the linear system  \eqref{equation-b-es} is fully coupled in $\Omega(e)\cap B_R$  for any $R>\bar R$ and for any $e\in S^{N-1}$.
\end{remark}
\noindent Let $U$ be a solution of \eqref{1} and \eqref{2} and let  $D\subseteq \Omega$. We we will denote by $L_U$ the linearized operator at $U$, i.e. $L_U=-\Delta -J_F(|x|,U)$ where $ J_F(|x|,U)$ is the jacobian matrix of $F$ computed at $U$, and we define the linearized system at $U$, i.e.
\begin{equation}\label{linearized}
\left\{
\begin{array}{ll}
-\Delta \psi -J_F(|x|,U)\psi & \hbox{ in } D\\
\psi=0 &\hbox{ on } \de D.
\end{array}
\right.
\end{equation}
Associated with system \eqref{1}, or with system \eqref{linearized}, we can consider the quadratic form
\begin{equation}\label{f-quadratica}
Q_U(\psi, D):=\int_D \sum_{i=1}^m|\na \psi_i|^2-\sum_{i,j=1}^m \frac{\de f_i}{\de u_j}(|x|,U)\psi_j\psi_i\, dx
\end{equation}
and the bilinear symmetric form
\begin{equation}\label{prod-scalare}
P_U(\psi,\phi, D):=\int_D \sum_{i=1}^m\na \psi_i\cdot \na \phi_i-\frac 12\sum_{i,j=1}^m \Big(\frac{\de f_i}{\de u_j}(|x|,U)+\frac{\de f_j}{\de u_i}(|x|,U)\Big)\psi_j\phi_i\, dx
\end{equation}
for any $\psi,\phi\in C^1_c(D,\R^m)$ (but also for $\psi,\phi\in H^1_0(D,\R^m)$ which vanish outside a bounded set).  In the sequel we will denote by $J_F^t(|x|,U)$ the transpose matrix of $ J_F(|x|,U)$.\\

For an arbitrary subset $D$ of $\R^N$ we let
$\chi_{\stackrel{}{D}}: \R^N \to \R$
denote the characteristic function of $D$.
To avoid complicated notation, we denote the restriction of $\chi_{\stackrel{}{D}}$ to
arbitrary subsets of $\R^N$ again by $\chi_{\stackrel{}{D}}$.\\

We introduce a family of cutoff functions which we will be frequently used
throughout the paper. To this aim we fix a $C^{\infty}$-function $\xi$
defined on $[0,\infty)$ such that  $0\leq \xi\leq
1$ and
\begin{equation}\nonumber
\xi(t)=\left\{
\begin{array}{ll}
1 & \hbox{ if } 0\leq t\leq 1\\
0& \hbox{ if }  t\geq 2.
\end{array}\right.
\end{equation}
For $R>0$, we then define
\begin{equation}
  \label{eq:def-xi_R}
\xi_R \in C^\infty_c(\R^N,\R),\qquad \xi_{R}(x)=\xi\left(\frac
  {|x|}R\right).
\end{equation}
We will denote the restriction of $\xi_R$ to
arbitrary subsets of $\R^N$ again by $\xi_R$.\\

\begin{lemma}\label{l2.2}
Let $U$ be a solution of \eqref{1} and \eqref{2} such that $|\na U|\in L^2(\Omega)$. Let $e\in S^{N-1}$ and 
$D \subset \Omega(e)$ be a (possibly unbounded) open set such that
$W^e_{|\de D}\equiv 0$. Then
\begin{itemize}
\item[i)] if  assumptions $i)$, $ii)$ and $iii)$ of Theorem \ref{tfconvessa} are satisfied and if $W^e\geq 0$  in $D$, we have
\begin{equation}\label{Q_u<0}
\limsup_{R\to +\infty}Q_U(W^e\chi_{D}\xi_R,\Omega(e))\leq 0
\end{equation}
where $Q_U$ is as in \eqref{f-quadratica}.
\item[ii)] if  assumptions $i)$, and $ii)$ of Theorem \ref{tf'convessa} are satisfied, we have
\begin{equation}\label{Q_es<0}
\limsup_{R\to +\infty}Q_{es}((W^e)^{\pm}\chi_{D}\xi_R,\Omega(e))\leq 0
\end{equation}
where $Q_{es}(-,D)$ is as defined in \eqref{Q-es}.
\end{itemize}
\end{lemma}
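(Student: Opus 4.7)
The plan is to test the linear system satisfied by $W^e$ against a truncated version of $W^e$ (for part (i)) or of $(W^e)^\pm$ (for part (ii)), rearrange the resulting identity into $Q_U$ or $Q_{es}$ via the coefficient inequalities of Lemma \ref{l-differenza}, and then collapse the cutoff error using the $L^2$-annular decay of Lemma \ref{l2.1}. Since $W^e$ vanishes on $\partial D$ (in the trace sense) and $\xi_R$ has compact support, the functions $W^e \chi_D \xi_R$ and $(W^e)^\pm \chi_D \xi_R$ lie in $H^1_0(\Omega(e),\R^m)$ with compact support and are admissible test vectors.

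For part (i) I would work with formulation \eqref{diff-1} involving the matrix $\widetilde B^e$. Multiplying the $i$-th equation by $w_i \xi_R^2 \chi_{D}$, summing over $i$, and using the a.e.\ identity $\nabla w_i \cdot \nabla(w_i \xi_R^2) = |\nabla(w_i\xi_R)|^2 - w_i^2 |\nabla \xi_R|^2$ gives
\begin{equation*}
Q_U(W^e \chi_{D} \xi_R, \Omega(e)) = \int_D \sum_{i=1}^m w_i^2 |\nabla \xi_R|^2 \, dx - \int_D \sum_{i,j=1}^m \Big(\widetilde b_{ij}^e + \frac{\partial f_i}{\partial u_j}(|x|,U)\Big) w_i w_j \xi_R^2 \, dx.
\end{equation*}
The hypothesis $W^e \geq 0$ in $D$ makes $w_i, w_j \geq 0$; combined with \eqref{b_ii}--\eqref{b_ij} each summand in the second integral is non-negative, so its contribution to $Q_U$ is $\leq 0$. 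The cutoff term is bounded by $C R^{-2} \int_{B_{2R}\setminus B_R}|W^e|^2$, which tends to $0$ as $R \to +\infty$ by Lemma \ref{l2.1}, yielding \eqref{Q_u<0}.

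For part (ii) I would instead work with formulation \eqref{diff-2} and test against $w_i^\pm \xi_R^2 \chi_{D}$, using the pointwise identity $\nabla w_i \cdot \nabla w_i^\pm = |\nabla w_i^\pm|^2$. The parallel computation gives, after substituting $w_j = w_j^+ + w_j^-$,
\begin{equation*}
Q_{es}((W^e)^\pm \chi_{D} \xi_R, \Omega(e)) = \int_D \sum_{i=1}^m (w_i^\pm)^2 |\nabla \xi_R|^2 \, dx - \int_D \sum_{i,j=1}^m \Big[(b_{ij}^e - b_{ij}^{e,s}) w_j^\pm + b_{ij}^e w_j^\mp\Big] w_i^\pm \xi_R^2 \, dx.
\end{equation*}
By \eqref{b>b-es}, $(b_{ij}^e - b_{ij}^{e,s}) w_i^\pm w_j^\pm \geq 0$; by the cooperativity of $B^e$ in Lemma \ref{l-differenza}(ii) the off-diagonal terms $b_{ij}^e w_j^\mp w_i^\pm$ ($i \neq j$) also have the correct sign (product of a non-positive $b_{ij}^e$ with opposite-signed factors), and on the diagonal $w_i^+ w_i^- \equiv 0$. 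Thus $Q_{es}$ is again dominated by the vanishing cutoff integral, giving \eqref{Q_es<0}.

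The main technical obstacle is the sign bookkeeping in part (ii): once $w_j$ is decomposed into positive and negative parts, four types of products appear in the remainder, and one must carefully verify that the cooperativity of $B^e$ provides the correct sign for the off-diagonal cross terms while the convexity inequality \eqref{b>b-es} handles the like-sign diagonal blocks. A subsidiary but routine point is to justify that $W^e \chi_D \xi_R$ (and its signed-part variants) are genuine $H^1_0$ test functions when $\partial D$ may be non-smooth; this follows from a standard mollification/density argument using $W^e_{|\partial D} \equiv 0$.
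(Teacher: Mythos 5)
Your proof is correct and follows essentially the same route as the paper's: test the linear system \eqref{diff-1} (resp.\ \eqref{diff-2}) against $w_i\chi_D\xi_R^2$ (resp.\ $w_i^{\pm}\chi_D\xi_R^2$), rewrite the result as $Q_U$ (resp.\ $Q_{es}$) of the cutoff function plus a cutoff error, use the coefficient inequalities \eqref{b_ii}--\eqref{b_ij} (resp.\ \eqref{b>b-es} plus cooperativity of $B^e$) to discard a nonpositive bulk term, and send the $R^{-2}\int_{B_{2R}\setminus B_R}$ error to zero via Lemma~\ref{l2.1}. A minor remark: you carefully use the decomposition $w_j=w_j^++w_j^-$ consistent with the paper's convention $g^-=\min\{g,0\}$, which is in fact what the sign analysis requires; the paper's displayed line $(w_j^+-w_j^-)$ looks like a slip relative to its own stated convention, but the two computations agree.
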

\noindent Note that since $W^e_{|\de D}\equiv 0$, the function
$W^e\chi_{D}\xi_R$  belongs to $H^1_0(\Omega(e),\R^m)$ for $R > 0$ and vanishes a.e. outside a bounded subset of $\Omega(e)$.
Hence the quadratic forms  $Q_U$
 and $Q_{es}$ are well defined on these functions.
\begin{proof}
{\rm i)} The function $W^e=(w_1,\dots,w_m)$ satisfies the linear system \eqref{diff-1} in $\Omega(e)$.
Multiplying the $i$-th equation of the linear system  \eqref{diff-1}  by $w_i \chi_{D}\xi_R^2$ and integrating on $\Omega(e)$ we get
\begin{align*}
0&=\int_{\Omega(e)} (-\Delta w_i)(w_i \chi_{D}\xi_R^2)\, dx +\sum_{j=1}^m \int_{\Omega(e)} \tilde b_{ij}^e(x) w_j (w_i \chi_{D}\xi_R^2)\, dx\\
&=\int_{D}\na w_i\cdot \na (w_i\xi_R^2)\, dx+\sum_{j=1}^m \int_{D} \tilde b_{ij}^e(x) w_j (w_i \xi_R^2)\, dx.
\end{align*}
Using \eqref{b_ii} and \eqref{b_ij}, we deduce
$$0\geq \int_{D}\na w_i\cdot \na (w_i\xi_R^2)\, dx- \sum_{j=1}^m \int_{D}\frac{\de f_i}{\de u_j}(|x|,U)(w_j \xi_R)(w_i\xi_R)\, dx$$
for any $i=1,\dots,m$.
Letting $v_R:= W^e\chi_{D}\xi_R$, we have, from the definition of $Q_U$, that
\begin{align*}
Q_U(v_R,\Omega(e))&=\sum_{i=1}^m \int_{\Omega(e)}|\na w_i\chi_{D} \xi_R|^2\, dx-\sum_{i,j=1}^m \int_{\Omega(e)}\frac{\de f_i}{\de u_j}(|x|,U)(w_j \chi_{D}\xi_R)(w_i\chi_{D}\xi_R)\, dx\\
&= \sum_{i=1}^m \int_{D}\left[w_i^2 |\na \xi_R|^2+\na w_i\cdot \na (w_i\xi_R^2)\right]\, dx-\sum_{i,j=1}^m \int_{D}\frac{\de f_i}{\de u_j}(|x|,U)(w_j \xi_R)(w_i\xi_R)\, dx\\
&=\sum_{i=1}^m\int_{D} \na w_i\cdot \na (w_i\xi_R^2)-\sum_{j=1}^m  \frac{\de f_i}{\de u_j}(|x|,U)(w_j \xi_R)(w_i\xi_R)\, dx+\sum_{i=1}^m \int_{D}w_i^2 |\na \xi_R|^2\, dx\\
&\leq \sum_{i=1}^m \int_{D}w_i^2 |\na \xi_R|^2\, dx\leq \sum_{i=1}^m \frac C{R^2}\int_{B_{2R}\setminus B_R}w_i^2\, dx\to 0,
\end{align*}
having applied Lemma \ref{l2.1}. Thus {\rm i)} is proved.\\[.3cm]
{\rm ii)} We give the proof of \eqref{Q_es<0} for $(W^e)^+$. The case of $(W^e)^-$ follows in the same way.   The function $W^e$ satisfies the linear system \eqref{diff-2} in $\Omega(e)$. Multiplying the $i$-th equation of this system by $w_i^+\chi_{D}  \xi_R^2 $ and integrating on $\Omega(e)$ we get
$$\int_{\Omega(e)} \left(\Delta w_i\right)\left( w_i^+ \chi_{D}\xi_R^2\right)\, dx=-\int_D \na w_i\cdot \na \left(w_i^+\xi_R^2\right)\, dx=\sum_{j=1}^m \int_D b_{ij}^e(x) w_j w_i^+\xi_R^2\, dx$$
for $i=1,\dots,m$, so that, letting $v_R:=(W^e)^+\chi_{D} \xi_R$, we have
$$\int_{\Omega(e)}|\na v_R|^2 \, dx=\sum_{i=1}^m \int_{D} (w_i^+)^2|\na \xi_R|^2\, dx-\sum_{i,j=1}^m \int_{D}b_{ij}^e(x) w_jw_i^+ \xi_R^2\, dx.$$
Then, the definition of $Q_{es}$ implies that
\begin{align*}
Q_{es}(v_R,&\,\Omega(e))=\int_{\Omega(e)}|\na v_R|^2 \, dx
+\sum_{i,j=1}^m \int_{\Omega(e)}b_{ij}^{es}(x) w_j^+w_i^+\chi_{D} \xi_R^2\, dx\\
&=\sum_{i=1}^m \int_{D} (w_i^+)^2|\na \xi_R|^2\, dx-\sum_{i,j=1}^m \int_{D}b_{ij}^e(x) (w_j^+-w_j^-)w_i^+ \xi_R^2\, dx\\
&+\sum_{i,j=1}^m \int_{D}b_{ij}^{es}(x) w_j^+w_i^+ \xi_R^2\, dx\\
&=\sum_{i,j=1}^m \int_{D}\left(b_{ij}^{es}(x)-b_{ij}^e(x)\right) w_j^+w_i^+ \xi_R^2\, dx\\
&+\sum_{\substack{i,j=1\\i\neq j}}^m \int_{D}b_{ij}^e(x) w_j^-w_i^+ \xi_R^2\, dx+\sum_{i=1}^m \int_{D} (w_i^+)^2|\na \xi_R|^2\, dx.
\end{align*}
Thus from Lemma \ref{l-differenza} we get that $b_{ij}^e(x)\leq 0$ for  $i\neq j$ and that  $b_{ij}^{es}(x)\leq b_{ij}^e(x)$. Then
$$Q_{es}(v_R,\Omega(e))\leq \limsup_{R\to +\infty}\frac C{R^2}\sum_{i=1}^m \int_{B_{2R}\setminus B_R}(w_i^+)^2\, dx$$
and \eqref{Q_es<0} follows again by Lemma \ref{l2.1}.
\end{proof}
\section{\textbf{Sufficient conditions for foliated Schwarz symmetry}}\label{se:3}
\begin{lemma}\label{lfss}
Let $U=(u_1,\dots,u_m)$ be a solution of \eqref{1} and \eqref{2} and assume that the hypothesis $i)$ either of Theorem \ref{tfconvessa} or of Theorem \ref{tf'convessa} holds.
If for every  $e\in S^{N-1}$ we have either $U\geq
U^{\sigma_e}$ in $ \Omega(e)$ or $U\leq U^{\sigma_e}$
in $ \Omega(e)$, then $U$ is foliated Schwarz symmetric.
\end{lemma}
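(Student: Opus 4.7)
The plan is to upgrade the hypothesis to a strict trichotomy via the strong maximum principle, deduce that the values of $U$ on each sphere form a totally ordered chain in $\R^m$, and then identify the axis of foliated Schwarz symmetry as the unique maximum point on each sphere via a Hopf-lemma argument.

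\emph{Setup and trichotomy.} Define
\[A=\{e\in S^{N-1}:W^e\ge 0\text{ in }\Omega(e)\},\qquad B=-A,\qquad Z=A\cap B;\]
these are closed subsets of $S^{N-1}$ by continuity of $U$, and $A\cup B=S^{N-1}$ by hypothesis. Assumption $i)$ of Theorem \ref{tfconvessa} or Theorem \ref{tf'convessa}, combined with Lemma \ref{l-differenza}, implies that the linear system \eqref{diff-1} for $W^e$ is fully coupled on $\Omega(e)$. Theorem \ref{SMP} then yields a strict trichotomy: for each $e$ either $W^e\equiv 0$ (so $e\in Z$), or $W^e>0$ strictly in $\Omega(e)$, or $W^e<0$ strictly in $\Omega(e)$; in the strict cases Hopf's lemma gives $\partial_e W^e\neq 0$ on $T(e)\cap \Omega$.

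\emph{Chain structure and Hopf at the maxima.} For distinct $x_1,x_2\in \Omega$ with $|x_1|=|x_2|$, setting $e=(x_2-x_1)/|x_2-x_1|$ gives $\sigma_e(x_2)=x_1$ and $x_2\in \Omega(e)$, so the trichotomy evaluated at $x_2$ forces $U(x_1)$ and $U(x_2)$ to be componentwise comparable. Hence for each sphere $S_r\subset \Omega$ the image $U(S_r)$ is a compact connected chain in $\R^m$, admitting a unique componentwise maximum value $V_r=\max U(S_r)$ attained on a nonempty closed set $M_r\subset S_r$. For any $x^{*}\in M_r$ and any $e\perp x^{*}$ (so $e$ is tangent to $S_r$ at $x^{*}$), $\sigma_e$ fixes $x^{*}$ and $x^{*}\in T(e)\cap \Omega$, so $W^e(x^{*})=0$. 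The strict alternatives of the trichotomy are excluded by Hopf's lemma at $x^{*}$: they would give $\partial_e W^e(x^{*})=2\,\partial_e U(x^{*})\neq 0$, contradicting the vanishing of the tangential derivative of $U$ at its maximum on $S_r$. Hence $W^e\equiv 0$, i.e., $e\in Z$; equivalently $Z\supset (x^{*})^{\perp}\cap S^{N-1}$ for every $x^{*}\in M_r$.

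\emph{Axis and conclusion.} If $M_r$ contains two distinct points, or if $M_r=\{rp_r\}$ with $p_r$ depending on $r$, then $Z$ contains two distinct great $(N-2)$-subspheres; applying the trichotomy to a suitable $e$ in the $2$-plane spanned by two candidate axis directions, together with the antisymmetry $W^e(\sigma_e x)=-W^e(x)$, leads to incompatible signs for $W^e$ at the two sphere-maxima unless $U$ is radial (in which case the conclusion is trivial). Otherwise there is a single axis $p\in S^{N-1}$ with $M_r=\{rp\}$ for every $r$ where $U|_{S_r}$ is non-constant. By the preceding step $Z\supset p^{\perp}\cap S^{N-1}$; conversely $e\in Z$ implies $\sigma_e(rp)\in M_r=\{rp\}$, so $e\perp p$, giving $Z=p^{\perp}\cap S^{N-1}$. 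Hence $U$ is invariant under every reflection across a hyperplane through the axis $\R p$, so $U(x)$ depends only on $|x|$ and $\theta=\arccos(x\cdot p/|x|)$. Componentwise non-increase in $\theta$ follows from the trichotomy applied at each $e$ with $e\cdot p>0$: such $e\notin Z$ and $p\in \Omega(e)$, while $W^e(rp)\ge 0$ because $rp$ maximizes $U$ on $S_r$; the strict alternative $W^e<0$ in $\Omega(e)$ is thereby excluded, leaving $W^e>0$ in $\Omega(e)$.

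The main obstacle is the rigidity step ruling out two distinct axes: it relies on combining the strict trichotomy with the antisymmetry $W^e(\sigma_e x)=-W^e(x)$ and a careful evaluation of $W^e$ at the two sphere-maxima, and it is the only place where the purely topological structure of $A,B,Z\subset S^{N-1}$ is not enough and the PDE-geometric content (Hopf plus tangential vanishing at maxima) is essential.
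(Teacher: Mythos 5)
Your chain-structure argument and the use of Hopf's lemma at a sphere-maximum are sound ideas, and they overlap with the core of the cited proofs (the chain structure in particular is the essential new ingredient for systems over the scalar case). However, the paragraph ``Axis and conclusion'' contains a genuine gap. You claim that two distinct candidate axis directions force ``incompatible signs for $W^e$ at the two sphere-maxima.'' This works when $r_1 p_1\in M_{r_1}$ and $r_2 p_2\in M_{r_2}$ with $r_1\neq r_2$ and each maximum unique on its sphere (then $W^e(r_1p_1)>0$ and $W^e(\sigma_e(r_2p_2))<0$ are both strict). But when $p_1\neq\pm p_2$ both lie in the same $M_r$, the two quantities $W^e(rp_1)=V_r-U(\sigma_e(rp_1))$ and $W^e(\sigma_e(rp_2))=U(\sigma_e(rp_2))-V_r$ can both vanish (namely whenever $\sigma_e(rp_1)$ and $\sigma_e(rp_2)$ again lie in $M_r$), so no sign incompatibility is obtained and you have not ruled out $M_r$ having several elements, nor shown it is a singleton independent of $r$. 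The argument as written therefore does not establish the uniqueness of the axis, which is precisely what you flag as ``the main obstacle.'' A secondary slip: under hypothesis $i)$ alone, Lemma~\ref{l-differenza} gives the full coupling for the system \eqref{diff-2} with matrix $B^e$, via part $ii)$ of that lemma; part $i)$ and system \eqref{diff-1} require all three hypotheses of Theorem~\ref{tfconvessa}, so you should invoke \eqref{diff-2}.

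The good news is that the rigidity step is entirely avoidable, so the proof can be salvaged along the lines you already set up. Fix one $r_0$ with $S_{r_0}\subset\Omega$ and $U|_{S_{r_0}}$ non-constant (if no such $r_0$ exists, $U$ is radial and there is nothing to prove), and pick any $p$ with $r_0p\in M_{r_0}$. For every $e$ with $e\cdot p>0$ one has $r_0p\in\Omega(e)$ and $W^e(r_0p)=U(r_0p)-U(\sigma_e(r_0p))\geq 0$ by maximality, so the hypothesis that $W^e$ has constant sign in $\Omega(e)$ (reinforced by the strong maximum principle if $W^e(r_0p)=0$) forces $W^e\geq 0$ in $\Omega(e)$; by antisymmetry $W^e\leq 0$ when $e\cdot p<0$; and when $e\perp p$, approximating $e$ from both sides and using closedness of the sets $\{e:W^e\geq 0\}$ and $\{e:W^e\leq 0\}$ gives $W^e\equiv 0$, so $Z\supset p^\perp\cap S^{N-1}$ directly (Hopf's lemma is not needed here). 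This already yields axial symmetry about $\R p$ and, together with $W^e\geq 0$ for $e\cdot p>0$, the componentwise monotone decrease in $\theta$ exactly as in your final sentence. No claim about $M_r$ being a singleton, nor about the axis being the same for all $r$, is required; those facts are an a~posteriori consequence of the conclusion, not an input to it.
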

\noindent The proof is exactly the same as in the case of bounded domains, see Lemma  3.2 in \cite{DAPA} and  Lemma 3.2 in \cite{DGP} as well as \cite{W} for the scalar case.\\[.5cm]
We will now describe other sufficient conditions for the foliated Schwarz symmetry of a solution $U$ of \eqref{1} and \eqref{2}.\\
To this end we begin with some  geometric considerations about cylindrical coordinates with respect to the plane $x_1x_2$.\\
Suppose $\beta\in \R$ and let $e_{\beta}=(\cos \beta,\sin \beta,0,\dots,0)$ be a unit vector in the $x_1x_2$ plane.
Then we consider as before the hyperplane $T(e_{\beta})$ and, for simplicity, we will use the notations
$\Omega_{\beta}=\Omega( e_{\beta})$, $T_{\beta}=T(e_{\beta})$, $\sigma_{\beta}=\sigma_{e_{\beta}}$.\\
Using cylindrical coordinates we will write $x=(x_1,\dots,x_N)$ as $x=(r,\theta,\tilde x)=(r\cos\theta,r\sin\theta,\tilde x)$ where $r=\sqrt{x_1^2+x_2^2}$, $\tilde x=(x_3,\dots,x_N)$ and $\theta\in [0,2\pi]$.\\
It is easy to see that  the reflection $\sigma_{\beta}$ through $T_{\beta}$ can be written as
\begin{equation}\label{lucio-1}
\sigma_{\beta}(r\cos\theta,r\sin\theta,\tilde x)=(r\cos(2\beta-\theta+\pi),r\sin(2\beta-\theta+\pi), \tilde x)
\end{equation}
(in fact $2\beta-\theta+\pi=\theta+2(\beta+\frac{\pi}2-\theta)$).
It can also be proved analytically writing the usual reflection in cartesian coordinates and using simple trigonometry formulas.\\
Of course, since the angular variable is defined up to a multiple of $2\pi$, we could also write the angular variable of the of the point $\sigma_{\beta}(r\cos\theta,r\sin\theta,\tilde x)$ as $ 2\beta-\theta-\pi$. \\
 Let us put $h_{\beta}^{\pm}(\theta)=2\beta-\theta\pm\pi$. \\
Note that if we choose an interval $[\theta_0,\theta_0+2\pi)$ to which the angular coordinate $\theta$ belongs, the images $2\beta-\theta \pm \pi$ could not belong to the same interval.
Nevertheless we observe that for a fixed $\tilde\beta\in \R$, if we take $\theta\in [\tilde \beta-\frac{\pi}2,\tilde \beta+\frac 32{\pi}]$ then $h_{\tilde\beta}^+(\theta)$ belongs to the same interval, whereas if we take $\theta\in [\tilde \beta-\frac 32{\pi},\tilde \beta+\frac{\pi}2]$ then $h_{\tilde\beta}^-(\theta)$ belongs to the same interval. More precisely we have
\begin{align}
&\label{lucio-2}\theta \in [\tilde \beta-\frac{\pi}2,\tilde \beta+\frac {\pi}2]\Rightarrow 2\tilde\beta-\theta+\pi=h_{\tilde\beta}^+(\theta)\in [\tilde \beta+\frac{\pi}2,\tilde \beta+\frac 32{\pi}],\\
&\label{lucio-3}\theta \in [\tilde \beta-\frac 32{\pi},\tilde \beta-\frac {\pi}2]\Rightarrow 2\tilde\beta-\theta-\pi=h_{\tilde\beta}^-(\theta)\in [\tilde \beta-\frac{\pi}2,\tilde \beta+\frac {\pi}2].
\end{align}
This can be easily verified evaluating $h_{\tilde\beta}^{\pm}$ in the boundary of the intervals of definition, since the mappings $h_{\tilde\beta}^{\pm}$ are decreasing.\\
Let us denote by $U_{\theta}(r,\theta,\tilde x)$ the derivative of the function $ U$ whit respect to the angular coordinate $\theta$.
\begin{proposition}\label{proposition-A}
Let $\tilde \beta\in \R$ and assume that $U$ is symmetric with respect to the hyperplane $T_{\tilde \beta}$. If $U_{\theta}\geq 0$ in $\Omega_{\tilde \beta}=[x\cdot e_{\tilde\beta}>0]$, then for any $\beta\in [\tilde \beta-\pi,\tilde \beta]$ and for any $x\in \Omega_{\beta}=[x\cdot e_{\beta}>0]$ we have $U(x)\leq U(\sigma_{\beta}(x))$, while for every $\beta\in[\tilde \beta,\tilde \beta+\pi]$ we have $U\geq U^{\sigma_{\beta}}$ in $\Omega_{\beta}$.
\end{proposition}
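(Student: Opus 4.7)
The plan is to convert the sought inequality into a one-dimensional comparison for the map $\theta\mapsto U(r,\theta,\tilde x)$ with $r$ and $\tilde x$ frozen, and then close by combining the symmetry--monotonicity structure of this map with a short trigonometric identity.

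First, I would fix $x=(r\cos\theta_0,r\sin\theta_0,\tilde x)\in\Omega_\beta$, so that $\cos(\theta_0-\beta)>0$, and read off from \eqref{lucio-1} that the angular coordinate of $\sigma_\beta(x)$ is $\theta_1:=2\beta+\pi-\theta_0$. The hypothesis that $U$ is symmetric with respect to $T_{\tilde\beta}$ reads, in cylindrical coordinates, $U(r,\theta,\tilde x)=U(r,2\tilde\beta+\pi-\theta,\tilde x)$, and applying this to $\theta_1$ rewrites $U(\sigma_\beta(x))=U(r,\theta_0+2\gamma,\tilde x)$ with $\gamma:=\tilde\beta-\beta$. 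The statement is thereby reduced to the one-dimensional inequality $U(r,\theta_0,\tilde x)\le U(r,\theta_0+2\gamma,\tilde x)$ in Case 1 (where $\gamma\in[0,\pi]$), with the reverse in Case 2 (where $\gamma\in[-\pi,0]$).

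Next, I would establish the monotonicity principle: \emph{if $\sin(\theta-\tilde\beta)\le\sin(\theta'-\tilde\beta)$, then $U(r,\theta,\tilde x)\le U(r,\theta',\tilde x)$.} Differentiating the symmetry identity gives $U_\theta(r,\theta,\tilde x)=-U_\theta(r,2\tilde\beta+\pi-\theta,\tilde x)$, and together with the hypothesis $U_\theta\ge 0$ on $\Omega_{\tilde\beta}$ (in angular terms, on $\theta\in(\tilde\beta-\pi/2,\tilde\beta+\pi/2)$ mod $2\pi$) this shows that $\theta\mapsto U(r,\theta,\tilde x)$ is nondecreasing on $[\tilde\beta-\pi/2,\tilde\beta+\pi/2]$ and nonincreasing on $[\tilde\beta+\pi/2,\tilde\beta+3\pi/2]$. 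Any angle $\theta\in\R$ has a canonical representative $\theta^\star\in[\tilde\beta-\pi/2,\tilde\beta+\pi/2]$ obtained by reduction modulo $2\pi$ followed, if needed, by the reflection $\theta\mapsto 2\tilde\beta+\pi-\theta$; periodicity and symmetry give $U(r,\theta,\tilde x)=U(r,\theta^\star,\tilde x)$, while $\sin(\pi-(\theta-\tilde\beta))=\sin(\theta-\tilde\beta)$ gives $\sin(\theta-\tilde\beta)=\sin(\theta^\star-\tilde\beta)$. Since $\sin(\cdot-\tilde\beta)$ is strictly increasing on the canonical interval, on which $U$ is nondecreasing, the principle follows.

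Finally, I would close with the sum-to-product identity
\[
\sin(\theta_0+2\gamma-\tilde\beta)-\sin(\theta_0-\tilde\beta)=2\cos(\theta_0-\beta)\sin\gamma.
\]
Since $x\in\Omega_\beta$ forces $\cos(\theta_0-\beta)\ge 0$, and since $\sin\gamma\ge 0$ in Case 1 and $\sin\gamma\le 0$ in Case 2, the right-hand side has the desired sign in each case; the monotonicity principle applied to the pair $(\theta_0,\theta_0+2\gamma)$ (or its reverse) then yields both conclusions. The main delicate point is the bookkeeping around the canonical representative $\theta^\star$: one must carefully check that both $U(r,\cdot,\tilde x)$ and $\sin(\cdot-\tilde\beta)$ descend consistently to the fundamental domain of the group generated by $2\pi$-periodicity and the reflection $\theta\mapsto 2\tilde\beta+\pi-\theta$. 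Once this is in place, the argument collapses to a single trigonometric identity.
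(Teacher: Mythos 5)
Your proof is correct and, while it rests on the same underlying reduction as the paper's, it packages the final comparison more elegantly. Both arguments start from the same observation: writing $x=(r,\theta_0,\tilde x)$ and using the symmetry of $U$ across $T_{\tilde\beta}$, one has $U^{\sigma_\beta}(x)=U(r,\theta_0+2(\tilde\beta-\beta),\tilde x)$, so the claim becomes a monotonicity statement about the one-variable map $\theta\mapsto U(r,\theta,\tilde x)$, which (by $U_\theta\ge 0$ on $(\tilde\beta-\pi/2,\tilde\beta+\pi/2)$ and antisymmetry of $U_\theta$) is nondecreasing on $[\tilde\beta-\pi/2,\tilde\beta+\pi/2]$ and nonincreasing on $[\tilde\beta+\pi/2,\tilde\beta+3\pi/2]$. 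The paper then carries this out via its auxiliary Lemma \ref{lemma-lucio} together with a case split on whether $x\in\Omega_\beta\cap\Omega_{\tilde\beta}$ or $x\in\Omega_\beta\setminus\Omega_{\tilde\beta}$, with some careful interval bookkeeping. You instead observe that both $U(r,\cdot,\tilde x)$ and $\sin(\cdot-\tilde\beta)$ are invariant under the group generated by $2\pi$-translation and the reflection $\theta\mapsto 2\tilde\beta+\pi-\theta$, with a common fundamental domain $[\tilde\beta-\pi/2,\tilde\beta+\pi/2]$ on which $\sin(\cdot-\tilde\beta)$ is strictly increasing and $U$ is nondecreasing; this yields the clean comparison criterion $\sin(\theta-\tilde\beta)\le\sin(\theta'-\tilde\beta)\Rightarrow U(r,\theta,\tilde x)\le U(r,\theta',\tilde x)$, and the sum-to-product identity $\sin(\theta_0+2\gamma-\tilde\beta)-\sin(\theta_0-\tilde\beta)=2\cos(\theta_0-\beta)\sin\gamma$ then closes both halves of the statement at once, with no case split. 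I verified the identity and the invariance claims; the argument is sound, with the only minor point being that $\cos(\theta_0-\beta)>0$ (strictly, since $\Omega_\beta$ is open), which you later relax to $\ge 0$, but that is harmless. Your version absorbs the role of Lemma \ref{lemma-lucio} and the two geometric cases into a single quantitative observable, which is arguably a cleaner exposition of the same idea.
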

\noindent To prove this proposition let us first state and prove a simple lemma.
\begin{lemma}\label{lemma-lucio}
Suppose that $t_0\in ( \tilde \beta-\frac {\pi}2,\tilde \beta+\frac{\pi}2]$ and that the assumptions of Proposition \ref{proposition-A} hold. Then 
\begin{equation}\label{lucio-4}
U(r,t,\tilde x)\geq U(r,t_0,\tilde x)\quad \forall t\in [t_0,2\tilde \beta-t_0+\pi].
\end{equation}
\end{lemma}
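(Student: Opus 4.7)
The plan is to combine two ingredients: the monotonicity hypothesis $U_\theta \geq 0$ on $\Omega_{\tilde\beta}$ (which in cylindrical coordinates identifies with the angular sector $\theta \in [\tilde\beta - \pi/2, \tilde\beta + \pi/2]$, since $x \cdot e_{\tilde\beta} = r\cos(\theta-\tilde\beta)$), and the reflection symmetry, which forces the opposite monotonicity on the complementary sector. The symmetric interval in the statement, $[t_0, 2\tilde\beta - t_0 + \pi]$, is exactly the interval from $t_0$ to its $\sigma_{\tilde\beta}$-image in the angular coordinate, so it is naturally split by $\tilde\beta + \pi/2$ into one half where $U$ is non-decreasing in $\theta$ and one half where it is non-increasing.

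First I would make the identification of $\Omega_{\tilde\beta}$ with the angular sector $\theta \in (\tilde\beta - \pi/2, \tilde\beta + \pi/2)$ explicit, so that the hypothesis $U_\theta \geq 0$ on $\Omega_{\tilde\beta}$ reads: for every fixed $r, \tilde x$, the map $\theta \mapsto U(r, \theta, \tilde x)$ is non-decreasing on $[\tilde\beta - \pi/2, \tilde\beta + \pi/2]$. Next I would use the symmetry assumption, which in cylindrical coordinates (recalling \eqref{lucio-1}) reads
\begin{equation}\label{plan-sym}
U(r, \theta, \tilde x) = U(r, 2\tilde\beta - \theta + \pi, \tilde x),
\end{equation}
and differentiate in $\theta$ to obtain $U_\theta(r, \theta, \tilde x) = -U_\theta(r, 2\tilde\beta - \theta + \pi, \tilde x)$. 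Using \eqref{lucio-2} with the roles interchanged, as $\theta$ varies in $[\tilde\beta + \pi/2, \tilde\beta + 3\pi/2]$ the argument $2\tilde\beta - \theta + \pi$ ranges over $[\tilde\beta - \pi/2, \tilde\beta + \pi/2]$, so the sign identity yields $U_\theta \leq 0$ on $[\tilde\beta + \pi/2, \tilde\beta + 3\pi/2]$.

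With these two monotonicity statements in hand, fix $t_0 \in (\tilde\beta - \pi/2, \tilde\beta + \pi/2]$ and set $t_0^\ast := 2\tilde\beta - t_0 + \pi$; by \eqref{lucio-2} we have $t_0^\ast \in [\tilde\beta + \pi/2, \tilde\beta + 3\pi/2)$, and the interval $[t_0, t_0^\ast]$ is split by $\tilde\beta + \pi/2$. For $t \in [t_0, \tilde\beta + \pi/2]$ the monotonicity from step one gives $U(r, t, \tilde x) \geq U(r, t_0, \tilde x)$ directly. For $t \in [\tilde\beta + \pi/2, t_0^\ast]$ the second monotonicity gives $U(r, t, \tilde x) \geq U(r, t_0^\ast, \tilde x)$, and \eqref{plan-sym} applied at $\theta = t_0$ gives $U(r, t_0^\ast, \tilde x) = U(r, t_0, \tilde x)$. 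Combining the two cases yields \eqref{lucio-4}.

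The only mildly delicate point, and the one I would be most careful about, is the bookkeeping of the angular coordinate: the identity \eqref{plan-sym} uses the ``+$\pi$'' branch of the reflection, and one must make sure that the interval $[t_0, t_0^\ast]$ sits inside a common chart of length $2\pi$ on which both monotonicity statements and the symmetry are literally true (this is precisely what \eqref{lucio-2} guarantees for the range $t_0 \in (\tilde\beta - \pi/2, \tilde\beta + \pi/2]$ assumed in the hypothesis). Once this is set up, the proof is just the two-line combination above.
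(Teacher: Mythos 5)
Your proposal is correct and follows essentially the same route as the paper: you use the antisymmetry of $U_\theta$ across $T_{\tilde\beta}$ to obtain $U_\theta\leq 0$ on $[\tilde\beta+\frac\pi2,\tilde\beta+\frac32\pi]$, split $[t_0,2\tilde\beta-t_0+\pi]$ at $\tilde\beta+\frac\pi2$ into a non-decreasing and a non-increasing piece, and close with the identity $U(r,t_0,\tilde x)=U(r,2\tilde\beta-t_0+\pi,\tilde x)$. This is precisely the paper's argument.
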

\begin{proof}
Since $U$ is symmetric, $U_{\theta}$ is antisymmetric with respect to $T_{\tilde \beta}$. By hypothesis  $U_{\theta}\geq 0$ in $\Omega_{\tilde \beta}=[x\cdot e_{\tilde \beta}>0]=\{(r,\theta,\tilde x)\,:\, \tilde \beta-\frac{\pi}2\leq \theta\leq \tilde \beta +\frac{\pi}2\}$ so that $U_{\theta}(r,\theta,\tilde x)\leq 0$ if $\tilde \beta+\frac{\pi}2\leq \theta\leq \tilde \beta +\frac 32{\pi}$. Moreover by \eqref{lucio-2}, if $\tilde \beta-\frac{\pi}2< t_0< \tilde \beta +\frac{\pi}2$
 then $h_{\tilde \beta}^+(t_0)=2\tilde \beta-t_0+\pi\in [\tilde \beta+\frac{\pi}2,\tilde \beta +\frac 32{\pi}]$. This means that $U(r,\cdot,\tilde x)$ increases in $[t_0,\tilde \beta+\frac{\pi}2]$, then decreases in $[\tilde \beta+\frac{\pi}2, \tilde \beta +\frac 32{\pi}]$, in particular in $[\tilde \beta+\frac{\pi}2,  2\tilde \beta-t_0+\pi]$. Since $U(r,t_0,\tilde x)=U(r, 2\tilde \beta-t_0+\pi,\tilde x)$,  \eqref{lucio-4} follows.
\end{proof}
\begin{proof}[Proof of Proposition \ref{proposition-A}]
Let $\beta \in [\tilde \beta-\pi,\tilde \beta]$ and let $x\in \Omega_{\beta}=[x\cdot e_{\beta}>0]$, equivalently $x=(r\cos\theta,r\sin\theta,\tilde x)$
with $\beta-\frac{\pi}2<\theta<\beta+\frac{\pi}2$. 
We have to show that $U(x)\leq U^{\sigma_{{\beta}}}(x)$ if  \ $\beta-\frac{\pi}2<\theta<\beta+\frac{\pi}2$, \ $  \tilde \beta-\pi \leq \beta \leq \tilde \beta$. \\
Let us observe that, since $U$ is symmetric with respect to $T_{\tilde \beta}$,  
$$U^{\sigma_{{\beta}}}(x)=U(\sigma_{{\tilde \beta}}(\sigma_{{\beta}}(x)))=U(r\cos(\theta+2(\tilde \beta-\beta)),r\sin(\theta+2(\tilde \beta-\beta)),\tilde x)$$ because $2\tilde \beta-(2\beta-\theta+\pi)+\pi=\theta+2(\tilde \beta-\beta)$.\\
Let us first assume that $x\in \Omega_{ \beta}\cap \Omega_{\tilde \beta}$, i.e. $x=(r\cos \theta,r\sin\theta,\tilde x)$ with \\
$( \; \beta-\frac {\pi}2\leq  \; ) \  \tilde \beta-\frac {\pi}2<\theta< \beta+\frac {\pi}2 \; ( \; \leq  \tilde \beta+\frac {\pi}2 \; )$. Then we can apply Lemma \ref{lemma-lucio} taking $t_0=\theta$, $t= \theta+2(\tilde \beta-\beta)$: we have that $\tilde \beta-\frac {\pi}2<t_0=\theta \leq \theta+2(\tilde \beta-\beta)\leq 2\tilde \beta-\theta+\pi$.
In fact $\tilde \beta-\beta\geq 0$, and the last equality is equivalent to $\theta<\beta+\frac{\pi}2 \; )$, which is true, since $x\in \Omega_{\beta}$. So from Lemma \ref{lemma-lucio} it follows that
$$U^{\sigma_{{\beta}}}(x)=U(r,\theta+2(\tilde \beta-\beta),\tilde x)\geq U(r,\theta,\tilde x)=U(x).$$
If instead $x=(r,\theta,\tilde x)\in \Omega_{\beta}\setminus\Omega_{\tilde \beta}$, i.e. 
$ ( \; \tilde \beta -\frac 32  {\pi} \leq \; ) \beta-\frac {\pi}2<\theta\leq \tilde \beta -\frac {\pi}2 \; (\leq  \beta+\frac {\pi}2   \; )$, then by \eqref{lucio-3}
$ t_0 : = 2\tilde \beta-\theta-\pi=\in [\tilde \beta -\frac {\pi}2,\tilde \beta +\frac {\pi}2]$ and $U(x)=U(r,\theta,\tilde x)=U(r,t_0,\tilde x) $, while  $U^{\sigma_{{\beta}}}(x)=U(r,\theta+2(\tilde \beta-\beta),\tilde x)$ so that, as before the inequality follows if we show that $t_0=2\tilde \beta -\theta-\pi\leq \theta+2(\tilde \beta-\beta)\leq 2\tilde \beta-t_0+\pi=\theta+2\pi$. The inequality $\theta+2(\tilde \beta-\beta)\leq \theta+2\pi$ follows from the relation $\tilde \beta-\pi\leq \beta\leq \tilde \beta$, while $2\tilde \beta -\theta-\pi\leq \theta+2(\tilde \beta-\beta)$ is equivalent to $\theta>\beta-\frac{\pi}2$, which is true since $x\in\Omega_{\beta}$. Then by Lemma \ref{lfss} also the foliated Schwarz symmetry follows.
\end{proof}
\begin{remark}\rm
From Proposition \ref{proposition-A} it follows that if $\tilde{e}$ is a symmetry direction for $U$ and for any other direction $e'\neq \tilde{e}$ in the plane $\pi(\tilde{e},e')$ generated by $\tilde{e}$ and $e'$, using the cilyndrical coordinates with respect to $\pi$, one has $U_{\theta}\geq 0$ in $\Omega(\tilde{e})$, then $U$ is foliated Schwarz symmetric, as a consequence of Lemma \ref{lfss}. This is the strategy of Proposition \ref{pf} exploiting condition \eqref{1.10}.
\end{remark}
\begin{proposition}\label{proposition-B}
Let $\tilde \beta\in \R$ and assume that $U$ is symmetric whit respect to $T_{\tilde \beta}$. Assume further that there exists $\beta_1<\tilde \beta$ such that for any $\beta\in (\beta_1,\tilde \beta)$ we have $U\leq U^{\sigma_{\beta}}$ in $\Omega_{\beta}$. Then $U_{\theta}=\frac{\de U}{\de \theta}\geq 0$ in $\Omega_{\tilde \beta}$.
\end{proposition}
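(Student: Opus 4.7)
The plan is to translate the hypothesis into cylindrical coordinates and combine it with the assumed symmetry of $U$ with respect to $T_{\tilde\beta}$, so that the reflected value $U^{\sigma_\beta}(x)$ is re-expressed as the value of $U$ at a nearby angle. This converts a family of pointwise reflection inequalities into a one-sided monotonicity statement in $\theta$, which can then be differentiated as $\beta\uparrow\tilde\beta$.

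First I would use formula \eqref{lucio-1} to rewrite the hypothesis: for $\beta\in(\beta_1,\tilde\beta)$ and $\theta\in(\beta-\frac{\pi}{2},\beta+\frac{\pi}{2})$, the inequality $U\leq U^{\sigma_\beta}$ on $\Omega_\beta$ reads
$$U(r,\theta,\tilde x)\leq U(r,2\beta-\theta+\pi,\tilde x).$$
Next, I would apply the symmetry of $U$ with respect to $T_{\tilde\beta}$, that is $U(r,\sigma,\tilde x)=U(r,2\tilde\beta-\sigma+\pi,\tilde x)$, to the right-hand side with $\sigma=2\beta-\theta+\pi$. A short computation gives $2\tilde\beta-(2\beta-\theta+\pi)+\pi=\theta+2(\tilde\beta-\beta)$, so the hypothesis becomes
$$U(r,\theta,\tilde x)\leq U\bigl(r,\theta+h,\tilde x\bigr),\qquad h:=2(\tilde\beta-\beta)>0,$$
valid for every $\theta\in(\beta-\frac{\pi}{2},\beta+\frac{\pi}{2})$.

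To conclude, fix an arbitrary point $(r,\theta_0,\tilde x)\in\Omega_{\tilde\beta}$, i.e. $\theta_0\in(\tilde\beta-\frac{\pi}{2},\tilde\beta+\frac{\pi}{2})$. For every sufficiently small $h>0$ the choice $\beta=\tilde\beta-h/2$ lies in $(\beta_1,\tilde\beta)$, and $\theta_0$ falls inside the admissible interval $(\beta-\frac{\pi}{2},\beta+\frac{\pi}{2})$, so the displayed inequality yields a nonnegative difference quotient
$$\frac{U(r,\theta_0+h,\tilde x)-U(r,\theta_0,\tilde x)}{h}\geq 0.$$
Letting $h\to 0^{+}$ gives $U_\theta(r,\theta_0,\tilde x)\geq 0$, which is the claim.

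I do not anticipate a serious obstacle: the argument is a direct one-sided differentiation of a parametric family of pointwise inequalities. The only bookkeeping to be careful about is the range of $\theta$ on which each reflection identity is valid, which is automatically handled by the formulas \eqref{lucio-2}--\eqref{lucio-3} already established and by the continuity of the map $\beta\mapsto\Omega_\beta$.
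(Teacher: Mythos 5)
Your proof is correct and follows essentially the same route as the paper: both reduce the reflection inequality to the key identity $U^{\sigma_\beta}(r,\theta,\tilde x)=U(r,\theta+2(\tilde\beta-\beta),\tilde x)$ (which you re-derive from \eqref{lucio-1} and the symmetry of $U$, while the paper cites it from the proof of Proposition \ref{proposition-A}), then divide by $h=2(\tilde\beta-\beta)$ and take a one-sided limit. There is no substantive difference between the two arguments.
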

\begin{proof}
We can write the angular derivative as
$$U_{\theta}(r,\theta,\tilde x)=\lim_{\alpha\to 0^+}\frac{U(r,\theta+\alpha,\tilde x)-U(r,\theta,\tilde x)}{\alpha}.$$
With the change of variable $\alpha=2(\tilde \beta-\beta)$, $\beta=\tilde \beta-\frac{\alpha}2$, we have that $\beta\to \tilde \beta^-$. If $\alpha$ is small then $\beta\in (\beta_1, \tilde\beta)$, and, if $x\in \Omega_{\tilde \beta}$, then $x\in \Omega_{\beta}$ definitively for $\beta\to \tilde \beta^-$. Since $U^{\sigma_{\beta}}(r,\theta,\tilde x)=U(r,\theta+2(\tilde \beta-\beta),\tilde x)$ as observed, we obtain that
\begin{align*}
&U_{\theta}(r,\theta,\tilde x)=\lim_{\beta\to \tilde \beta^-}\frac{U(r,\theta+2(\tilde \beta-\beta),\tilde x)-U(r,\theta,\tilde x)}{2(\tilde \beta-\beta)}\\
&=\lim_{\beta\to \tilde \beta^-}\frac{U^{\sigma_{\beta}}(r,\theta,\tilde x)-U(r,\theta,\tilde x)}{2(\tilde \beta-\beta)}\geq 0.
\end{align*}
\end{proof}
\begin{remark}\rm
Let us remark that if if there exists a direction $\bar e\in \S^{N-1}$ such that $W^{\bar e}(x)>0$, for any $x\in \Omega(\bar e)$ and if a rotating plane argument can be applied
to the solution $U$, then the foliated Schwarz symmetry of $U$ follows from the previous propositions. Indeed, if $e'$ is any other direction, $e'\neq \bar e$, rotating the hyperplane $T({\bar e})$ we get a symmetry hyperplane 
$ T(\tilde{e})$, with $\tilde{e}$ belonging to the $2$-dimensional plane generated by $e'$ and $\bar e$.(see the proof of Proposition \ref{rotating-plane} below for details).\\
Then Proposition \ref{proposition-B} applies and by Lemma \ref{lfss} we obtain the foliated Schwarz symmetry of $U$.\\
Let us further notice that in the case of a single equation $(m=1)$ or of sysyem but in  bounded  domains the rotating plane method can be applied
without further assumptions, so the previous remark yelds the foliated Schwarz symmetry of $U$. In the scalar case, for bounded domains this was also observed in \cite{SW}, (Corollary 1.2) by a different kind of argument.\\
In the case of systems in unbounded domains we need to work under the hypothesis of Theorem \ref{tfconvessa} or Theorem \ref{tf'convessa} to perform the rotating plane method.
We will use this procedure to prove the foliated Schwarz symmetry of the solution of \eqref{1} and \eqref{2} in Proposition \ref{rotating-plane}.\\
\end{remark}
\noindent We now give another sufficient condition for foliated Schwarz symmetry.
\begin{proposition}\label{pf}
Let $U$ be a solution of \eqref{1} and \eqref{2} such that $|\na U|\in L^2(\Omega)$, and assume that the system \eqref{1} is fully coupled along $U$ in $\Omega$. Suppose further that there exists $e\in S^{N-1}$ such that  $U$ is symmetric with respect to the hyperplane $T(e)$ and
\begin{equation}\label{1.10}
\inf_{\psi\in C^1_c(\Omega(e),\R^m)} Q_U(\psi,\Omega(e))\geq 0.
\end{equation}
Then $U$ is   foliated Schwarz symmetric.
\end{proposition}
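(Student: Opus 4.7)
The plan is to derive from \eqref{1.10} a sign dichotomy for the angular derivative $\partial U/\partial\theta$ in $\Omega(e)$ and then invoke Proposition~\ref{proposition-A} and Lemma~\ref{lfss}. Fix an auxiliary direction $e'\in S^{N-1}$ with $e'\neq \pm e$, introduce cylindrical coordinates $(r,\theta,\tilde x)$ in the two-dimensional plane spanned by $e$ and $e'$ with $e=e_{\tilde\beta}$, and set $V:=\partial U/\partial\theta$. Since $F(|x|,U)$ depends on $x$ only through $|x|$, differentiation of \eqref{1} in $\theta$ gives $L_U V=-\Delta V-J_F(|x|,U)V=0$ in $\Omega$. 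The symmetry of $U$ with respect to $T(e)$ makes $V$ antisymmetric with respect to $T(e)$, so $V=0$ on $T(e)$; and $V=0$ on $\partial\Omega$ as well, which is either empty or a sphere where $U\equiv 0$. Testing the $i$-th equation of $L_U V=0$ against $V_i\chi_{\Omega(e)}\xi_R^2$ yields
\begin{equation*}
Q_U(V\chi_{\Omega(e)}\xi_R,\Omega(e))=\sum_{i=1}^m\int_{\Omega(e)}V_i^2|\nabla\xi_R|^2\,dx.
\end{equation*}
Because $V=x_1U_{x_2}-x_2U_{x_1}$ in the chosen plane one has $|V|\leq|x||\nabla U|$, and with $|\nabla\xi_R|^2\leq CR^{-2}\chi_{B_{2R}\setminus B_R}$ this is bounded by $C\!\int_{B_{2R}\setminus B_R}|\nabla U|^2\,dx\to 0$ as $R\to\infty$, in the spirit of Lemma~\ref{l2.1}.

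The main obstacle is upgrading this approximate nullity into $V\geq 0$ or $V\leq 0$ componentwise in $\Omega(e)$, using \eqref{1.10}. Testing $L_U V=0$ instead against $V_i^\pm\chi_{\Omega(e)}\xi_R^2$ and summing over $i$ produces
\begin{equation*}
Q_U(V^\pm\chi_{\Omega(e)}\xi_R,\Omega(e))=\sum_i\int(V_i^\pm)^2|\nabla\xi_R|^2-\sum_{i\neq j}\int\tfrac{\de f_i}{\de u_j}(|x|,U)\,V_j^\mp V_i^\pm\,\xi_R^2.
\end{equation*}
The left-hand sides are nonnegative by \eqref{1.10}, cooperativity gives $\tfrac{\de f_i}{\de u_j}\geq 0$ for $i\neq j$, and the first right-hand term vanishes as $R\to\infty$, so one is forced to the pointwise identities $\tfrac{\de f_i}{\de u_j}(|x|,U)\,V_j^\mp V_i^\pm=0$ a.e.\ in $\Omega(e)$, for every $i\neq j$. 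Assume for contradiction that both $V^+$ and $V^-$ are nontrivial; partition $\{1,\dots,m\}$ into indices according to the sign behaviour of the components $V_i$ and apply the full coupling of \eqref{1} on $\Omega$ to select $i_0,j_0$ in opposite classes with $\text{meas}\{\tfrac{\de f_{i_0}}{\de u_{j_0}}(|x|,U)>0\}>0$; the pointwise identity then forces one of $V_{i_0},V_{j_0}$ to vanish on a set of positive measure in this region. A unique continuation argument for the linearized system $L_U V=0$ (or, alternatively, applying Cauchy--Schwarz to the positive semidefinite bilinear form $P_U$ associated with $Q_U$, which shows that $V$ solves the symmetrized equation $L_s V=0$ weakly in $\Omega(e)$, and then invoking the maximum principle/first-eigenfunction structure for the self-adjoint operator) promotes this vanishing to an identical vanishing in $\Omega(e)$, contradicting the class assignment of that index; iterating over all partitions furnished by full coupling closes the contradiction. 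The delicate point is handling the possibility that several components of $V$ change sign simultaneously or that full coupling routes exclusively through components that are identically zero on $\Omega(e)$, which requires a careful inductive clean-up.

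Once $V\geq 0$ or $V\leq 0$ is established in $\Omega(e)$ (with the degenerate case $V\equiv 0$ giving $U=U^{\sigma_\beta}$ for every $\beta$ parametrizing a direction of the chosen plane, by rotational invariance in that plane), Proposition~\ref{proposition-A} applied in that plane yields $U\leq U^{\sigma_\beta}$ or $U\geq U^{\sigma_\beta}$ in $\Omega_\beta$ for every such $\beta$. Since every $e''\in S^{N-1}$ belongs to some plane through $e$, the dichotomy holds for every direction of $S^{N-1}$, and Lemma~\ref{lfss} concludes that $U$ is foliated Schwarz symmetric.
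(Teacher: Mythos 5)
Your overall skeleton is correct: reduce to showing $U_\theta$ does not change sign in $\Omega(e)$, then invoke Proposition~\ref{proposition-A} and Lemma~\ref{lfss}. The preliminary computations are also sound: the identity $Q_U(U_\theta\chi_{\Omega(e)}\xi_R,\Omega(e))=\sum_i\int (U_\theta^i)^2|\nabla\xi_R|^2\,dx$ is right, the bound $|U_\theta|\leq |x||\nabla U|$ gives decay, and the identity obtained by testing against $(U_\theta^i)^\pm\chi_{\Omega(e)}\xi_R^2$ is also correct (with your sign convention). Moreover, your computation does yield the key estimate $\limsup_R Q_U(U_\theta^\pm\chi_{\Omega(e)}\xi_R,\Omega(e))\leq 0$, which is what the paper needs.

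However, the step from this estimate to the sign dichotomy of $U_\theta$ has a genuine gap, and you acknowledge it yourself. Your primary route is: derive the pointwise identities $\frac{\de f_i}{\de u_j}(U_\theta^j)^{\mp}(U_\theta^i)^{\pm}=0$ a.e., then partition $\{1,\dots,m\}$ according to sign behaviour, invoke full coupling, and use unique continuation to upgrade local vanishing to global vanishing. This does not work as stated. First, the ``partition according to sign behaviour'' is ill-defined when a component $U_\theta^i$ changes sign: such an $i$ belongs to neither side, and full coupling as defined requires a genuine partition $I\cup J=\{1,\dots,m\}$, $I\cap J=\emptyset$. Second, the pointwise identity only constrains $U_\theta$ on the set $\{\frac{\de f_{i_0}}{\de u_{j_0}}>0\}$, which is merely a set of positive measure, varying with the pair $(i_0,j_0)$; a unique continuation principle for the system $L_U U_\theta=0$ does not let you propagate vanishing of a single component on a set of positive measure to vanishing on all of $\Omega(e)$ (that would require vanishing on an open set, or of the whole vector on a set where it also vanishes to infinite order). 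Your own closing sentence admits the ``careful inductive clean-up'' is outstanding; that clean-up is exactly the unfinished part of the proof.

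The missing idea, which makes the dichotomy immediate and bypasses all of the above, is to apply Cauchy--Schwarz in the semidefinite scalar product $P_U$ (valid by \eqref{1.10}) with $v_R=\xi_R U_\theta^+$ against an arbitrary $\phi\in C^1_c(\Omega(e),\R^m)$, namely $\bigl(P_U(v_R,\phi,\Omega(e))\bigr)^2\leq Q_U(v_R,\Omega(e))\,Q_U(\phi,\Omega(e))$, and combine this with $\limsup_R Q_U(v_R,\Omega(e))\leq 0$ to conclude $P_U(U_\theta^+,\phi,\Omega(e))=0$ for every $\phi$. This shows that $U_\theta^+$ (not $U_\theta$, as you write in your parenthetical) is a weak solution of the \emph{symmetrized} linearized system $-\Delta\psi-\tfrac12(J_F+J_F^t)\psi=0$ in $\Omega(e)$. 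That system is cooperative and fully coupled in $\Omega(e)$ (since $J_F$ is), and $U_\theta^+\geq 0$, so the Strong Maximum Principle (Theorem~\ref{SMP}) gives at once that either $U_\theta^+\equiv 0$ or $U_\theta^+>0$ in $\Omega(e)$. Either way $U_\theta$ has a sign, and the rest of your argument (Proposition~\ref{proposition-A} plus Lemma~\ref{lfss}) finishes. You mention this alternative in passing, but it is actually the load-bearing step; the partition-and-unique-continuation route should be dropped.
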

\begin{proof}
First observe that the symmetry of $U$ with respect to $T(e)$ and the coupling conditions imply that the system \eqref{1} is fully coupled along $U$ also in $\Omega(e)$.\\
We want to prove the foliated Schwarz symmetry of $U$ using Proposition \ref{proposition-A} and Lemma \ref{lfss}.\\
We follow the proof of Proposition 2.5 in \cite{GPW}.
We may assume that $e = e_2 = (0, 1,\dots , 0)$, so that $T (e) = \{x \in \R^N : x_2 = 0\}$. 
We consider an arbitrary unit vector $e' \in S^{N-1}$ different from $\pm e$. After another orthogonal
transformation which leaves $e_2$ and $T (e_2 )$ invariant, we may assume that $e' = e_{\beta }=(\cos \beta, \sin \beta, 0, . . . , 0)$ for some $\beta \in
(-\frac{\pi}2 , \frac{\pi}2 )$. 
Now we consider the cylindrical coordinates $(r,\theta,\tilde x)$ introduced before. The derivative of $U$ with respect to $\theta$ - denoted by $U_{\theta}$ - extended in the origin with $U_{\theta}(0)=0$ if $\Omega=\R^N$, satisfies the linearized system
\begin{equation}\label{sist-lineariz}
\left\{
\begin{array}{ll}
-\Delta U_{\theta}^1 =\sum_{j=1}^m \frac{\de f_1}{\de u_j}(|x|,U)U_{\theta}^j & \hbox{in } \Omega(e)\\
\dots\dots\\
-\Delta U_{\theta}^m =\sum_{j=1}^m \frac{\de f_m}{\de u_j}(|x|,U)U_{\theta}^j & \hbox{in } \Omega(e)\\
U_{\theta}^1=\dots=U_{\theta}^m=0 &\hbox{ on } \de \Omega(e).
\end{array}
\right.
\end{equation}
All we need is to show  that $U_{\theta}$ does not change sign in $\Omega (e )$.
Indeed, if this is the case, Proposition \ref{proposition-A} implies that either $U \leq U_{\sigma _{e_{\beta}}}$   or  $U \geq U_{\sigma _{e_{\beta}}}$  in $\Omega (e_{\beta})$   and by the arbitrariness of $e_{\beta} $  Lemma \ref{lfss} implies the foliated Schwarz symmetry of $U$.\\
 We first note that, by \eqref{1.10}, the bilinear form
$P_U(\psi,\phi,\Omega(e))$, defined in \eqref{prod-scalare}, defines a (semidefinite)
scalar product on $C_c^1 (\Omega(e ), \R^m)\times C_c^1 (\Omega(e ), \R^m) $, and the corresponding Cauchy-Schwarz-inequality yields:
\begin{equation}\label{1.11}
\left(
P_U (\psi, \phi,\Omega(e))\right)^2\leq  P_U (\psi, \psi,\Omega(e))P_U (\phi, \phi,\Omega(e))=Q_U (\psi,\Omega(e))Q_U (\phi,\Omega(e))
\end{equation}
for any 
$\psi , \phi$ are $H^1_0 (\Omega(e ), \R^m)$-functions vanishing a.e. outside a bounded set. We consider $\xi_R$ as
defined in \eqref{eq:def-xi_R} and, for $R > 0$, we let
$$v_R \in  H^1_0 (\Omega(e ), \R^m) ,\quad
v_R (x) = \xi_R (x)U^+_{\theta} (x)\quad \hbox{ for any } x \in \Omega(e ).$$
We also fix $\phi \in C_c^1 (\Omega(e),\R^m)$. Then \eqref{1.11} yields
\begin{align}\label{nuova}
\Big( \int_{\Omega(e)}\na U_{\theta}^+ &\cdot \na \phi -\frac 12\sum_{i,j=1}^m  \big(\frac{\de f_i}{\de u_j} (|x|,U)+\frac{\de f_j}{\de u_i} (|x|,U)\big)\left( U_{\theta}^j\right)^+ \phi^i\, dx\Big)^2\nonumber\\
&= \lim_{R\to +\infty} \left(P_U (v_R , \phi,\Omega(e))\right)^2\leq Q_U (\phi, \Omega(e)) \limsup_{R\to +\infty} Q_U (v_R , \Omega(e) ).
\end{align}
Moreover, since
\begin{align*}
\int_{\Omega(e)}&|\na v_R|^2\, dx = \sum_{i=1}^m \int_{\Omega(e)}| \na \big( \xi_R \left(U_{\theta}^i\right)^+\big)|^2 \, dx\\
& =\sum_{i=1}^m  \int_{\Omega(e)}\big(\left(U_{\theta}^i\right)^+\big)^2|\na \xi_R|^2\, dx +\sum_{i=1}^m\int_{\Omega(e)}\na \left(U_{\theta}^i\right)^+\cdot \na \big( \xi_R^2 \left(U_{\theta}^i\right)^+\big)\, dx
\end{align*}
from \eqref{sist-lineariz} we have
\begin{align*}
\int_{\Omega(e)}|\na v_R|^2\, dx &-\int_{\Omega(e)}|U_{\theta}^+|^2 |\na \xi_R|^2 \, dx=\int_{\Omega(e)}\na U_{\theta}^+\cdot \na \big( \xi_R^2 U_{\theta}^+\big)\, dx\\
&
=\sum_{i=1}^m \int_{\Omega(e) \cap \{U_{\theta}^i>0\} } \na U_{\theta}^i\cdot \na \big( \xi_R^2 \left(U_{\theta}^i\right)^+\big)\, dx\\
&= \sum_{i=1}^m \int_{\Omega(e) \cap \{ U_{\theta} ^i > 0 \} } \left[-\Delta U_{\theta}^i\right] \big( \xi_R^2 \left(U_{\theta}^i\right)^+\big)\, dx\\
&=\sum_{i=1}^m \int_{\Omega(e) \cap \{ U_{\theta}^i>0\}} \sum_{j=1}^m \frac{\de f_i}{\de u_j}(|x|,U) U_{\theta}^j \left(U_{\theta}^i\right)^+\xi_R^2 \, dx\\
&=\sum_{i,j=1}^m \int_{\Omega(e) \cap \{U_{\theta}^i>0\}} \frac{\de f_i}{\de u_j}(|x|,U)\left( \xi_R U_{\theta}^j\right) ( \xi_R \left( U_{\theta}^i)^+\right) \, dx\\
&=\sum_{i,j=1}^m \int_{\Omega(e) \cap \{ U_{\theta}^i>0 \} } \frac{\de f_i} {\de u_j} (|x|,U)\big( \xi_R \big( \big( U_{\theta}^j\big)^+-\big( U_{\theta}^j\big)^-\big)\big)v_R ^i \, dx\\
&=\sum_{i,j=1}^m \int_{\Omega(e)} \frac{\de f_i} {\de u_j}(|x|,U)v_R^j v_R^i \, dx -\sum_{\substack{i,j=1\\i\neq j}}^m \int_{\Omega(e)}\frac{\de f_i} {\de u_j} (|x|,U)\xi_R \left( U_{\theta}^j\right)^-v_R^i \, dx
\end{align*}
and since $\frac{\de f_i}{\de u_j}(|x|,U)\geq 0$ for $i\neq j$, and $\xi_R \left( U_{\theta}^j\right)^-v_R^i\geq0$ then
\begin{align*}
\int_{\Omega(e)}|\na v_R|^2\, dx -&\int_{\Omega(e)}|U_{\theta}^+|^2 |\na \xi_R|^2 \, dx \leq \sum_{i,j=1}^m\int_{\Omega(e)} \frac{\de f_i}{\de u_j}(|x|,U)v_R^j v_R^i \, dx.
\end{align*}
Then
\begin{align*}
Q_U(v_R,\Omega(e))=&\int_{\Omega(e)}\big( |\na v_R|^2-\sum_{i,j=1}^m \frac{\de f_i}{\de u_j}(|x|,U)v_R^j v_R^i \big)\, dx\\
&\leq \int_{\Omega(e)}|U_{\theta}^+|^2 |\na \xi_R|^2 \, dx\leq \frac 1{R^2}\int_{B_{2R}\setminus B_R} |U_{\theta}|^2\, dx\\
&\leq \frac 1{R^2}
\int_{B_{2R}\setminus B_R} |x|^2 |\na U|^2\, dx\leq 4 \int_{B_{2R}\setminus B_R}|\na U|^2\, dx.
\end{align*}
Since $|\na U|\in  L^2 (\Omega)$ 
we conclude that $\limsup_{R\to \infty} Q_U  (v_R , \Omega(e) )\leq 0$, so that from \eqref{nuova} we have
$$\int_{\Omega(e)}\na U_{\theta}^+\cdot \na \phi -\frac 12\sum_{i,j=1}^m \big(\frac{\de f_i}{\de u_j}(|x|,U) +\frac{\de f_j}{\de u_i}(|x|,U)\big) \left( U_{\theta}^j\right)^+\phi^i\, dx=0.$$
Since $\phi \in C_c^{1} (\Omega (e), \R^m)$  was chosen arbitrarily, we conclude that $U_{\theta}^+$ is a solution of the symmetric system  associated with the linearized system, i.e. $U_{\theta}^+$ is a weak solution of
\begin{equation}\nonumber\label{lin-simm}
\left\{
\begin{array}{ll}
-\Delta \left(U_{\theta}^1\right)^+ -\frac 12 \sum_{j=1}^m \big(\frac{\de f_1}{\de u_j}(|x|,U) +\frac{\de f_j}{\de u_1}(|x|,U)\big) \left( U_{\theta}^j\right)^+=0 &\text{ in $\Omega(e )$}\\
\dots\dots \\
-\Delta \left(U_{\theta}^m\right)^+ -\frac 12 \sum_{j=1}^m \big(\frac{\de f_m}{\de u_j}(|x|,U) +\frac{\de f_j}{\de u_m}(|x|,U)\big) \left( U_{\theta}^j\right)^+=0 &\text{ in $\Omega(e )$}\\
U_{\theta}^1=\dots=U_{\theta}^m=0 &\text{ on $\de\Omega(e )$}
\end{array}\right.
\end{equation}
and, as remarked before, this symmetric system is fully coupled in $\Omega(e)$. The Strong Maximum Principle then implies that either $U_{\theta}^+\equiv 0$ or $U_{\theta}^+ >0$ in $\Omega(e)$ and this concludes the proof.
\end{proof}

\noindent Now we need to recall the following definition
\begin{definition}
Let $U$ be a $C^2(\Omega,\R^m)$ solution of \eqref{1} and \eqref{2}. We say that $U$ is stable outside a compact set $\mathcal{K}\subset \Omega$ if $Q_U(\psi,\Omega \setminus\mathcal{K})\geq 0$ for any $\psi\in C^1_c(\Omega\setminus \mathcal{K},\R^m)$.
\end{definition}
\noindent Obviously, we have
\begin{remark}\rm If $U$ has finite Morse index, then $U$ is stable outside a compact set $\mathcal{K}\subset \Omega$.
\end{remark}
\noindent Using the stability outside a compact set, we now derive, by means of a rotating plane
argument, the following proposition which guarantees the foliated Schwarz symmetry of $U$.

\begin{proposition}
\label{rotating-plane}
Let $U$ be a solution of \eqref{1} and \eqref{2} such that $|\na U|\in L^2(\Omega)$ and that $U$ is
stable outside a compact set $\mathcal{K}\subset \Omega$.
Assume that one of the following holds:
\begin{itemize}
\item[$\ast)$] suppose that assumptions $i)$, $ii)$ and $iii)$ of Theorem \ref{tfconvessa} are satisfied;
\item[$\ast\ast)$]   suppose that assumptions $i)$, and $ii)$  of Theorem \ref{tf'convessa} are satisfied.
\end{itemize}
If there exists a direction
$e\in S^{N-1}$ such that
\begin{equation}\label{2.10}
 W^e(x)>0 \quad \text{ or }\quad W^e(x)<0
\quad\quad
\text{ for any }x\in\Omega(e),
\end{equation}
then $U$ is foliated Schwarz symmetric.
\end{proposition}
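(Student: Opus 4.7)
The plan is to implement a rotating-plane argument in a fixed $2$-plane, produce a symmetry hyperplane for $U$, and then combine Propositions \ref{proposition-A}, \ref{proposition-B} and Lemma \ref{lfss} to obtain foliated Schwarz symmetry. Without loss of generality assume $W^e>0$ in $\Omega(e)$: the case $W^e<0$ reduces to this after replacing $e$ by $-e$ and using $W^{-e}=W^e$, $\Omega(-e)=\sigma_e(\Omega(e))$, together with the antisymmetry of $W^e$ across $T(e)$. Fix an arbitrary $e'\in S^{N-1}$ with $e'\neq\pm e$; in the $2$-plane spanned by $e$ and $e'$ parametrize unit vectors by angle so that $e_0=e$ and $e_\pi=-e$. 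The aim is to produce $\tilde\alpha\in(0,\pi)$ such that $U$ is symmetric with respect to $T(e_{\tilde\alpha})$ and such that $W^{e_\beta}\geq 0$ on $\Omega(e_\beta)$ for every $\beta\in[0,\tilde\alpha)$.

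Define $\tilde\alpha:=\sup\{\alpha\in[0,\pi]:W^{e_\beta}\geq 0\text{ in }\Omega(e_\beta)\text{ for all }\beta\in[0,\alpha]\}$. Continuous dependence of $W^{e_\beta}$ on $\beta$ on compact subsets passes the defining property to the supremum, so $W^{e_{\tilde\alpha}}\geq 0$ on $\Omega(e_{\tilde\alpha})$; antisymmetry across $T(e)$ forces $\tilde\alpha<\pi$, since otherwise $W^{e_\pi}=W^e\geq 0$ on $\Omega(-e)$ would contradict $W^e>0$ on $\Omega(e)$. By Lemma \ref{l-differenza}, $W^{e_{\tilde\alpha}}$ is a nonnegative weak solution of the fully coupled cooperative linear system \eqref{diff-1} under hypothesis $\ast)$, or of \eqref{diff-2} under $\ast\ast)$, in $\Omega(e_{\tilde\alpha})$. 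The Strong Maximum Principle (Theorem \ref{SMP}) gives the dichotomy $W^{e_{\tilde\alpha}}\equiv 0$ or $W^{e_{\tilde\alpha}}>0$ strictly on $\Omega(e_{\tilde\alpha})$; in the first case $T(e_{\tilde\alpha})$ is the desired symmetry hyperplane, and the bulk of the work is excluding the second.

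Assume for contradiction that $W^{e_{\tilde\alpha}}>0$ strictly. By maximality choose $\alpha_n\downarrow\tilde\alpha$ with $(W^{e_{\alpha_n}})^-\not\equiv 0$ on $\Omega(e_{\alpha_n})$. Since $W^{e_{\alpha_n}}\to W^{e_{\tilde\alpha}}>0$ in $C^1_{\mathrm{loc}}$ and Hopf's lemma applies at $\partial\Omega(e_{\tilde\alpha})$, the support $D_n$ of $(W^{e_{\alpha_n}})^-$ lies eventually in $\Omega(e_{\alpha_n})\setminus\mathcal{K}$. Now test an appropriate truncation of $(W^{e_{\alpha_n}})^-\chi_{D_n}\xi_R$ in the relevant quadratic form. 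Under $\ast)$, Lemma \ref{l2.2}\,i) (after flipping the sign of $W^{e_{\alpha_n}}$ on $D_n$ so that the restricted function is nonnegative and the cooperative system \eqref{diff-1} still applies, using antisymmetry) yields $\limsup_R Q_U(\,\cdot\,,\Omega(e_{\alpha_n}))\leq 0$; stability of $U$ outside $\mathcal{K}$ then gives the reverse inequality because the test function is supported outside $\mathcal{K}$. Under $\ast\ast)$, Lemma \ref{l2.2}\,ii) yields the analogous upper bound for $Q_{es}$, while the comparison $b_{ij}^e\geq b_{ij}^{e,s}$ from Lemma \ref{l-differenza}\,ii) together with the convexity hypothesis relates $Q_{es}$ to $Q_U$, so stability outside $\mathcal{K}$ again supplies the reverse inequality. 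In both cases the resulting equality forces the test function into the kernel of the form; the Strong Maximum Principle for the fully coupled systems, combined with the small-domain maximum principle (Theorem \ref{p-di-max-domini-piccoli}) applied to the portions of $D_n$ adjacent to $T(e_{\alpha_n})$, upgrades this to $(W^{e_{\alpha_n}})^-\equiv 0$, contradicting the choice of $\alpha_n$.

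Having obtained a symmetry direction $\tilde e=e_{\tilde\alpha}$ for $U$, together with the one-sided comparison $W^{e_\beta}\geq 0$ on $\Omega(e_\beta)$ for every $\beta<\tilde\alpha$ sufficiently close to $\tilde\alpha$, Proposition \ref{proposition-B} (read in the cylindrical coordinates adapted to the chosen $2$-plane, with the orientation of rotation reversed if necessary to match its sign convention) gives $U_\theta\geq 0$ on $\Omega(\tilde e)$. Proposition \ref{proposition-A} then yields either $U\leq U^{\sigma_{e'}}$ or $U\geq U^{\sigma_{e'}}$ on $\Omega(e')$; since $e'$ was an arbitrary unit vector distinct from $\pm e$, Lemma \ref{lfss} delivers the foliated Schwarz symmetry of $U$. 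The main obstacle is the continuation step in the third paragraph: cleanly combining Lemma \ref{l2.2}, the stability of $U$ outside $\mathcal{K}$, the Strong Maximum Principle, and the small-domain maximum principle to rule out strict positivity of $W^{e_{\tilde\alpha}}$ is the technically most delicate piece, and is where the two sets of hypotheses $\ast)$ and $\ast\ast)$ enter through their respective linear systems \eqref{diff-1}, \eqref{diff-2} and quadratic forms $Q_U$, $Q_{es}$.
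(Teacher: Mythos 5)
Your proof follows the same rotating-plane strategy as the paper: fix a two-plane, define the extremal angle, show the extremal reflection is an exact symmetry by combining Hopf's lemma, the small-domain maximum principle, the stability outside $\mathcal{K}$ with Lemma~\ref{l2.2} and a Cauchy--Schwarz argument, and then conclude via Propositions~\ref{proposition-B}, \ref{proposition-A} and Lemma~\ref{lfss}. The high-level structure is the same, but there is a genuine gap in the continuation step under hypothesis~$\ast)$. Lemma~\ref{l2.2}\,i) is stated for $W^e\geq 0$ on $D$ precisely because the matrix inequalities \eqref{b_ii}, \eqref{b_ij} of Lemma~\ref{l-differenza}\,i) require $u_i(x)\geq u_i^{\sigma_e}(x)$: they come from the monotonicity of $\partial f_i/\partial u_j$ and reverse direction when $W^e\leq 0$. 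Thus ``flipping the sign'' of $W^{e_{\alpha_n}}$ on $D_n$ does \emph{not} put you in a position to invoke the lemma --- the flipped function still satisfies the linear system \eqref{diff-1}, but now $\tilde b_{ij}^e$ satisfies $\tilde b_{ij}^e(x)\leq -\partial f_i/\partial u_j(|x|,U(x))$ on $D_n$, which is the wrong inequality for the quadratic-form estimate. Your parenthetical ``using antisymmetry'' suggests moving to the reflected half $\Omega(-e_{\alpha_n})$, where $W^{e_{\alpha_n}}\chi_{\sigma(D_n)}\geq 0$; this can be made to work, but the change of variables then produces $Q_{U^{\sigma_e}}$ rather than $Q_U$, and one must additionally transfer the stability-outside-$\mathcal{K}$ hypothesis. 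The paper avoids all of this by choosing the opposite normalization $W^e<0$ at the outset, so that the rotating-plane supremum yields $W^{\tilde\beta+\varepsilon}\leq 0$ and the natural test function $(W^{\tilde\beta+\varepsilon})^+$ is directly nonnegative.

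A second, smaller imprecision: you place the small-domain maximum principle (Theorem~\ref{p-di-max-domini-piccoli}) at the very end, ``applied to the portions of $D_n$ adjacent to $T(e_{\alpha_n})$,'' and you never invoke unique continuation. In the argument the proposition actually requires, the small-domain principle is used \emph{before} the quadratic-form computation, and only in the exterior-domain case, near the inner boundary $\partial B$ inside a fixed ball $B_{R_1}$ --- because there the Hopf/mean-value argument perpendicular to $T_{\tilde\beta}$ gives no information. The final conclusion $v\equiv 0$ is obtained from the fact that $v$ is a nonnegative weak solution of the symmetric linear system on the connected set $\Omega_{\tilde\beta+\varepsilon}\setminus B_{R_0}$ which already vanishes on the open annulus $(B_{R_1}\setminus B_{R_0})\cap\Omega_{\tilde\beta+\varepsilon}$: this is unique continuation, or equivalently the strong maximum principle for a nonnegative solution vanishing on an open set. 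Your sketch collapses these distinct steps into one sentence, and in doing so omits the crucial intermediate claim $W^{\tilde\beta+\varepsilon}\leq 0$ in $B_{R_1}\cap\Omega_{\tilde\beta+\varepsilon}$ (not merely in $\mathcal{K}$), which is what makes the unique-continuation conclusion possible.
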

\begin{proof}
Let us assume that $e=(1,0,\dots,0)$ and that $W^e<0$ in $\Omega(e)$. We consider an arbitrary unitary vector $e'\in S^{N-1}$ different from $\pm e$. 
We want to show that either $U\geq U^{\sigma_{e'}}$ in $\Omega(e')$ or $U\leq U^{\sigma_{e'}}$ in $\Omega(e')$. Then, since the vector $e'$ is arbitrary, the foliated Schwarz symmetry follows from Lemma \ref{lfss}.\\
After an orthogonal change of variable that leaves $e=e_1$ invariant we can assume $e'=e_{\beta}=(\cos \beta,\sin \beta,0,\dots,0)$ for some $\beta\in (0,\pi)$. We set $e_{\beta}=(\cos \beta,\sin\beta,0,\dots,0)$ for $\beta\geq 0$, so that $e=e_0$.
As before we write in short
$$\Omega_{\beta}:= \Omega(e_\beta)=\{x\in \Omega\,:\, x_1\cos \beta +x_2\sin
\beta>0\}\quad \hbox{ and } $$
$$W^{\beta}:=W^{e_{\beta}}\quad, \quad T_{\beta}:=T(e_{\beta}) .$$
Then we start rotating planes and we define
$$\tilde{ \beta}=\sup \{\beta \in [0,\pi)\: :\:
\text{$W^{\beta'}\leq 0$ in  $\Omega_{\beta'}$ for all $\beta'\in
  [0,\beta)$}\}.
$$
Our aim is to show that $W^{\tilde{\beta}}\equiv 0$ in $\Omega_{\tilde{\beta}}$.\\
Indeed in this case we can apply Proposition \ref{proposition-B} getting that the angular derivative $U_{\theta}$ in the cylindrical coordinates $(r,\theta,\tilde x)$ is nonnegative. Then Propositon \ref{proposition-A} and Lemma \ref{lfss} give the foliated Schwarz symmetry of $U$.\\
We observe that, by continuity, $W^{\tilde{\beta}}\leq 0$ and
hence
$\tilde{\beta}<\pi$, because $W^{\pi}=-W^0>0$ in $\Omega_{\pi}=-\Omega_0$. \\
Arguing by contradiction we assume that  $W^{\tilde{\beta}}\not\equiv 0$. The function $W^{\tilde{\beta}}$ satisfies both the linear systems
 \eqref{diff-2} and \eqref{diff-1} in $\Omega_{\tilde{\beta}}$. Moreover
if $\ast)$ or  $\ast\ast)$ are satisfied then these linear systems are fully coupled in $\Omega_{\tilde{\beta}}$ by Lemma \ref{l-differenza}.
So by the strong maximum principle  $W^{\tilde{\beta}}<0$ in
$\Omega_{\tilde{\beta}}$.
Moreover, applying the  Hopf's Lemma   on the hyperplane $T_{\tilde{\beta}}$, where
$W^{\tilde{\beta}}$ vanishes, we have, by  Theorem \ref{SMP}
\begin{equation}
\frac{\de W^{\tilde{\beta}}}
{\de e_{\tilde{\beta}}}(x)<0 \quad \text{ for any }\, x\in T_{\tilde{\beta}}\cap \Omega.
\label{2.12}
\end{equation}
Since, by hypotheses, $U$ is stable outside a compact set, there exists
$R_0>0$ such that
\begin{equation}
  \label{eq:extra-cs-0}
Q_U(\psi,\Omega\setminus B_{R_0})\geq 0 \qquad \text{for every $\psi\in
  C^1_c(\Omega\setminus B_{R_0},\R^m)$.}
\end{equation}
We fix $R_1>R_0$, and we claim that there exists $\varepsilon_1>0$ such that
\begin{equation}\label{2.13}
W^{\tilde{\beta} +\e}(x)\leq 0\quad \hbox{ in }B_{R_1}\cap
\Omega_{\tilde{\beta}+\e} \,\, \forall\, \e \in [0,\e_1).
\end{equation}
In the case $\Omega =\R^N\setminus B$, let $B_{\delta}$ be a neighborhood of $\de B$ in $\Omega$ of small
measure to allow the maximum principle to hold in $B_{\delta}$
for the operator $-\Delta+\widetilde B^{{\tilde{\beta}+\e}}(x)$  in case $\ast)$ is satisfied or for the operator $-\Delta+ B^{{\tilde{\beta}+\e}}(x)$ in case $\ast\ast)$ holds, for sufficiently small
$\e>0$, see Theorem \ref{p-di-max-domini-piccoli}. We first show that
 \begin{equation}\label{2.13a}
W^{\tilde{\beta} +\e}(x)\leq 0\quad \hbox{ in }B_{R_1}\cap\left(
\Omega_{\tilde{\beta}+\e}\setminus B_{\delta}\right) \,\, \forall\, \e\in [0,\e_1).
\end{equation}
If (\ref{2.13a}) is not true, we have sequences $\e_n\to
0$ and  $x_n \in B_{R_1}\cap
\left(\Omega_{\tilde{\beta}+\e_n}\setminus B_{\delta}\right)$ such that
$W^{\tilde{\beta}+\e_n}(x_n)
>0$. After passing to a subsequence,
$x_n\to x_0\in \overline{B_{R_1}\cap \left(
  \Omega_{\tilde{\beta}}\setminus B_{\delta}\right)}$ and
$W^{\tilde{\beta}}(x_0)=0$, hence  $x_0\in T_{\tilde{\beta}}$.
Since $W^{\tilde{\beta}+\e_n}(x)=0$ on
$T_{\tilde{\beta}+\e_n}$ and \mbox{$W^{\tilde{\beta}+\e_n}(x_n)>0$}, there should be points
$\xi_n$ on the line segment joining $x_n$ with $T_{\tilde{\beta}+\e_n}$ and
perpendicular to $T_{\tilde{\beta}+\e_n}$,  such that
$\frac{\de W^{\tilde{\beta}+\e_n}}{\de e_{\tilde{\beta}+\e _n}}(\xi_n)>0$.
Passing to the limit we get $\frac{\de W^{\tilde{\beta}}}
{\de e_{\tilde{\beta}}}(x_0)\geq 0$ in contradiction with
(\ref{2.12}). So we get (\ref{2.13a}).\\
By the  maximum principle, the definition of $B_{\delta}$ and
(\ref{2.13a}) we get $W^{\tilde{\beta}+\e}\leq 0$, under both assumptions $\ast)$ and $\ast\ast)$, also in $B_{R_1}\cap
\Omega_{\tilde{\beta}+\e}\cap  B_{\delta}$ and hence (\ref{2.13})
holds.\\
If $\Omega=\R^N$ by the same argument, directly from (\ref{2.12}) we get (\ref{2.13}).\\
Now we want to prove that
\begin{equation}\label{2.14}
W^{\tilde{\beta}+\e} \leq 0 \quad \hbox{ in }\Omega_{\tilde{\beta}+\e} \quad
\text{for all $\e\in [0,\e_1)$.}
\end{equation}
Because $R_0<R_1$, by (\ref{2.13}) the function
$v:=(W^{\tilde{\beta} +\varepsilon})^+\: \chi_{\stackrel{}{\Omega_{\tilde{\beta}+\e}}}$ has its support strictly contained
in $\Omega_{\tilde{\beta}+\e}\setminus B_{R_0}$. We claim that
\begin{equation}
  \label{eq:equiv0}
v \equiv 0.
\end{equation}
We first consider the case where assumption $\ast)$ is satisfied.
Let $\phi \in
C_c^\infty(\Omega_{\tilde{\beta}+\e}\setminus B_{R_0},\R^m)$. By \eqref{eq:extra-cs-0}, the bilinear form
$P_U$, defined in \eqref{prod-scalare} defines a (semidefinite) scalar product on
$C^1_c(\Omega\setminus B_{R_0},\R^m)$ and also on $C^1_c(\Omega_{\tilde{\beta}+\e}\setminus B_{R_0},\R^m)$, and the corresponding
Cauchy-Schwarz-inequality yields
$$
\left(P_U(\psi,\varphi,\Omega_{\tilde{\beta}+\e}\setminus B_{R_0})\right)^2 \le P_U(\psi,\psi,\Omega_{\tilde{\beta}+\e}\setminus B_{R_0}) P_U(\varphi,\varphi,\Omega_{\tilde{\beta}+\e}\setminus B_{R_0})$$
for
  all 
$\psi, \varphi$ in $H^1_0(\Omega_{\tilde{\beta}+\e}
\setminus B_{R_0},\R^m)$ that  vanish a.e. outside a bounded
set. Consequently, we obtain
\begin{equation}
  \label{eq:cs-new-new}
\left(P_U(v_R,\phi, \Omega_{\tilde{\beta}+\e}\setminus B_{R_0})\right)^2 \le Q_U(v_R, \Omega_{\tilde{\beta}+\e}\setminus B_{R_0})\: Q_U (\phi,\Omega_{\tilde{\beta}+\e}\setminus B_{R_0}) \qquad
\text{for $R>0$},
\end{equation}
where $v_R=v\xi_R$ and $\xi_R$ is defined in \eqref{eq:def-xi_R}. The function $v$ is nonnegative in  $ \Omega_{\tilde{\beta}+\e}$, so we are in position to apply Lemma~\ref{l2.2}, part $i)$, getting from  \eqref{Q_u<0}
$$
\limsup_{R \to \infty} Q_U(v_R, \Omega_{\tilde{\beta}+\e}\setminus B_{R_0}) \le 0.
$$
Combining this with \eqref{eq:cs-new-new}, we find that
\begin{align*}
\int_{\Omega_{\tilde{\beta}+\e}\setminus B_{R_0}}\sum_{i=1}^m \nabla v_i \cdot\nabla \phi_i -\frac 12 \sum_{i,j=1}^m\left( \frac{\de f_i}{\de u_j}(|x|,U)+\frac{\de f_j}{\de u_i}(|x|,U)\right)  v_j
\phi_i\,\:dx \\
= \lim_{R \to \infty}P_U(v_R,\phi,\Omega\setminus B_{R_0})= 0.
\end{align*}
Since $\phi \in C_c^\infty(\Omega_{\tilde{\beta}+\e}\setminus B_{R_0},\R^m)$ was chosen
arbitrarily, we conclude that $v$ is a weak solution of the linear symmetric system
$$
-\Delta v_i -\frac 12 \sum_{j=1}^m \left( \frac{\de f_i}{\de u_j}(|x|,U)+\frac{\de f_j}{\de u_i}(|x|,U)\right)  v_j = 0 \qquad \text{in $\,\,\Omega_{\tilde{\beta}+\e}\setminus B_{R_0}$}
$$
for $i=1,\dots,m$. Then however $v=(W^{\tilde{\beta} +\varepsilon})^+\chi_{ \Omega_{\tilde{\beta} +\varepsilon}} \equiv 0$ by the unique continuation principle, since $(W^{\tilde{\beta}+\varepsilon})
^+ \equiv 0$ in $B_{R_1} \cap \Omega
_{\tilde{\beta}+\e}$ by (\ref{2.13}) and $R_1>R_0$. Hence \eqref{eq:equiv0} holds.\\[.5cm]
Next we consider the case where hypothesis $\ast\ast)$ holds.
Since every function $\tau \in
C^1_c(\Omega_{\tilde{\beta}+\e}\setminus B_{R_0},\R^m)$ can be extended to an
odd function $\tilde{\tau} \in C^1_c(\Omega\setminus B_{R_0},\R^m)$ with
respect to the
reflection at $T_{\tilde{\beta} +\varepsilon}$, we have by \eqref{eq:extra-cs-0}:
$$
Q_{e_{\tilde{\beta}+\varepsilon}s}
(\tau,\Omega_{\tilde{\beta}+\e}\setminus B_{R_0} )=\frac 12 Q_U(\tilde{\tau},\Omega\setminus B_{R_0} )\geq 0$$
for all $\tau \in C^1_c(\Omega_{\tilde{\beta}+\e}\setminus B_{R_0},\R^m)$.
Hence the bilinear form $P_{e_{\tilde{\beta}+\e}s}$ associated with $Q_{e_{\tilde{\beta}+\varepsilon}s}$ defines a (semidefinite) scalar product on
$C^1_c(\Omega_{\tilde{\beta}+\e} \setminus B_{R_0},\R^m)$, and the corresponding
Cauchy-Schwarz-inequality reads
\begin{align}
  \label{eq:chauchy-schwarz-2}
\Big(P_{e_{\tilde{\beta}+\e}s}(&v_R,\phi,\Omega_{\tilde{\beta}+\e}\setminus B_{R_0})\Big)^2 \le  Q_{e_{\tilde{\beta}+\e}s}(\phi,\Omega_{\tilde{\beta}+\e}\setminus B_{R_0})
Q_{e_{\tilde{\beta}+\e}s}(v_R,\Omega_{\tilde{\beta}+\e}\setminus B_{R_0})\nonumber\\
&\leq Q_{e_{\tilde{\beta}+\e}s}(\phi,\Omega_{\tilde{\beta}+\e}\setminus B_{R_0})\limsup_{R\to +\infty}Q_{e_{\tilde{\beta}+\e}s}(v_R,\Omega_{\tilde{\beta}+\e}\setminus B_{R_0})
\end{align}
for $v_R:=v\xi_R$ and for any $\phi \in
C_c^1(\Omega_{\tilde{\beta}+\e}\setminus B_{R_0},\R^m)$.
Using $ii)$ of Lemma \ref{l2.2} with $D=\Omega_{\tilde{\beta}+\e}\setminus B_{R_0}$ then we have
$$\limsup_{R\to +\infty}Q_{e_{\tilde{\beta}+\e}s}(v_R,\Omega_{\tilde{\beta}+\e}\setminus B_{R_0})\leq 0.$$
Then, from  \eqref{eq:chauchy-schwarz-2} it follows that
$$\int_{\Omega_{\tilde{\beta}+\e}\setminus B_{R_0}}\sum_{i=1}^m \na v_i\cdot \na \phi_i+\frac 12\sum_{i,j=1}^m (b_{ij}^{e_{\tilde{\beta}+\e}s}(x)+b_{ji}^{e_{\tilde{\beta}+\e}s}(x))v_j\phi_i\, dx=0$$
for any $\phi \in
C_c^1(\Omega_{\tilde{\beta}+\e}\setminus B_{R_0},\R^m)$. Then $\left(W^{\tilde{\beta}+\e}\right)^+$ is a solution of the linear symmetric system
$$-\Delta w_i^++\frac 12\sum_{j=1}^m \left(b_{ij}^{e_{\tilde{\beta}+\e}s}(x)+ b_{ji}^{e_{\tilde{\beta}+\e}s}(x)\right)  w_j^+=0\,,\, \text{ in } \Omega_{\tilde{\beta}+\e}\setminus B_{R_0}$$
for $i=1,\dots,m$.  Then, the unique continuation principle, implies that
$v\equiv 0$ in $\Omega_{\tilde{\beta}+\e}\setminus B_{R_0}$ and \eqref{eq:equiv0} holds also in this case.\\[.5cm]
As a consequence of \eqref{eq:equiv0}, we have got (\ref{2.14}). Then the definition of $\tilde{\beta}$ implies that
  $W^{\tilde{\beta}}\equiv 0$ in $\Omega_{\tilde \beta}$ and this gives the claim.

\end{proof}

\section{\textbf{Proofs of Theorem \ref{tfconvessa} and Theorem \ref{tf'convessa}}}\label{se:4}
\begin{proposition}\label{lf}
Let $U$ be a solution of \eqref{1} and \eqref{2} with Morse index $\mu(U)\leq N$ and assume that the system is fully coupled along $U$ in $\Omega(e)$ for any $e\in S^{N-1}$.
Then there exists a direction $e\in S^{N-1}$ such that
\begin{equation}\label{lll}
Q_U(\psi,\Omega(e))\geq 0\quad \hbox{ for any }\psi \in C^1_c(\Omega(e), \R^m).
\end{equation}
\end{proposition}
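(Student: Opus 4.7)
I argue by contradiction: assume that
\[
\inf_{\psi \in C^1_c(\Omega(e),\R^m)} Q_U(\psi,\Omega(e)) < 0 \qquad \text{for every } e \in S^{N-1},
\]
and seek to exhibit an $(N+1)$-dimensional subspace on which $Q_U$ is negative definite, contradicting $\mu(U) \leq N$.

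By Lemma \ref{ln2}, applied with the matrix $A(x) = -J_F(|x|,U(x))$ that induces $Q_U$, there exists $R_0 > 0$ such that for every $R \geq R_0$ and every $e \in S^{N-1}$ the first symmetric eigenvalue $\lambda_1^s(e) := \lambda_1^s(L_A, B_R \cap \Omega(e))$ is strictly negative. I fix such $R$ also large enough that Lemma \ref{ln1} applies; then, since full coupling passes to the symmetrized system (remark after Definition \ref{coupled}), $\lambda_1^s(e)$ is simple on $B_R \cap \Omega(e)$ and admits a componentwise strictly positive eigenfunction $\phi_e$, normalized by $\|\phi_e\|_{L^2} = 1$. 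The extension $\tilde\phi_e$ of $\phi_e$ by zero then satisfies $Q_U(\tilde\phi_e,\Omega) = \lambda_1^s(e) < 0$. The decisive property is that $\tilde\phi_e$ and $\tilde\phi_{-e}$ have essentially disjoint supports (one in $\Omega(e)$, the other in $\Omega(-e)$), so all cross terms vanish and
\[
Q_U(\tilde\phi_e - \tilde\phi_{-e}) = Q_U(\tilde\phi_e) + Q_U(\tilde\phi_{-e}) < 0.
\]

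Next I choose a subspace $V_-$ of maximal dimension $\mu(U) \leq N$ on which $Q_U$ is negative definite; its $P_U$-orthogonal complement is $Q_U$-nonnegative by maximality of $V_-$. Writing $P_{V_-}$ for the projection onto $V_-$ with respect to the (non-degenerate) restriction of $P_U$ to $V_-$, I define the continuous odd map
\[
F \colon S^{N-1} \to V_-, \qquad F(e) := P_{V_-}\bigl(\tilde\phi_e - \tilde\phi_{-e}\bigr),
\]
continuity in $e$ coming from spectral perturbation for simple eigenvalues. A zero $F(e^*) = 0$ would place $\tilde\phi_{e^*} - \tilde\phi_{-e^*}$ in the $P_U$-complement of $V_-$, where $Q_U$ is nonnegative, contradicting the strict negativity above and yielding the desired $(\mu(U)+1)$-dimensional negative-definite subspace $V_- \oplus \R(\tilde\phi_{e^*} - \tilde\phi_{-e^*})$.

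\textbf{Main obstacle.} Producing the zero $e^*$ of $F$ is the heart of the argument. When $\dim V_- \leq N-1$, the Borsuk--Ulam theorem applied to the odd continuous map $F$ into a space of dimension at most $N-1$ gives $e^*$ at once. The borderline case $\dim V_- = N$, where source and target of $F$ have the same dimension, requires a finer topological input, which I plan to handle by a Ljusternik--Schnirelmann genus computation on the quotient $\R P^{N-1} \simeq S^{N-1}/\{\pm 1\}$ (the natural parameter space for the antipodally-paired family $\{\tilde\phi_e - \tilde\phi_{-e}\}$), exploiting $\gamma(S^{N-1}) = N$ to force a zero of $F$. A subordinate but nontrivial point is justifying the continuous dependence of $\phi_e$ on $e$ as the domain $B_R \cap \Omega(e)$ moves, which I will establish by a $\Gamma$-convergence argument for the Rayleigh quotients, relying on the simplicity of $\lambda_1^s(e)$.
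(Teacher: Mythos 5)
Your overall strategy—argue by contradiction, invoke Lemma \ref{ln2} to get strictly negative first symmetric eigenvalues $\lambda_1^s$ on each half-ball $B_R\cap\Omega(e)$, use the corresponding positive eigenfunctions $\phi_e$ to build an odd family, and invoke Borsuk--Ulam to kill off the negative directions—is the same as the paper's. But there is a genuine gap at precisely the point you flag as the ``main obstacle,'' and the fix you sketch does not work.

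The issue is the case $\mu(U)=N$. Here you would need a zero of an odd continuous map $F\colon S^{N-1}\to\R^N$, and no topological tool will give you one for free: the antipodal inclusion $S^{N-1}\hookrightarrow\R^N$ is odd and never vanishes. The Ljusternik--Schnirelmann genus computation $\gamma(S^{N-1})=N$ you propose to ``exploit'' is \emph{equivalent} to Borsuk--Ulam; it shows a nowhere-zero odd map into $\R^m$ forces $m\geq N$, which is exactly the boundary you are on, not past it. So the genus argument is circular and cannot close the gap.

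What the paper does instead is lower the target dimension by one \emph{before} appealing to Borsuk--Ulam, by choosing the relative weights of $\phi_e$ and $\phi_{-e}$ so that the resulting antisymmetric function $\Psi_e$ is, by construction, $L^2$-orthogonal to the first eigenfunction $\Phi_1$ of the linearized operator on $\Omega\cap B_R$. Concretely, with $\Psi_e = a_e\,\tilde\phi_e - b_e\,\tilde\phi_{-e}$ and the ratio $a_e/b_e$ chosen so that $(\Psi_e,\Phi_1)_{L^2}=0$ (possible because $\Phi_1$ and each $\phi_{\pm e}$ are of one sign, so the inner products are nonzero), the remaining orthogonality constraints involve only $\Phi_2,\dots,\Phi_j$. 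The relevant odd map is thus $h(e)=((\Psi_e,\Phi_2),\dots,(\Psi_e,\Phi_j))\colon S^{N-1}\to\R^{j-1}$, and since $j-1\leq N-1<N$, Borsuk--Ulam applies unconditionally. This one-dimensional reduction is precisely the missing idea; your $F(e)=P_{V_-}(\tilde\phi_e-\tilde\phi_{-e})$ throws away the freedom to rescale the two pieces independently and therefore cannot exploit the automatic orthogonality to $\Phi_1$. Secondary remarks: the paper works concretely with the symmetric eigenfunctions $\Phi_1,\dots,\Phi_j$ of $L_U$ on $\Omega\cap B_R$ rather than an abstract maximal negative subspace $V_-$, which makes the projection and the final inequality $Q_U(\Psi_e,\Omega\cap B_R)/\|\Psi_e\|^2\geq\lambda_{j+1}^s\geq 0$ transparent; and the very small Morse index cases ($\mu(U)<2$) are dispatched separately by an elementary argument, which you may want to mirror.
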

\begin{proof}
The case $\mu(U)< 2$ is immediate. Indeed
at least one among $Q_U(-,\Omega(e))$ and $Q_U(-,\Omega(-e))$ should be positive semidefinite, otherwise we would obtain a $2$-dimensional subspace of $ C^1_c(\Omega, \R^m)$ where the quadratic form $Q_U(-,\Omega)$ is negative definite, contradicting the assumption of Morse index less than $2$.\\
So let us assume $2\leq j:=\mu(U)\leq N$. By definition, $j$ is the maximal dimension of a subspace $X_j:=span\{\Psi_1,\dots,\Psi_j\}\subset C^1_c(\Omega,\R^m)$ such that $Q_U(\psi,\Omega)<0$ for any $\psi \in X_j\setminus \{0\}$. We take a ball $B_{\rho}$ with radius $\rho>0$ sufficiently large to contain the supports of all $\Psi_i$, $i=1,\dots,j$. For $R\geq \rho$, in the domain $\Omega\cap B_{R}$ the linearized operator $L_U$ defined in \eqref{linearized} has exactly $j$ negative symmetric eigenvalues, $\l_1^s(L_U,\Omega\cap B_{R})<\l_2^s(L_U,\Omega\cap B_{R})\leq\dots\leq\l_j^s(L_U,\Omega\cap B_{R})$  with respect to Dirichlet boundary conditions, and
$\l_{j+1}^s(L_U,\Omega\cap B_{R})\geq 0$. See Section \ref{se:2} for the definition of the symmetric eigenvalues and \cite{DAPA} for their variational characterization.\\
Now assume, arguing by contradiction, that  for any $e\in S^{N-1}$ \eqref{lll} does not hold. Then, we can apply Lemma \ref{ln2} to the linear operator $L_U$ and
we can find a $\tilde R>0$ such that, for any $R\geq \tilde R$ and  for any $e\in S^{N-1}$ the first symmetric eigenvalue
$\l_1^s(L_U, \Omega(e)\cap B_R)$ of the linearized operator in $\Omega(e)\cap B_R$, with
zero Dirichlet boundary condition is negative. \\
We can take $R\geq \max\{\rho, \tilde R,\bar R\}$ where $\bar R$ is as in Lemma \ref{ln1}.
In this way we have that the linearized system, defined in \eqref{linearized}, is fully coupled in $\Omega(e) \cap B_R$ for any $e\in S^{N-1}$  and the same holds for the symmetric system 
associated with the linearized operator in $\Omega(e) \cap B_R$.
We denote by $\Phi_e$ the first positive $L^2$-normalized eigenfunction of the symmetric system $-\Delta -\frac 12 \left(J_F(|x|,U(x))+J_F^t(|x|,U(x)\right)$ in $\Omega(e)\cap B_R$ (we observe that $\Phi_e$ is uniquely determined since the system is fully coupled in $\Omega(e)\cap B_R$ for any $e\in S^{N-1}$) and by $\Phi_1,\dots,\Phi_j$ the mutually orthogonal eigenfunctions corresponding to the $j$ negative symmetric eigenvalues of $L_U$ in $\Omega \cap B_R$.
Define
$$\Psi_e(x)=\left\{\begin{array}{ll}
 \left(\frac{(\Phi_{-e},\Phi_1)_{L^2(\Omega(-e))}}{(\Phi_{e},\Phi_1)_{L^2(\Omega(e))}}\right)^{\frac 12}\Phi_e(x) & \hbox{ if }x\in \Omega(e)\cap B_R\\
 -\left(\frac{(\Phi_{-e},\Phi_1)_{L^2(\Omega(-e))}}{(\Phi_{e},\Phi_1)_{L^2(\Omega(e))}}\right)^{\frac 12}\Phi_{-e}(x) & \hbox{ if }x\in \Omega(-e)\cap B_R
\end{array}
\right.$$
where $(-,-)_{L^2(D)}$ denotes the scalar product in $L^2(D,\R^m)$.
The mapping $e\mapsto \Psi_e$  is a continuous odd function from $S^{N-1}$ to $H^1_0(\Omega, \R^m)$ and by construction $(\Psi_e,\Phi_1)_{L^2(\Omega\cap B_R)}=0$. Therefore the mapping
$h:S^{N-1}\to \R^{j-1}$ defined by
$$h(e)=\left( (\Psi_e, \Phi_2)_{L^2(\Omega \cap B_R)},\dots,(\Psi_e, \Phi_j)_{L^2(\Omega \cap B_R)}\right)$$
is an odd continuous mapping, and since
$j-1<N$ by the Borsuk-
Ulam Theorem it must have a zero. This means that there exists a
direction $e\in S^{N-1}$ such that $\Psi_e$ is orthogonal to all the eigenfunctions
$\Phi_1,\dots,\Phi_j$ in $L^2(\Omega \cap B_R, \R^m)$. Since
$\mu(U)=j$ this implies that $\frac{Q_U(\Psi_e, \Omega\cap B_R)}{( \Psi_e,\Psi_e) _ {L^2(\Omega \cap B_R)}   }\geq \l_{j+1}^s(L_U, \Omega\cap B_R)\geq 0$ against the fact that
\begin{align*}
Q_U(\Psi_e, \Omega\cap B_R)=& \left(\frac{(\Phi_{-e},\Phi_1)_{L^2(\Omega(-e))}}{(\Phi_{e},\Phi_1)_{L^2(\Omega(e))}}\right) \l_1^s\left(L_U, \Omega(e)\cap B_R\right)\\
+&\left(\frac{(\Phi_{-e},\Phi_1)_{L^2(\Omega(-e))}}{(\Phi_{e},\Phi_1)_{L^2(\Omega(e))}}\right)\l_1^s\left( L_U,\Omega(-e)\cap B_R \right)<0
\end{align*}
by construction. The contradiction proves the assertion.
\end{proof}

\noindent Now we are in position to prove Theorem \ref{tfconvessa}.
\begin{proof}[\bf{Proof of Theorem \ref{tfconvessa}}]
By Proposition \ref{lf} we have that there exists a direction $e \in  S^{N-1}$ such that \eqref{lll} holds. Hypothesis i) of Theorem \ref{tfconvessa} implies that the system \eqref{1} is fully coupled along $U$ also in $\Omega$ and so, 
if $W^e \equiv 0$  in $\Omega (e)$, we immediately get the foliated Schwarz symmetry of $U$, by Proposition \ref{pf}.\\
If instead $W^e\not\equiv 0$ we show that $W^e$ is eather strictly positive or strictly negative in $\Omega(e)$ so that, by Proposition \ref{rotating-plane} we again get the assertion.\\
From \eqref{lll} we have that  $P_U(\psi,\phi,\Omega(e))$ is a semidefinite scalar product on $C^1_c(\Omega(e),\R^m)$.
Consequently, using the  Cauchy-Schwarz-inequality
we obtain:
\begin{equation}\label{f5}
\left(
P_U (v_R, \phi, \Omega(e))\right)^2\leq  Q_U (v_R,\Omega(e) )Q_U (\phi,\Omega(e) )
\end{equation}
for any $ \phi \in  C^1_c (\Omega(e), \R^m )$,
where $v_R:=(W^e)^+\chi_{\Omega(e)}\xi_R$ and $\xi_R$ is a cut-off function as defined in \eqref{eq:def-xi_R}. By Lemma \ref{l2.2} we have $\limsup_{R\to +\infty}Q_U (v_R, \Omega(e))\leq 0$. Combining this with \eqref{f5} and passing to the limit as $R\to +\infty$ we have
\begin{align*}
\int_{\Omega(e)}\na (W^e)^+ \cdot \na \phi-\frac 12\sum_{i,j=1}^m \bigg(\frac{\de f_i}{\de u_j}(|x|,&U)+\frac{\de f_j}{\de u_i}(|x|,U)\bigg)w_j^+ \phi_i \, dx\\
&=\lim_{R\to +\infty}P_U (v_R, \phi, \Omega(e) )=0.
\end{align*}
Since $\phi \in C_c^{1} (\Omega (e), \R^m)$  was chosen arbitrarily, we conclude that $(W^e)^+$ is a solution of the system
\begin{equation}\label{eq-inf}
-\Delta w_i^+-\frac 12 \sum_{j=1}^m \bigg(\frac{\de f_i}{\de u_j}(|x|,U)+\frac{\de f_j}{\de u_i}(|x|,U)\bigg)w_j^+=0\quad \text{ in }\Omega(e )
\end{equation}
for $i=1,\dots, m$.
Now, since  $(W^e)^+\geq 0$ in $\Omega(e )$ and the linear system \eqref{eq-inf} is fully coupled in $\Omega(e)$, the Strong Maximum Principle implies that either $(W^e)^+\equiv 0$ or $ (W^e) ^+ >0$ in $\Omega(e)$. In any case $W^e$ is strictly positive or strictly negative in $\Omega(e)$. 
\end{proof}
\begin{proposition}\label{p8}
Let $U$ be a solution of \eqref{1} and \eqref{2} with Morse index $\mu(U)=j\leq N-1$ and assume that the system \eqref{1} is fully coupled along $U$ in $\Omega$. Then there exists a direction $e\in S^{N-1}$ such that
\begin{equation}\label{8}
Q_{es}(\psi,\Omega(e)):=\int_{\Omega(e)}|\na \psi|^2 +\sum_{i,j=1}^m b_{ij}^{es}(x)\psi^i\psi^j\, dx \geq 0
\end{equation}
for any $\psi \in C^1_c(\Omega(e),\R^m)$.
\end{proposition}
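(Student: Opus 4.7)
The approach parallels Proposition~\ref{lf}, with the main twist that we use as test functions odd extensions of first eigenfunctions of the $\sigma_e$-symmetric operator $L_{es}$ instead of the odd-looking composites of first eigenfunctions of the non $\sigma_e$-symmetric linearized operator used there. The sharper Morse-index bound $\mu(U)\leq N-1$ compensates for no longer saving one dimension via the $\Phi_1$-adjustment exploited in Proposition~\ref{lf}.

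Argue by contradiction and assume that $\inf_{\psi\in C^1_c(\Omega(e),\R^m)}Q_{es}(\psi,\Omega(e))<0$ for every $e\in S^{N-1}$. By Remark~\ref{lfa} the linear system \eqref{equation-b-es} is fully coupled in $\Omega(e)\cap B_R$ for every $e$ and every $R$ larger than some $\bar R>0$. A verbatim repetition of the proof of Lemma~\ref{ln2}, now keeping track of the continuous dependence of the coefficients $b^{e,s}_{ij}$ on $e$ and using compactness of $S^{N-1}$, produces $\tilde R\geq \bar R$ such that $\l_1^s(L_{es},\Omega(e)\cap B_R)<0$ for every $e\in S^{N-1}$ and every $R\geq \tilde R$. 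Let $\Phi_{e,s}$ denote the positive, $L^2$-normalized first symmetric eigenfunction of $L_{es}$ in $\Omega(e)\cap B_R$; by the full coupling and Theorem~\ref{SMP} the eigenvalue $\l_1^s$ is simple, so $\Phi_{e,s}$ is unique.

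For each $e$ extend $\Phi_{e,s}$ oddly through the hyperplane $T(e)$ to an element $\tilde\Phi_{e,s}\in H^1_0(\Omega\cap B_R,\R^m)$. Because $b^{e,s}_{ij}(x)$ is symmetric under $\sigma_e$, the same change of variables used in the $\ast\ast$-case of Proposition~\ref{rotating-plane} gives
\begin{equation}\nonumber
Q_U(\tilde\Phi_{e,s},\Omega\cap B_R)=2\,Q_{es}(\Phi_{e,s},\Omega(e)\cap B_R)=2\,\l_1^s(L_{es},\Omega(e)\cap B_R)<0.
\end{equation}
Moreover $\Phi_{-e,s}$ and $\Phi_{e,s}\circ\sigma_e$ are both positive first eigenfunctions of the same operator on $\Omega(-e)\cap B_R$ (since $\sigma_{-e}=\sigma_e$ and $B^{-e,s}=B^{e,s}$), so by simplicity they coincide after normalization. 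A routine check then yields $\tilde\Phi_{-e,s}=-\tilde\Phi_{e,s}$, while the continuity of $e\mapsto \tilde\Phi_{e,s}$ in $H^1_0(\Omega\cap B_R,\R^m)$ is inherited from the continuous dependence of the simple eigenvalue and eigenfunction on domain and coefficients.

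Enlarge $R$ further so that $\Omega\cap B_R$ contains the supports of a subspace of dimension $j=\mu(U)\leq N-1$ on which $Q_U$ is negative definite; then in $\Omega\cap B_R$ the linearized operator $L_U$ has exactly $j$ negative symmetric Dirichlet eigenvalues with mutually orthogonal eigenfunctions $\Phi_1,\dots,\Phi_j$, and $\l_{j+1}^s(L_U,\Omega\cap B_R)\geq 0$. The continuous odd map
\begin{equation}\nonumber
h:S^{N-1}\to \R^{j},\qquad h(e)=\bigl((\tilde\Phi_{e,s},\Phi_1)_{L^2(\Omega\cap B_R)},\dots,(\tilde\Phi_{e,s},\Phi_j)_{L^2(\Omega\cap B_R)}\bigr)
\end{equation}
has a zero $e_0$ by Borsuk--Ulam, since $j\leq N-1$. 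At $e_0$ the function $\tilde\Phi_{e_0,s}$ is $L^2$-orthogonal to all of $\Phi_1,\dots,\Phi_j$, so the variational characterization of the symmetric eigenvalues gives $Q_U(\tilde\Phi_{e_0,s},\Omega\cap B_R)\geq \l_{j+1}^s\,\|\tilde\Phi_{e_0,s}\|_{L^2}^2\geq 0$, contradicting the negativity obtained above.

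The delicate step is the uniform-in-$e$ bound on $\l_1^s(L_{es},\Omega(e)\cap B_R)$: Lemma~\ref{ln2} is stated for a fixed potential, while here both the coefficient matrix $B^{e,s}$ and the half-space $\Omega(e)$ vary with $e$. Uniformity is recovered by compactness of $S^{N-1}$ together with continuous dependence of $b^{e,s}$ on $e$ (itself a consequence of the continuity of $U$ and of $J_F$). Once this is available, the remainder is a clean Borsuk--Ulam argument which exploits the extra dimension made available by the stronger Morse-index assumption $\mu(U)\leq N-1$.
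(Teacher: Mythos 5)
Your proof is correct and follows essentially the same route as the paper: contradiction, a uniform-in-$e$ bound $\l_1^s(L_{es},\Omega(e)\cap B_R)<0$ via the argument of Lemma~\ref{ln2}, odd extension of the first symmetric eigenfunction of $L_{es}$, Borsuk--Ulam on $S^{N-1}\to\R^j$ (using $j\leq N-1$), and the identity $Q_U(\tilde\Phi_{e,s},\Omega\cap B_R)=2Q_{es}(\Phi_{e,s},\Omega(e)\cap B_R)$. The only differences are cosmetic — you spell out the oddness check $\tilde\Phi_{-e,s}=-\tilde\Phi_{e,s}$ and flag explicitly that the $e$-dependence of $B^{e,s}$ requires the Lemma~\ref{ln2} argument to be revisited (rather than cited as a black box) — both are points the paper treats more tersely.
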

\begin{proof}
Assume, arguing by contradiction, that for any $e\in S^{N-1}$ \eqref{8} does not hold. Then we can apply Lemma \ref{ln2} to the linear operator $L^{es}$, defined in \eqref{equation-b-es}, and we can find $\tilde R>0$ such that, for any $R\geq \tilde R$ and for any $e\in S^{N-1}$the first symmetric eigenvalue $\l_1^s(L^{es}, \Omega(e)\cap B_R)$ of the linear operator $L^{es}$ in $\Omega(e)\cap B_R$ with Dirichlet boundary conditions is negative. \\
By definition $j$ is the maximal dimension of a subspace  $X_j:=span<\Psi_1,\dots,\Psi_j>\subset C^1_c(\Omega, \R^m)$ such that $Q_U(\psi,\Omega)<0$ for any  $\psi \in X_j\setminus\{0\}$.
We take a ball $B_{\rho}$ with radius $\rho>0$ sufficiently large to contain the supports of all $\Psi_i$, $i=1,\dots,j$. For $R\geq \rho$, in the domain $\Omega\cap B_R$ the linearized operator $L_U$ has exactly $j$ negative symmetric eigenvalues  and $\l_{j+1}^s(L_U, B_{R}\cap \Omega)\geq 0$.\\
We take  $R\geq\max\{\rho,\tilde R, \bar R\}$, where $\bar R$ is as defined in Remark \ref{lfa}.
In this way we have that the linear system $-\Delta +B^{es}(x)$ defined in \eqref{equation-b-es}, is fully coupled in $\Omega(e)\cap B_R$ for any $e\in S^{N-1}$ and the same holds for the symmetric system associated with the linear operator $L^{es}$ in $\Omega(e)\cap B_R$. Moreover the system \eqref{1} is fully coupled along $U$ in $\Omega\cap B_R$.\\
We denote by $g_e$ the first positive $L^2$-normalized eigenfunction corresponding to $\l_1^s(L^{es}, \Omega(e)\cap B_R)$ (which is defined since the system $-\Delta V+B^{es}(x)V$ is fully coupled in  $\Omega(e)\cap B_R$ for any $e\in S^{N-1}$) and by $\tilde g_e$ the odd extension to $\Omega \cap B_R$. Let $\Phi_1,\dots,\Phi_j$ be the mutually orthogonal eigenfunctions corresponding to the $j$ negative symmetric eigenvalues of $L_U$ in $\Omega \cap B_R$.
Let $h: S^{N-1}\ra R^j$ be defined by
$$h(e):=\left((\tilde{g}_e,\Phi_1)_{L^2(\Omega \cap B_R)},\dots,(\tilde{g}_e,\Phi_j)_{L^2(\Omega \cap B_R)}\right)$$
where $(-,-)_{L^2(D)}$ is, as before, the usual scalar product in $L^2(D,\R^m)$.
$h$ is an odd and continuous mapping and, since $j<N-1$ it must have a zero by the Borsuk-Ulam Theorem. This means that there exists a direction $e\in S^{N-1}$ such that $\tilde g_e$ is orthogonal to all the eigenfunctions $\Phi_1, \dots, \Phi_j$ in  $L^2(\Omega\cap B_R,\R^m)$. Since $\mu(U)=j$ this implies that
$$\frac{Q_U(\tilde g_e, \Omega\cap B_R)}{(\tilde g_e,\tilde g_e)_{L^2(\Omega \cap B_R)}}=\frac{\int_{\Om\cap B_R}|\na \tilde g_e|^2 -\sum_{i,j=1}^m \frac{\de f_i}{\de u_j}(|x|,U)\tilde g_e^i\tilde g_e^j\, dx} {(\tilde g_e,\tilde g_e)_{L^2(\Omega \cap B_R)}}       \geq \l_{j+1}^s(L_U, \Omega\cap B_R)\geq 0.$$
The symmetry properties of $\tilde g_e$ imply that
\begin{align*}
\int_{\Om}|\na \tilde g_e|^2 &-\sum_{i,j=1}^m \frac{\de f_i}{\de u_j}(|x|,U)\tilde g_e^i\tilde g_e^j\, dx=2\int_{\Om(e)\cap B_R}|\na g_e|^2 +\sum_{i,j=1}^m b_{ij}^{es}(x)g_e^i g_e^j\, dx\\
&=2\l_1^{s}( L^{es}, \Omega(e) \cap B_R)<0.
\end{align*}
The contradiction proves the assertion.
\end{proof}

\begin{proof}[\bf{Proof of Theorem \ref{tf'convessa}}]
From Proposition \ref{p8} we have a direction $e \in S^{N-1}$ such that $Q_{es}(\psi, \Omega(e))\geq 0$ for any $\psi \in C_c^1(\Omega(e), \R^m)$. 
If $W^e\equiv 0$, i.e. if $U$ is symmetric with respect to $T(e)$, then  $Q_{es}(\psi,\Omega(e))=Q_U(\psi,\Omega(e))\geq 0$ for any $\psi \in C^1_c(\Omega(e),\R^m)$. The foliated Schwarz symmetry of $U$ then follows from  Proposition \ref{pf}.\\
So assume $W^e\not\equiv 0$.
By assumption the bilinear form $P_{es}$, defined in \eqref{P-es}, defines 
a scalar product on $ C^1_c(\Omega(e),\R^m)$, so, 
for $v_R=(W^e)^+\xi_R\in H^1_0(\Omega(e),\R^m)$
and for any 
$\phi \in C^1_c(\Omega(e),\R^m)$, we have
\begin{equation}\label{**}
0\leq \Big(P_{es}(v_R,\phi,\Omega(e))\Big)^2 \leq Q_{es}(v_R,\Omega(e))Q_{es}(\phi,\Omega(e)).
\end{equation}
Moreover, using \eqref{Q_es<0} with $D=\Omega(e)$, we have
$$\limsup_{R\to +\infty}Q_{es}(v_R,\Omega(e))\leq 0.$$
Passing to the limit in \eqref{**} we get
$$P_{es}(\big(W^e\big)^+,\phi,\Omega(e))=0$$
for any $\phi \in C^1_c(\Omega(e),\R^m)$, so that $\big(W^e\big)^+$ is a weak solution of
\begin{equation}\label{sist-inf}
-\Delta(w_i)^+ +\frac 12 \sum_{j=1}^m (b^{es}_{ij}(x)+b^{es}_{ji}(x))(w_j)^+=0\quad \text{ in } \Omega(e)
\end{equation}
for $i=1,\dots,m$.
Since the system is fully coupled in $\Omega(e)$, the strong maximum principle implies that either $\big(W^e\big)^+>0$ in $\Omega(e)$ or $\big(W^e\big)^+\equiv 0$ in $\Omega(e)$. In any case $W^e$ is strictly positive or strictly negative  in $\Omega(e)$
and the foliated Schwarz symmetry  of $U$ follows from Proposition \ref{rotating-plane}.
\end{proof}
\section{\textbf{Other results}}
\label{se:5}
\noindent We prove the other theorems stated in Section \ref{se:1}.
\begin{proof}[Proof of Theorem \ref{corollario1}]
If  $U$ is a Morse index one solution for any direction $e\in S^{N-1}$  at least one among $Q_U(-,\Omega(e))$ and $Q_U(-,\Omega(-e))$ should be positive semidefinite, otherwise we would obtain a $2$-dimensional subspace of $ C^1_c(\Omega, \R^m)$ where the quadratic form $Q_U(-,\Omega)$ is negative defined, contradicting the definition of Morse index $1$.\\
By the  proof of Theorem \ref{tfconvessa} and of Theorem \ref{tf'convessa} we can find a direction $e\in S^{N-1}$ such that $W^e\equiv 0$ in $\Omega(e)$ or $W^e>0$ in $\Omega(e)$. In the second case, applying Proposition \ref{rotating-plane} we can find a direction $e'$ such that $W^{e'}\equiv 0$ in $\Omega(e')$. So, in any case, there exists a direction $e$ such that $U$ is symmetric with respect to  the hyperplane $T(e)$.
Thus, by symmetry, $Q_U(\psi,\Omega(e))=Q_U(\psi,\Omega(-e))$ for any $\psi\in C^1_c(\Omega(e),\R^m)$ and $Q_U$ is positive semidefinite in $\Omega(e)$.\\
After a rotation, we may assume that $e=e_2=(0,1,\dots,0)$ so that $T(e)\,=\,\{x\in \R^N\,:\,x_2=0\}$
and we may introduce new (cylinder) coordinates
$(r, \theta, y_3, \dots ,y_N)$ defined by the relations $x= r [\cos \theta e_1 + \sin \theta e_2] + \sum_{i=3}^N y_i e_i $.   \\
Then the angular derivative $U_{\theta} $ of $U$ with respect to $\theta $, extended by zero at  the origin if $\Omega $ is a ball, satisfies the linearized system, i.e.
\begin{equation}\label{u-theta-+}
- \Delta U _{\theta} - J_F (|x|, U) U_{\theta} =0 \quad \text{ in } \Omega(e_2)
\end{equation}
and it is zero on the boundary.
Reasoning exactly as in the proof of Proposition \ref{pf} we have that $U_{\theta}^+$ is a solution of
\begin{equation}\label{simm-u-theta-+}
-\Delta U_{\theta}^+- \frac 12 \left(J_F (|x|, U) +J_F ^t(|x|, U)\right) U_{\theta}^+ =0 \quad \text{ in } \Omega(e_2)
\end{equation}
and this implies that $U_{\theta}$ does not change sign in $\Omega(e_2)$. We can assume that $U_{\theta}=U_{\theta}^+$, and that $U_{\theta}^+$ is a solution of the systems  \eqref{u-theta-+} and \eqref{simm-u-theta-+}. Then
$J_F (|x|, U) U_{\theta}= \frac 12 \left (J_F (|x|, U) + J_F ^t (|x|, U) \right ) U_{\theta}$, i.e.
  \eqref{superfullycoupling1} and if $m=2$, since $U_{\theta}$ is positive, we get    \eqref{superfullycoupling2}. The case $U_{\theta}=U_{\theta}^-$ can be handled in the same way. This proves the assertion if the hypothesis $a)$ holds.\\
To prove the theorem under assumption $b)$, we observe that the result follows if we can find a direction $e$ such that $W^e\equiv 0$ in $\Omega(e)$ and $Q_U(\psi,\Omega(e))\geq 0$ for any $\psi\in C^1_c(\Omega(e),\R^m)$.\\
Following the proof of Theorem \ref{tf'convessa} we have a direction $e\in S^{N-1}$ such that either $W^e\equiv 0$ and $Q_U(\psi,\Omega(e))\geq 0$ or $W^e>0$ in $\Omega(e)$.\\
The second case cannot happen. Indeed
the function $W^e$ satisfies the system
\begin{equation}\label{sist-diff}
-\Delta w_i+ \sum_{j=1}^m b_{ij}^e(x)w_j=0\quad \text{ in }\Omega(e )
\end{equation}
where $b_{ij}^e(x)$ are as in Lemma \ref{l-differenza}. Multiplying the $i$-th equation of \eqref{sist-inf} and \eqref{sist-diff} for $w_i\xi_R$ ($\xi_R$ is the usual cutoff function, see \eqref{eq:def-xi_R}), integrating in  $\Omega(e)$, summing on $i$ and subtracting, we get
$$\sum_{i,j=1}^m\int_{\Omega(e)}\left( b_{ij}^e(x)-\frac 12( b^{es}_{ij}(x)+b^{es}_{ji}(x))\right) w_jw_i\xi_R\, dx=0$$
equivalently
\begin{equation}\label{fin2}
\sum_{i,j=1}^m\int_{\Omega(e)}\left( b_{ij}^e(x)-b_{ij}^{es}(x)\right) w_jw_i\xi_R\, dx=0.
\end{equation}
Since $W^e>0$, Lemma \ref{l-differenza} implies that $b_{ij}^e(x)\geq b_{ij}^{es}(x)$ for any $i,j=1,\dots,m$, and since $w_iw_j\xi_R>0$ in $\Omega(e)$, relation \eqref{fin2} gives
$b_{ij}^e(x)= b_{ij}^{es}(x)$ for any $i,j=1,\dots,m$.\\
By the strict convexity of $\frac{\de f_{i_0}}{\de u_{j_0}}$ we have that, if $W^e>0$ then $b_{i_0,j_0}^e(x)>b_{i_0,j_0}^{es}(x) $ getting a contradiction. Therefore the only possible case is $W^e\equiv 0$ in $\Omega(e)$ and $Q_U(\psi,\Omega(e))\geq 0$. Then the result follows as in the previous case.
\end{proof}
\begin{proof}[Proof of Theorem \ref{soluzioni-stabili}]
We choose, as before, the cylindrical coordinates with respect to the plane $x_1x_2$, i.e. $(r,\theta,\tilde x)$. 
Again the derivative 
$U_{\theta}$  satisfies the linearized system 
$-\Delta U_{\theta}-J_F(|x|,U)U_{\theta}=0$  in $\Omega$
and, in the case $\Omega=\R^N\setminus B_R(0)$, also the boundary conditions
$U_{\theta}=0$ on $\de \Omega.$
By the stability  assumption, we can proceed as in the proof of Proposition \ref{pf}, with $\Omega$ in place of $\Omega(e)$, to show that $U_{\theta}$ does not change sign in $\Omega$.
Since $U_{\theta}$ is $2\pi$-periodic this is impossible and therefore $U_{\theta}\equiv 0$.
By the arbitrarily of $x_1,x_2$ we conclude that $U$ is radial.\\
Moreover, if $\Omega=\R^N$ and $F$ does not depend on $|x|$, then for every $t$ the translated function $U(x+t)$ is also a stable solution of \eqref{1}, therefore it is radial by the argument above. This however is not possible unless $U$ is constant.
\end{proof}
\begin{proof}[Proof of Theorem \ref{tnonesistenza1}]
Suppose by contradiction that \eqref{1} admits a sign
changing solution
$U$ on $\R^N$ that satisfies the assumptions of Theorem \ref{corollario1} and such that $\lim \limits_{|x| \to \infty}U(x)=0$. Since we can apply
Theorems \ref{tfconvessa} and \ref{tf'convessa} then $U$ is
foliated Schwarz symmetric. By a rotation of coordinates, we
may assume that $p=e_N$ in the definition of foliated Schwarz
symmetry, so that $U$ is axially symmetric with respect to the axis
$\R e_N$ and nonincreasing in the angle $\theta= \arccos
\frac{x_N}{|x|}$. By the proofs of Theorem \ref{corollario1} we get a direction
$e\in S^{N-1}$ such that $U$ is symmetric with respect to
$T(e)$ and
\begin{equation}\label{4.1}
\inf _{\psi\in C^1_c(\Omega(e))}Q_U(\psi,\Omega(e))\geq 0.
\end{equation}
We may assume that $e=e_1$ in \eqref{4.1}. Indeed, this is clearly
possible if $U$ is radial. Moreover, if $U$ is nonradial, then $U$ is
strictly decreasing in the angle $\theta$, therefore the symmetry
hyperplanes of $U$ are precisely the ones containing $e_N$, and for
each one of them the infimum in \eqref{4.1} takes the
same value.\\
We now consider the derivative $\frac{\de U}{\de x_1}$ which, by regularity theory, (see \cite{GPW} Lemma 6.1)  belongs to $H^2(\Omega(e_1))$ and satisfies the linearized system 
\begin{equation}\label{4.2}
-\Delta \left(\frac{\de U}{\de x_1}\right) -J_F(U(x))\left(\frac{\de U}{\de x_1}\right)=0 \quad\hbox{
  in } \Omega (e_1).
\end{equation}
Moreover, because of the symmetry with respect to $T(e_1)$ we have
$$
\frac{\de U}{\de x_1}=0\quad \hbox{ on } \de \Omega(e_1) 
$$
so that $\frac{\de U}{\de x_1}\in H^1_0(\Omega(e_1))$.
From \eqref{4.1} we have that $P_U$ defines a (semidefinite) scalar product on $C^1_c(\Omega (e_1),\R^m)$ and the corresponding Cauchy-Schwarz-inequality reads
\begin{equation}\label{4.3}
\left(P_U(v_R,\phi,\Omega(e_1))\right)^2\leq Q_U(\phi,\Omega(e_1))\limsup_{R\to +\infty}Q_U(v_R,\Omega(e_1))
\end{equation}
if $v_R=\left(\frac{\de U}{\de x_1}\right)^+ \xi_R$, with $\xi_R$ being the usual cutoff function. 
Reasoning exactly as in the proof of Proposition \ref{pf}, with $\left(\frac{\de U}{\de x_1}\right)^+ $instead of $U_{\theta}^+$, we get 
$$Q_U(v_R,\Omega(e_1))\leq \int_{\Omega(e_1)}\Big| \left(\frac{\de U}{\de x_1}\right)^+\Big|^2|\na \xi_R|^2 \, dx\leq \frac 1{R^2}\int_{B_{2R}\setminus B_R}|\na U|^2\,dx$$
so that
$$\limsup_{R\to +\infty} Q_U(v_R,\Omega(e_1))\leq 0.$$
Then \eqref{4.3} implies, as before, that $\left(\frac{\de U}{\de x_1}\right)^+$ is a weak solution of the system
\begin{equation}\label{4.4}
-\Delta \left(\frac{\de U}{\de x_1}\right)^+-\frac 12 \left(J_F(U(x))+J_F^t(U(x))\right)\left(\frac{\de U}{\de x_1}\right)^+=0\quad \text{ in }\Omega(e_1).
\end{equation}
Since this system is fully coupled in $\Omega(e_1)$ the Strong Maximum principle  implies that either  $\left(\frac{\de U}{\de
  x_1}
\right)^+\equiv 0$ in $\Omega(e_1)$ or $\frac{\de U}{\de x_1}>0$ in
$\Omega(e_1)$. In any case $\frac{\de U}{\de x_1}$ does not change
sign in  $\Omega(e_1)$ which is impossible because $U$ changes sign in
$\Omega(e_1)$ and $\lim \limits_{|x| \to \infty}U(x)=0$.
This contradiction proves the assertion.

\end{proof}
\begin{proof}[Proof of Theorem \ref{tnonesistenza2}]
Suppose by contradiction that \eqref{1} and \eqref{2} admit a positive solution
$U$ in $\R^N\setminus B$ that satisfies  the assumptions of Theorem \ref{corollario1} and such that $\lim \limits_{|x| \to \infty}U(x)=0$.
As in the proof of Theorem \ref{tnonesistenza1}
we may assume that $U$ is symmetric with respect to $T(e_1)$ and that \eqref{4.1} holds for $e=e_1$. Then the derivative $\frac{\de U}{\de x_1}$ satisfies the system
\begin{equation}\label{ult}
\left\{
\begin{array}{ll}
-\Delta \left(\frac{\de U}{\de x_1}\right) -J_F(U(x))\left(\frac{\de U}{\de x_1}\right)=0 &\hbox{
  in } \Omega (e_1)\\
\frac{\de U}{\de x_1}=0 & \hbox{
  on } T (e_1)\cap \overline{\Omega}\\
\frac{\de U}{\de x_1}\geq 0 & \hbox{
  on } \de B\cap \overline{\Omega(e_1)}\\
\end{array}
\right.
\end{equation}
Multiplying  by $\left(\frac{\de U}{\de x_1}\right)^- \xi_R^2$, integrating over $\Omega(e_1)$ and using the cooperativeness of \eqref{1},  we get
$$-\sum_{i=1}^m \int_{\Omega(-e_1)} \na \left(\frac{\de u_i}{\de x_1}\right)\cdot \na \left[\left(\frac{\de u_i}{\de x_1}\right)^- \xi_R^2\right]\, dx\leq \sum_{i,j=1}^m\int_{\Omega(-e_1)}\frac{\de f_i}{\de u_j}(U(x))\left(\frac{\de u_j}{\de x_1}\right)^-\left(\frac {\de u_i}{\de x_1}\right)^-\xi_R^2\, dx.$$
Then, as in the proof of the previous theorem, we get that $(\frac{\de U}{\de x_1})^-$ is a solution of the symmetric system \eqref{ult} and hence $ (\frac{\de U}{\de x_1})^-\equiv 0$ by the boundary conditions. Then $\frac{\de U}{\de x_1}\geq 0$ in $\Omega(e_1)$ contradicting the fact that $U=0$ on $\de B$ and $U(x)\to 0$ as $|x|\to +\infty$.
\end{proof}
\begin{proof}[Proof of Theorem \ref{tnonesistenza3}]
The proof follows as in the case of Theorems \ref{tnonesistenza1} and \ref{tnonesistenza2} 
once we get, as in the proof of Theorem \ref{corollario1}, the existence of a direction $e\in S^{N-1}$ such that $U$ is symmetric with respect to $T(e)$ and that \eqref{4.1} holds.\\
To get \eqref{4.1} we need the following fact: 
if we have a direction $e\in S^{N-1}$ such that $W^e$ is either strictly positive or strictly negative in $\Omega(e)$ and the system is of gradient type, then
\begin{equation}\label{ults}
Q_e(\psi,\Omega(e))\geq 0\quad \text{for any }\psi \in C^1_c(\Omega(e),\R^m).
\end{equation}
This fact is a generalization of Lemma 2.1 in \cite{GPW} and follows in a similar way.\\
Now, starting from the proof of Theorem \ref{tfconvessa} (or of Theorem \ref{tf'convessa}) we get a direction $e\in S^{N-1}$  such that $Q_U(\psi,\Omega(e))\geq 0$  ($Q_{es}(\psi,\Omega(e))\geq 0$ respectively) for any $\psi \in C^1_c(\Omega(e),\R^m)$. If $W^e\equiv 0$ in $\Omega(e)$ then  \eqref{4.1} is satisfied ($Q_{es}(\psi,\Omega(e))=Q_{U}(\psi,\Omega(e))$, by the symmetry) and we are done. If, else, $W^e\not\equiv 0$ we have, as in the proof of  Theorem \ref{tfconvessa} (Theorem \ref{tf'convessa} respectively) that  $W^e$ is either strictly positive or strictly negative in $\Omega(e)$.\\
Then, applying the rotating plane method, see Proposition \ref{rotating-plane}, we get, using the same notations, the existence of $\tilde \beta>0$ such that $W^{\tilde{\beta}}\equiv 0$ in $\Omega(\tilde \beta)$ and $W^{\beta}<0$ in $\Omega(\beta)$ for any $\beta\in [0,\tilde \beta)$. This means, using \eqref{ults}, that $Q_e(\psi,\Omega(\beta))\geq 0$ for any $\beta\in [0,\tilde \beta)$ and, passing to the limit,  $Q_e(\psi,\Omega(\tilde{\beta}))\geq 0$. The symmetry of $U$ with respect to $T(\tilde \beta)$ then implies that
$Q_e(\psi,\Omega(\tilde{\beta}))=Q_U(\psi,\Omega(\tilde{\beta}))\geq 0$ and \eqref{4.1} is satisfied concluding the proof.
\end{proof}


\begin{thebibliography}{99}
\bibitem{Am} H. Amann,  \emph {Maximum principles and principal eigenvalues},
Ten mathematical essays on approximation in analysis and topology, 
Elsevier B. V.,  Amsterdam, 2005
\bibitem{BS} J. Busca, B. Sirakov {\em Symmetry results for semilinear elliptic systems in the whole space}, J. Differential Equations 163 (2000), no. 1, 41-56. 
\bibitem{DAPA} L. Damascelli, F. Pacella, \emph{Symmetry results for cooperative elliptic systems via linearization},  
SIAM J. Math. Anal. 45 (2013), no. 3, 1003-1026  
\bibitem{DGP} L. Damascelli, F. Gladiali,  F. Pacella {\em A symmetry results for semilinear cooperative elliptic systems}, Recent trends in Nonlinear P.D.E., AMS -Contemporary Mathematics Series (in press), arXiv:1209.5581.
\bibitem{deFY} D.G. de Figueiredo, J. Yang Nonlinear Anal. 33 (1998), no. 3, 211–234.  \emph{Decay, symmetry and existence of solutions of semilinear elliptic systems}, Nonlinear Anal. 33 (1998), no. 3, 211-234.
\bibitem{deF2} D.G. de Figueiredo \emph{Semilinear Elliptic Systems: existence, multiplicity, symmetry of solutions},
Handbook of Differential Equations: stationary partial differential equations.  Vol. V  Eq.  2008, 1-48.
\bibitem{deFM} D.G. de Figueiredo, E. Mitidieri \emph{Maximum principles for linear elliptic systems},  Rend. Inst. Mat. Univ. Trieste, 1992, pp. 36-66.
\bibitem{F} A. Farina {\em On the classification of solutions of the Lane-Emden equation on unbounded domains of $\R^ N$}, J. Math. Pures Appl. (9) 87 (5), 2007, pp. 537-561.
\bibitem{FG} M. Fazly, N. Ghoussoub {\em On the H\'enon-Lane-Emden conjecture}, arXiv:1107.5611v2.
\bibitem{GPW} F. Gladiali, F. Pacella, T. Weth {\em Symmetry and nonexistence of low Morse index solutions in unbounded domains}, J. Math. Pures Appl. (9) 93 (5), 2010, pp. 536-558.
\bibitem{MMP} L. A. Maia, E. Montefusco, B. Pellacci \emph{Positive solutions for a weakly coupled nonlinear Schrodinger system}, J. Differential Equations 229, 2006, pp. 743-767.
\bibitem{PA} F. Pacella {\em  Symmetry results for solutions of
  semilinear elliptic equations with convex nonlinearities},   J. Funct. Anal.  192, 2002, pp. 271-282.
\bibitem{PR} F. Pacella, M. Ramaswamy {\em  Symmetry of solutions of elliptic equations via maximum principles}, Handb. Differ. Equ.: stationary partial differential equations. Vol. VI, 269-312, Elsevier/North-Holland, Amsterdam, 2008.
\bibitem{PAW} F. Pacella, T. Weth, {\em Symmetry of solutions to semilinear elliptic equations via Morse index}, Proc. Amer. Math. Soc. 135 (6), 2007, pp. 1753-1762.
\bibitem{SW}  A. Saldana, T. Weth, \emph{Asymptotic axial symmetry of solutions of parabolic equations in bounded radial domains}, J. Evol. Equ. 12 (3), 2012, pp. 697-712.
\bibitem{Si} B. Sirakov, \emph{Some estimates and maximum principles for weakley coupled systems of elliptic PDE}, Nonlinear Analysis 70 (8) , 2009, pp. 3039-3046.
\bibitem{W}  T. Weth, \emph{Symmetry of solutions to variational problems for nonlinear elliptic equations via reflection methods}, Jahresber. Dtsch. Math.-Ver. 112 (3), 2010, pp 119-158.


\end{thebibliography}
\end{document}